\documentclass[a4paper,11pt,reqno]{article}

\usepackage[utf8]{inputenc}
\usepackage[T1]{fontenc}
\usepackage{lmodern}
\usepackage[english]{babel}
\usepackage{microtype}

\usepackage{amsmath,amssymb,amsfonts,amsthm}
\usepackage{mathtools,accents}
\usepackage{mathrsfs}
\usepackage{aliascnt}
\usepackage{braket}
\usepackage{bm}

\usepackage[a4paper,margin=3cm]{geometry}
\usepackage[citecolor=blue,colorlinks]{hyperref}

\usepackage{enumerate}
\usepackage{xcolor}

\makeatletter
\g@addto@macro\@floatboxreset\centering
\makeatother

\makeatletter
\def\newaliasedtheorem#1[#2]#3{
  \newaliascnt{#1@alt}{#2}
  \newtheorem{#1}[#1@alt]{#3}
  \expandafter\newcommand\csname #1@altname\endcsname{#3}
}
\makeatother

\numberwithin{equation}{section}

\newtheoremstyle{slanted}{\topsep}{\topsep}{\slshape}{}{\bfseries}{.}{.5em}{}

\theoremstyle{plain}
\newtheorem{theorem}{Theorem}[section]
\newaliasedtheorem{proposition}[theorem]{Proposition}
\newaliasedtheorem{lemma}[theorem]{Lemma}
\newaliasedtheorem{corollary}[theorem]{Corollary}
\newaliasedtheorem{conjecture}[theorem]{Conjecture}
\newaliasedtheorem{counterexample}[theorem]{Counterexample}

\theoremstyle{definition}
\newaliasedtheorem{definition}[theorem]{Definition}
\newaliasedtheorem{question}[theorem]{Question}
\newaliasedtheorem{example}[theorem]{Example}

\theoremstyle{remark}
\newaliasedtheorem{remark}[theorem]{Remark}

\let\altphi\phi
\let\phi\varphi
\let\varphi\altphi
\let\altphi\undefined

\newcommand{\id}{\mathrm{id}}

\newcommand{\di}{\mathop{}\!\mathrm{d}}

\newfont{\tmpf}{cmsy10 scaled 2500}

\newcommand{\intav}{{\mathop{\int\kern-10pt\rotatebox{0}{\textbf{--}}}}}

\renewcommand{\ }{\text{ }}
\def\<{\langle}
\def\>{\rangle}

\begin{document}
\title{Asymptotic behavior of solutions to linear evolution equations with time delay via a spectral theory on Gelfand triples}
\author{
Haozhe Shu
\thanks{Mathematical Institute, Tohoku University, Sendai, 980-8578, Japan / Advanced Institute for Material Research, Tohoku University, Sendai, 980-8577, Japan; \url{shu.haozhe.t7@dc.tohoku.ac.jp }}} 
\maketitle
\begin{abstract}
In this paper, a class of linear evolution equations with time delay is studied in which the presence of continuous spectrum on the imaginary axis obstructs the analysis of long-time dynamics. To address it, a generalized spectral framework on a Gelfand triple is utilized. When the spectral measure of the unperturbed term (a skew-adjoint operator) admits some analyticity condition, the resolvent is extended to a generalized resolvent. Called generalized spectrum, the collection of singularities on the Riemann surface of the generalized resolvent may differ from the spectrum in the usual sense because of the change of topology via the Gelfand triple. 
It is shown that under some compactness assumption, the generalized spectrum consists only of isolated generalized eigenvalues (resonance poles). This structure allows contour deformation in the inverse Laplace representation and yields exponential decay in a weak topology. As an application, we analyze the continuum limit of the Kuramoto-Daido model with time delay and prove linear stability of the incoherent state in the weak coupling regime.
\end{abstract}
\noindent \textbf{Keywords.} Gelfand triple; Rigged Hilbert space; Continuous spectrum; Resonance pole

\section{Introduction} 
Let $\mathcal{H}$ be a separable Hilbert space and let $\tau>0$. We consider the linear evolution equation with a single time delay
\begin{equation}\label{eq:sec1_linear evolution}
    \partial_t u(t)=iHu(t)+Ku(t-\tau),
\end{equation}
where $H$ is an (unbounded) self-adjoint operator on $\mathcal{H}$ and $K$ is bounded. $u:[-\tau,+\infty)\to \mathcal{H}$. When $\tau=0$, the equation reduces to a bounded perturbation of the skew-adjoint generator $iH$. The presence of delay introduces a nonlocal effect in time and leads to a transcendental characteristic equation. Our objective is to analyze the long-time behavior of solutions and to clarify how the delay term modifies the spectral structure of the associated generator.

The long-time behavior of linear evolution equation $\frac{\di}{\di t}x=Ax$ is closely related to the spectral properties of the associated generator $A$. When $A$ generates an analytic semigroup, the growth bound of the semigroup coincides with the spectral bound $s(A):=\sup\{\Re(\lambda)|\lambda\in \sigma(A)\}$. In general, however, this identity fails for $C_0$ semigroups. The Gearhart-Prüss theorem \cite{nagel} shows that exponential stability requires uniform boundedness of the resolvent on the imaginary axis, namely
$$\sup_{\lambda\in  i\mathbb{R}}||(\lambda-A)^{-1}||<\infty.$$

Difficulties arise when the continuous spectrum of $A$ intersects the imaginary axis. In this case, the resolvent cannot remain uniformly bounded near the spectrum, and the classical exponential stability criterion is no longer applicable. Nevertheless, solutions may still exhibit decay in a weak topology. Such weak decay phenomena are known as Landau damping in plasma physics \cite{ch}. The corresponding decay rates are often described in terms of resonance poles \cite{reed,rauch,gp} or zeros of the Evans function \cite{zumbrun}.

To illustrate how continuous spectrum arises in the delay setting, we rewrite (\ref{eq:sec1_linear evolution}) as an abstract Cauchy problem on the product Hilbert space $\mathcal{H}\times L^2([-\tau,0];\mathcal{H})$, following \cite{batkai1,batkai2}. There is a one-to-one correspondence between solutions of (\ref{eq:sec1_linear evolution}) and solutions of
\begin{equation}
    \begin{aligned}
        \frac{\di}{\di t}\mathcal{U}(t)=\mathcal{A}\mathcal{U}(t):=
        \begin{pmatrix}
             iH & \hat{K}\\
            0 & \frac{\di}{\di s}
        \end{pmatrix}
        \mathcal{U}(t),
    \end{aligned}
\end{equation}
where $\mathcal{U}:[-\tau,+\infty)\to \mathcal{H}\times L^2([-\tau,0];\mathcal{H})$ and $\hat{K}:L^2([-\tau,0];\mathcal{H})\to \mathcal{H}$ is defined by $\hat{K}f:=Kf(-\tau)$.

Solving $(\lambda-\mathcal{A})(x,f)^\top=(y,g)^\top$ leads to the characteristic equation
\begin{equation}\label{eq:sec1_characteristic equation}
    \Delta(\lambda)x=(\lambda- iH-e^{-\lambda\tau}K)x=0.
\end{equation}
Moreover, the resolvent of $\mathcal{A}$ is given by
\begin{equation}
    R(\lambda;\mathcal{A})=\begin{pmatrix}
        \Delta(\lambda)^{-1} & \Delta(\lambda)^{-1}\hat{K}(\lambda-A_L)^{-1})\\
        \varepsilon_\lambda\otimes \Delta(\lambda)^{-1} & [\varepsilon_\lambda\otimes \Delta(\lambda)^{-1}+\id](\lambda-A_L)^{-1}
    \end{pmatrix}.
\end{equation}
A detailed explanation of the notation will be given in Section~\ref{sec:2}. This presentation shows that the spectrum of $\mathcal{A}$ coincides with the set of singularities of the operator-valued function $\Delta(\lambda)^{-1}$ (see \cite{batkai1}). We refer to $\Delta(\lambda)^{-1}:\mathcal{H}\to\mathcal{H}$ as the retarded resolvent associated with the delay equation, following \cite{nakagiri2}.

Applying the inverse Laplace transform, solutions of the original delay equation (\ref{eq:sec1_linear evolution}) can be represented as 
\begin{equation}\label{eq:sec1_inverse laplace}
    u(t)=\frac{1}{2\pi i}\int_\Gamma e^{\lambda t}\Delta(\lambda)^{-1}x\di \lambda+\frac{1}{2\pi i}\int_\Gamma e^{\lambda t}\Delta(\lambda)^{-1}\hat{K}(\lambda-A_L)^{-1}f\di \lambda,
\end{equation}
where $(x,f)$ corresponds to the initial data. The contour $\Gamma$ is taken to be the vertical line $\{\lambda\mid \Re(\lambda)=a\}$ with $a$ sufficiently large. The difficulty in analyzing the long-time behavior lies in the fact that inherited from the continuous spectrum of $iH$, continuous singularities of $\Delta(\lambda)^{-1}$ may exist on the imaginary axis. As a consequence, direct contour deformation in the inverse Laplace representation (\ref{eq:sec1_inverse laplace}) is obstructed.

In the present paper, construction of a \textit{suitable} Gelfand triple (Rigged Hilbert space) $X\subset\mathcal{H}\subset X^\prime$ plays a central role in overcoming this difficulty caused by continuous spectrum. Here $X$ is a dense subspace of $\mathcal{H}$ endowed with a stronger topology and $X^\prime$ denotes its topological dual. The inclusion map $\kappa:X\to X^\prime$ is continuous and has dense range. The motivation for introducing the Gelfand triple is that the spectral properties of an operator may change under a modification of topology. Following a theoretic framework of Chiba \cite{chibaA,chibaB}, we show that under proper analyticity conditions, the retarded resolvent $\Delta(\lambda)^{-1}$ admits an extension as an operator from $X$ to $X^\prime$, denoted by
\begin{equation}
    \mathcal{R}_\tau(\lambda)\circ\kappa:X\to X^\prime.
\end{equation}
We refer to $\mathcal{R}_\tau(\lambda)$ as the generalized retarded resolvent. The generalized spectrum $\hat{\sigma}(\mathcal{A})$ is defined as the set of singularities of $\mathcal{R}_\tau(\lambda)\kappa$. Since $X$ has a stronger topology than $\mathcal{H}$ and $X^\prime$ has a weaker topology, continuous spectrum of $\mathcal{A}$ may disappear in the generalized sense. More precisely, it is shown that under some additional compactness assumptions on the perturbation term $K$, the generalized spectrum consists of only isolated generalized eigenvalues (resonance poles). In this context, by replacing $\Delta(\lambda)^{-1}$ by $\mathcal{R}_\tau(\lambda)\circ \kappa$ in (\ref{eq:sec1_inverse laplace}), deformation of integral contour can be implemented traced along a Riemann surface induced by the Gelfand triple. Fig.~\ref{fig:1} is an illustration for this procedure. Here $u(t)$ takes values in $X^\prime$ and the integrals (known as weak$^\ast$ Pettis integrals or Gelfand integrals) converge in dual topology.
\begin{figure}
    \centering
    \includegraphics[width=0.5\linewidth]{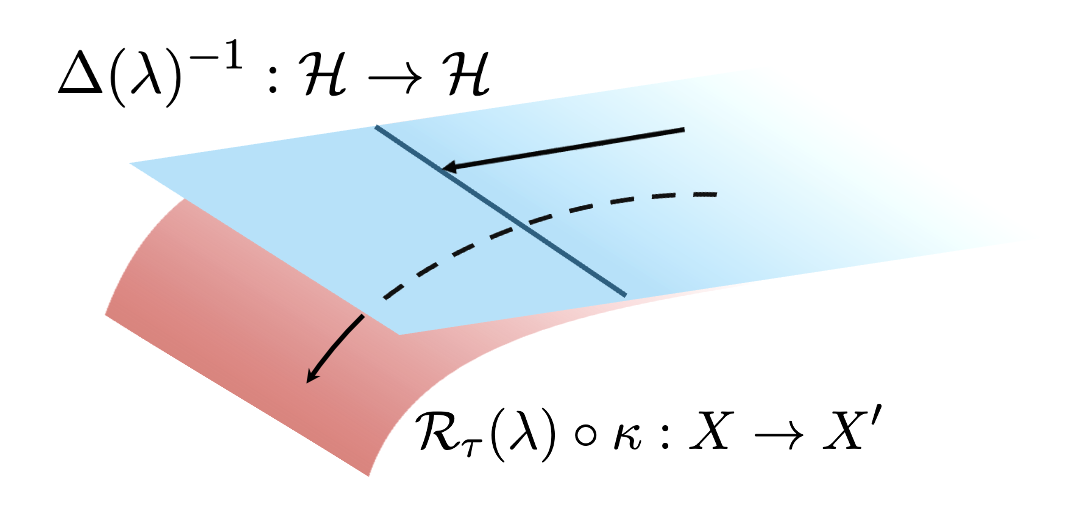}
    \caption{In the classical sense, the contour cannot be deformed to the left complex half-plane due to existence of continuous spectrum (blue line). By extending the retarded resolvent via a Gelfand triple, continuous spectrum disappear from a new Riemann surface. Analytic contour deformation is applicable where resonance poles of $\mathcal{R_\tau}(\lambda)\kappa$ on the second Riemann sheet explains asymptotic behavior of weak solutions.}
    \label{fig:1}
\end{figure}
\subsection{Main assumptions}
\noindent\textbf{Analyticity assumption.}
In this paper, proper assumptions are imposed to the Gelfand triple $X\subset \mathcal{H}\subset X^\prime$ such that the resolvent of the unperturbated term $iH$, $(\lambda-iH)^{-1}$ admits an analytic continuation across the branch cut $i\mathbb{R}$ to the left complex half-plane. The continuation, denoted by $A(\lambda):\kappa X\to X^\prime$, is represented as 
\begin{equation}\label{eq:sec1_A(lambda)}
    \langle A(\lambda)\kappa(\psi)|\varphi\rangle =\begin{cases}
        \int_{-\infty}^{+\infty}\frac{1}{\lambda-i\omega}E[\psi,\varphi](\omega)\di \omega+2\pi E[\psi,\varphi](-i\lambda),\quad \Re(\lambda)<0,\\
        \lim_{\Re(\lambda)\to 0+}\int_{-\infty}^{+\infty}\frac{1}{\lambda-i\omega}E[\psi,\varphi](\omega)\di \omega,\quad \Re(\lambda)=0,\\
        \int_{-\infty}^{+\infty}\frac{1}{\lambda-i\omega}E[\psi,\varphi](\omega)\di \omega,\quad \Re(\lambda)>0,
    \end{cases}
\end{equation}
for each $\psi,\varphi\in X$. Here, $E[\psi,\varphi](\omega):=\frac{\di}{\di \omega}(E(\omega)\psi,\varphi)$ is the density function of the spectral measure of $H$. It is clear that 
$$
\langle A(\lambda)\kappa(\psi)|\varphi\rangle= ((\lambda-iH)^{-1}\psi,\varphi)
$$
on the right half-plane. 

When the spectral measure $E(\omega)$ is additionally assumed to be absolutely continuous, an injective factor $\lambda-iH$ can be taken out from the characteristic equation (\ref{eq:sec1_characteristic equation}). By replacing $(\lambda-iH)^{-1}$ by its analytic continuation $A(\lambda)$, We can define the set of generalized eigenvalues (resonance poles) $\hat{\sigma}_p(\mathcal{A})$ by a collection of $\lambda\in \mathbb{C}$ such that there exists some $\psi\in \kappa X$ satisfying 
$$
(\id-e^{-\lambda\tau} K^{\times}A(\lambda))\psi=0.
$$
Here $K^\times:X^\prime\to X^\prime$ is a dual operator. Similarly, we can define the generalized retarded resolvent $\mathcal{R}_\tau(\lambda):\kappa X\to X^\prime$ by 
$$
\mathcal{R}_\tau(\lambda)=A(\lambda)\circ(\id-e^{-\lambda\tau}K^\times A(\lambda))^{-1}.
$$

\noindent\textbf{Compactness assumption.}
In the representation of $\mathcal{R_\tau}(\lambda)$, $K^\times A(\lambda)$ is well defined and maps $\kappa X$ into itself due to some duality assumptions (see \textbf{(H5-6)} in Section~\ref{sec:3}). The compactness assumption refers to uniform compactness of $\kappa ^{-1}K^\times A(\lambda)\kappa:X\to X$ with respect to $\lambda$. That is, there exists some neighborhood $V$ such that $\kappa ^{-1}K^\times A(\lambda)\kappa(V)$ is relatively compact in $X$ for all $\lambda$. 

In fact, it will be shown that the linear operator $K^\times A(\lambda)$ provides a analytic continuation of $K(\lambda-iH)^{-1}$. The compactness assumption stated here corresponds to some relatively compactness of the perturbation $K$ and is satisfied in a lot of practical problems like the Friedrichs models \cite{gp} and evolution equations of Schrödinger type.  
\subsection{Main results}
Denoted by $\hat{\sigma}(\mathcal{A})$, the generalized spectrum associated with the delay equation  will be defined by the collection of singularities of $\mathcal{R}_\tau(\lambda)\kappa:X\to X^\prime$. The generalized spectrum is shown to have properties similar to those of the classical spectral theory, even though $X$ is not necessarily normable. 
Most importantly, the Riesz-Schauder theorem can be applied to the Gelfand tripe to show the following (Theorem \ref{thm:riesz schauder} in Section~\ref{sec:3}).
\begin{theorem}
    Assume that $\kappa^{-1}K^\times A(\lambda)\kappa:X\to X$ is compact, uniformly in $\lambda$. We have $\hat{\sigma}(\mathcal{A})=\hat{\sigma}_p(\mathcal{A})$.
\end{theorem}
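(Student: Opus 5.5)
The inclusion $\hat{\sigma}_p(\mathcal{A})\subset\hat{\sigma}(\mathcal{A})$ is essentially definitional, so the work is in the reverse inclusion. For $\lambda\notin\hat{\sigma}_p(\mathcal{A})$ I will show that $\id-e^{-\lambda\tau}K^\times A(\lambda)$ has a continuous inverse on $\kappa X$, and that this inverse depends holomorphically on $\lambda$ near such a point. Then $\mathcal{R}_\tau(\lambda)\kappa=A(\lambda)\circ(\id-e^{-\lambda\tau}K^\times A(\lambda))^{-1}\kappa$ is holomorphic there, since $A(\lambda)\kappa$ is holomorphic on the Riemann surface by the analyticity assumption. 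Hence $\lambda\notin\hat{\sigma}(\mathcal{A})$.

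To set this up, I transport everything to $X$. Put
\[
T(\lambda):=e^{-\lambda\tau}\kappa^{-1}K^\times A(\lambda)\kappa:X\to X .
\]
This is well defined by the duality hypotheses (H5--6), and it is compact in the sense that a fixed neighborhood $V$ of $0$ is mapped into a relatively compact set, uniformly in $\lambda$ on compact sets. The claim then reduces to the following. If $\id-T(\lambda_0)$ is injective on $X$, then it is bijective with continuous inverse. Moreover, $(\id-T(\lambda))^{-1}$ exists and is holomorphic for $\lambda$ near $\lambda_0$.

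The first part is a Riesz--Schauder/Fredholm alternative in a locally convex (not necessarily normable) space. I would use the version for operators that map a neighborhood of $0$ into a relatively compact set; such maps are sometimes called Montel or compact operators on lcs, for which Riesz theory holds (Schaefer/Köthe, or Leray's theory). For these operators, $\id-T$ has closed range of finite codimension equal to $\dim\ker$, and is a topological isomorphism when it is injective. I expect the paper to have such a Riesz--Schauder theorem available for $X$ (quasi-complete barreled, e.g.\ a Montel/Fréchet space). If not, I would reduce to Banach spaces. Let $p$ be the Minkowski functional of a suitable absolutely convex $V$, and let $X_V$ be the completion of $X/\ker p$. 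Then $T$ factors through a compact operator on the Banach space generated by the closure of $T(V)$. Classical Riesz--Schauder applies there, and the conclusion is lifted back to $X$.

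The main obstacle is holomorphic dependence and the step from pointwise invertibility to the absence of singularities. Here the uniform compactness is used. I would write
\[
\id-T(\lambda)=(\id-T(\lambda_0))\bigl[\id-(\id-T(\lambda_0))^{-1}(T(\lambda)-T(\lambda_0))\bigr].
\]
Holomorphy of $\lambda\mapsto T(\lambda)$ with values in the operators bounded on $V$ gives that $T(\lambda)-T(\lambda_0)$ maps $V$ into a set shrinking to $0$ as $\lambda\to\lambda_0$. This yields a Neumann series in a Banach space $X_B$ spanned by a bounded absolutely convex set containing all the compact images. The inverse is then analytic, as a power series in $\lambda-\lambda_0$.

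Alternatively, the analytic Fredholm theorem can be applied on $X_B$. On a connected domain, $(\id-T(\lambda))^{-1}$ is meromorphic with poles exactly at the $\lambda$ where $\ker(\id-T(\lambda))\neq0$, because invertibility holds for large $\Re\lambda$ via the true resolvent. Then every singularity of $\mathcal{R}_\tau(\lambda)\kappa$ is a pole at which a nontrivial $\psi\in\kappa X$ solves $(\id-e^{-\lambda\tau}K^\times A(\lambda))\psi=0$, i.e.\ $\hat{\sigma}(\mathcal{A})=\hat{\sigma}_p(\mathcal{A})$. The technical care will go into checking that the kernel found in $X_B$ actually lies in $X$; it does, because the compact image sits in $\overline{T(V)}\subset X$.
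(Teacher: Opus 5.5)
Your proposal is correct and is essentially the paper's argument. You reduce to a Banach space built from the fixed neighborhood $V$, apply the classical Riesz--Schauder theorem there, lift back to $X$, and then use a Neumann-series perturbation to cover nearby $\mu$. The paper applies Riesz--Schauder on the completion $Z_0$ of $X/\ker p$, whereas you work on the range-side space $X_B$, which makes the lift back to $X$ immediate. The paper also hands the neighborhood step to Lemma~\ref{lemma:pointwise_define} (Chiba's Proposition~3.18 via Bruyn's theorem), which is exactly the argument you write out.

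You should state one thing explicitly. Your last step needs the joint form of the hypothesis: $\bigcup_{\mu}T(\mu)(V)$, taken over a neighborhood of $\lambda_0$, must lie in a single compact absolutely convex set $B$, so that Cauchy estimates make $\mu\mapsto T(\mu)$ norm-holomorphic on $X_B$. This is stronger than the paper's literal definition of ``compact uniformly in $\lambda$'', but Lemma~\ref{lemma:pointwise_define} tacitly requires it as well. With it, your analytic Fredholm variant even gives the isolatedness of $\hat{\sigma}_p(\mathcal{A})$, which the paper asserts in its introduction but does not prove in this theorem.
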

It implies that under the compactness condition as stated before, only isolated generalized eigenvalues exist on the Riemann surface of $\mathcal{R}_\tau(\lambda)\kappa$ (i.e. $\hat{\sigma}_c(\mathcal{A})=\hat{\sigma_r}(\mathcal{A})=\emptyset$).
Specifically, we know that for any $\psi,\varphi\in X$, $(\Delta(\lambda)^{-1}\psi,\varphi)$ also has an analytic continuation $\langle \mathcal{R}_\tau(\lambda)\kappa(\psi)|\varphi\rangle$ across the imaginary axis to the left half-plane. 

Hence, contour deformation can be applied to the inverse Laplace representation of weak solutions to the delay equation (\ref{eq:sec1_linear evolution}). We can obtain a spectral expansion (see Corollary \ref{coro:eigenexpansion} in Section~\ref{sec:3}) in which the generalized eigenvalues (resonance poles) dominate the asymptotics of solutions. 
Consequently, we can still obtain an (exponential) asymptotic behavior of weak solutions in the case continuous spectrum (in the classical sense) intersects the imaginary axis. 

The theory is applied to show linear stability of the incoherence of the infinite dimensional Kuramoto model with time delay.

The Kuramoto model, known as a relatively simple mean-field model of coupled phase oscillators, plays an important role in studying the collective synchronization phenomenon \cite{kuramoto,strogatza}. By incorporating delayed mean-field coupling, the Kuramoto model is modified as \cite{yeung}
\begin{equation}{\label{eq:sec1_km1}}
    \frac{\di}{\di t}\theta_i(t)=\omega_i+\frac{k}{N}\sum_{j=1}^{N}\sin{(\theta_j(t-\tau)-\theta_i(t))},
\end{equation}
for $i=1,2,...,N.$
Here $\theta_i(t)$ denotes the phase of the $i$-th oscillator on $\mathbb{T}^1:=\mathbb{R}/2\pi \mathbb{Z}$ at time $t$. $\omega_i$ denotes the natural frequency, sampled from a probability density function $g(\omega)$ with mean $\omega_0$. $k$ is called the coupling strength, varying from the whole real axis. 

The synchronization transition can be identified using the complex order parameter, which reads
\begin{equation}
    r(t):=\frac{1}{N}\sum_{i=1}^{N}e^{ i\theta_i(t)}.
\end{equation}
It is known that if the coupling strength is smaller than some threshold, $|r(t)|$ associated with each trajectory of (\ref{eq:sec1_km1}) tends to $0$ as $t\to \infty$. On the other hand, the synchronization phenomenon appears in the strong coupling regime and each synchronization state can be identified as the quantity $|r(t)|$ tends to some positive value. 

When $N\to \infty$, dynamics of the Kuramoto model can be studied using the continuity equation (the Fokker-Planck equation with zero diffusion). The Continuum limit of (\ref{eq:sec1_km1}) reads 
\begin{equation}\label{eq:sec1_km3}
    \begin{cases}
    \partial _t \rho+\partial_\theta(V[\rho]\rho)=0,\\
     V[\rho]=\omega+k\int_{-\infty}^{+\infty}\int_0^{2\pi}\sin(\theta_\ast-\theta)\rho(t-\tau,\omega_\ast,\theta_\ast)g(\omega_\ast)\di \theta_\ast \di \omega_\ast.\\
    \end{cases}
\end{equation}
Here, $\rho(t,\omega,\theta)$ is the density function of oscillators currently at phase $\theta$ with frequency $\omega$. The normalization condition is $\int_0^{2\pi}\rho(t,\omega,\theta)=1$ for all $\omega$ and $t$. The order parameter is modified as follows 
\begin{equation}
    r(t)=|r(t)|e^{ i\psi(t)}=\int_{-\infty}^{+\infty}\int_0^{2\pi}e^{ i\theta}\rho(t,\omega,\theta)g(\omega)\di \theta \di \omega.
\end{equation}

    A trivial steady state of (\ref{eq:sec1_km3})
    \begin{equation}
        \rho(t,\omega,\theta)\equiv\frac{1}{2\pi}
    \end{equation}
    is called the incoherent state. It describes the completely de-synchronous state where all the oscillators are uniformly distributed on $\mathbb{T}^1$ (i.e. $r(t)\equiv0$).

According to linearization (see Section~\ref{sec:4}), the task reduces to establishing the asymptotic stability of the steady state for the following linear partial functional differential equation. 
\begin{equation}\label{eq:sec1_kuramoto}
        \partial_t u(t,\omega)= i\omega u(t,\omega)+\frac{k}{2}\int_{-\infty}^{+\infty}u(t-\tau,\omega)g(\omega)\di \omega.
\end{equation}
Here $k\in \mathbb{R}$ serves as a bifurcation parameter and $g(\omega)$ is assumed to be Gaussian
$
g(\omega)=\sqrt{\frac{h}{\pi}}e^{-h(\omega-\omega_0)^2}.
$
$u(t,\cdot)\in L^2(\mathbb{R},g(\omega)\di \omega)$. Denote by $\mathbb{I}$, a constant function $\mathbb{I}\equiv1$. The order parameter $r(t)=(u(t),\mathbb{I})_{L^2(\mathbb{R},g(\omega))}$.

We prove that there exists a Gelfand triple $\text{Exp}\subset L^2(\mathbb{R},g(\omega)\di \omega)\subset \text{Exp}^\prime$ and a critical value $k_c(\tau)=\frac{2}{\tau}\arccos(\cos(\omega_0\tau))-\frac{\pi}{\tau}$ such that the following holds (Theorem \ref{thm:linear_stability} in Section \ref{sec:4}). 
\begin{theorem}
    When $|k|<|k_c(\tau)|$ and $\omega_0\not=0$, there exists $h^\ast$ such that for any $h>h^\ast$ and initial data $f\in C([-\tau,0];\mathrm{Exp})$ with $f(0)=x$, $r(t)$ decays exponentially to zero as $t\to\infty$. 
\end{theorem}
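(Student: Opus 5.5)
The plan is to apply the abstract framework (Theorem \ref{thm:riesz schauder} and Corollary \ref{coro:eigenexpansion}) with $H$ the multiplication operator $(Hu)(\omega)=\omega u(\omega)$ on $L^2(\mathbb{R},g(\omega)\di\omega)$. We take $K=\frac{k}{2}\,\mathbb{I}\otimes\mathbb{I}$, that is $Ku=\frac{k}{2}(u,\mathbb{I})\mathbb{I}$. For the test space $X=\mathrm{Exp}$ we take entire functions of exponential type, with an inductive limit topology, so that the spectral density
$$E[\psi,\varphi](\omega)=\psi(\omega)\overline{\varphi(\bar\omega)}\,g(\omega)$$
extends holomorphically to all of $\mathbb{C}$. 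The Gaussian $g$ is entire, which is what makes this work.

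\textbf{Step 1: verify the hypotheses.} With this choice, $A(\lambda)$ in (\ref{eq:sec1_A(lambda)}) is defined on the whole of $\mathbb{C}$ and the hypotheses \textbf{(H1)}--\textbf{(H6)} hold. Since $K$ has rank one, $\kappa^{-1}K^\times A(\lambda)\kappa$ has rank one, hence is compact uniformly in $\lambda$. Theorem \ref{thm:riesz schauder} then gives $\hat\sigma(\mathcal{A})=\hat\sigma_p(\mathcal{A})$.

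\textbf{Step 2: reduce to a dispersion relation.} Because $K$ has rank one, $\lambda$ is a generalized eigenvalue iff
$$D(\lambda):=1-\frac{k}{2}e^{-\lambda\tau}P(\lambda)=0,\qquad P(\lambda):=\langle A(\lambda)\kappa\mathbb{I}|\mathbb{I}\rangle.$$
Here $P(\lambda)=\int\frac{g(\omega)}{\lambda-i\omega}\di\omega$ for $\Re\lambda>0$, continued analytically as in (\ref{eq:sec1_A(lambda)}). The order parameter is then
$$\hat r(\lambda)=\frac{P(\lambda)}{D(\lambda)}\times(\text{data terms}),$$
coming from the representation (\ref{eq:sec1_inverse laplace}) with $\Delta^{-1}$ replaced by $\mathcal{R}_\tau(\lambda)\kappa$. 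Concretely,
$$r(t)=\frac{1}{2\pi i}\int_\Gamma e^{\lambda t}\langle\mathcal{R}_\tau(\lambda)\kappa(\cdot)|\mathbb{I}\rangle\di\lambda.$$

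\textbf{Step 3 (main obstacle): locate the zeros of $D$.} I must show that for $|k|<|k_c(\tau)|$, $\omega_0\neq0$ and $h$ large, $D$ has no zeros in $\Re\lambda\ge-\delta$ for some $\delta>0$, and that $D$ stays bounded away from zero there for large $|\Im\lambda|$. The key computation uses the Gaussian. Writing $\lambda=i\omega_0+\mu$, one has $P(\lambda)=\sqrt{\pi h}\,w(\cdot)$, where $w$ is the Faddeeva function. As $h\to\infty$ the density concentrates at $\omega_0$, so $P(\lambda)\to(\lambda-i\omega_0)^{-1}$ locally uniformly away from $i\omega_0$. The extra term $2\pi g(-i\lambda)$ in the left half-plane also needs control: near $\Re\lambda=0$ it is bounded by $O(\sqrt h\,e^{h(\Re\lambda)^2})$, which grows only in a strip whose width shrinks. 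The limiting equation is
$$\lambda-i\omega_0=\frac{k}{2}e^{-\lambda\tau},$$
a scalar delay characteristic equation. Its roots cross the imaginary axis exactly when $|k|/2=|\mu|$ with $\mu=i\nu$, which leads to a threshold expressible through $\arccos(\cos(\omega_0\tau))$. This is how $k_c(\tau)$ arises, and below it all roots satisfy $\Re\lambda<0$. Rouché's theorem on compact pieces, together with uniform convergence of $P$ as $h\to\infty$, then transfers the absence of roots in $\Re\lambda\ge-\delta$ to $D$ for $h>h^\ast$.

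\textbf{Step 4: conclude.} For large $|\Im\lambda|$, I bound $|P(\lambda)|\lesssim 1/|\lambda|$ on a vertical strip, so $D\to1$ and there are no zeros far away. I then shift $\Gamma$ to $\Re\lambda=-\delta$ using the analytic continuation (Corollary \ref{coro:eigenexpansion}). Since no poles are crossed, $r(t)$ equals the shifted integral. Finally, I control that integral using the $1/|\lambda|$ decay of $P$ and integrability of the data term for $f\in C([-\tau,0];\mathrm{Exp})$, integrating by parts once if needed. This gives $|r(t)|\le Ce^{-\delta t}$.

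The delicate part is Step 3: simultaneously controlling the growing term $2\pi g(-i\lambda)$ in the left half-plane and the matching of the exact threshold $k_c(\tau)$.
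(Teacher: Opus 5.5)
Your route differs from the paper's. You shift the Bromwich line only to $\Re\lambda=-\delta$ and estimate the remaining integral. The paper instead deforms into a left-opening sector and writes $r(t)$ as a residue series over infinitely many resonances; that expansion is valid only for $t$ beyond some $t_0$ and requires controlling the series. Your version would avoid both issues, but Step~3 fails as planned.

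The claim that $P(\lambda)\to(\lambda-i\omega_0)^{-1}$ locally uniformly away from $i\omega_0$ is false in the left half-plane. Since $|2\pi g(-i\lambda)|=2\sqrt{\pi h}\,e^{h[(\Re\lambda)^2-(\Im\lambda-\omega_0)^2]}$, $P$ blows up on the sector $|\Re\lambda|>|\Im\lambda-\omega_0|$, which meets every strip $-\delta\le\Re\lambda<0$ in a triangle with vertex $i\omega_0$. Even in a strip of width $O(h^{-1/2})$, this term is of size $\sqrt h$ near $i\omega_0$, comparable to $(\lambda-i\omega_0)^{-1}$, so it is not a perturbation there.

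More importantly, $D$ vanishes there. With $\zeta=i\sqrt h(\lambda-i\omega_0)$ you have $P(\lambda)=\sqrt{\pi h}\,w(\zeta)$ on all of $\mathbb{C}$, so $D(\lambda)=0$ reads $w(\zeta)=\frac{2}{k\sqrt{\pi h}}e^{\lambda\tau}=O(h^{-1/2})$ for bounded $\zeta$. The zeros of the Faddeeva function are simple (since $w'=-2\zeta w+2i/\sqrt\pi$), lie in $\Im\zeta<0$, and the first ones are at $\zeta\approx\pm1.99-1.35i$. By Hurwitz, each zero $\zeta_j$ yields a generalized eigenvalue $\lambda\approx i\omega_0-i\zeta_j/\sqrt h$; the rightmost of these has $\Re\lambda\approx-1.35/\sqrt h$.

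Hence for every fixed $\delta>0$ there are resonances in $-\delta<\Re\lambda<0$ once $h$ is large. No $h$-independent zero-free strip exists, and no Rouch\'e comparison with $\lambda-i\omega_0=\frac k2e^{-\lambda\tau}$ can detect these zeros. They are also absent from the Hausdorff-limit description in the proof of Theorem~\ref{thm:km_resonance_pole}.

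Since the statement only asks for decay at each fixed $h$, the missing ingredient is a two-scale analysis with $\delta=c/\sqrt h$, $c<1.35$. On $|\zeta|\le R$, $\Im\zeta\ge-c$, use that $w(\zeta)-\frac{2}{k\sqrt{\pi h}}e^{\lambda\tau}\to w(\zeta)$ uniformly and that $w$ has no zeros there. On $|\zeta|\ge R$, $\Im\zeta\ge-c$, use
$w(\zeta)=\frac{i}{\sqrt\pi\,\zeta}\bigl(1+O(|\zeta|^{-2})\bigr)+2e^{-\zeta^2}\mathbf{1}_{\{\Im\zeta<0\}}$ together with $|e^{-\zeta^2}|\le e^{c^2-(\Re\zeta)^2}$. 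This reduces $D$ to $1-G(\lambda)$ up to small relative errors. The result is $|r(t)|\le C_he^{-ct/\sqrt h}$, with a rate that degenerates as $h\to\infty$.

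There are further points to fix. First, the threshold claim at the end of Step~3 needs a sign condition. The root of the limiting equation emanating from $i\omega_0$ is $i\omega_0+\frac k2e^{-i\omega_0\tau}+O(k^2)$, so stability requires $k\cos(\omega_0\tau)<0$, and Theorem~\ref{thm:nishi} gives exactly $0<k/k_c(\tau)<1$. For small $k$ with $k\,k_c(\tau)<0$ there is a genuine eigenvalue $\lambda_*$ with $\Re\lambda_*>0$ for all large $h$. The data $f(s)=e^{\lambda_*s}(\lambda_*-i\omega)^{-1}$ then give $r(t)=\frac2k e^{\lambda_*(t+\tau)}$, which grows. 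So as literally stated the theorem fails for such $k$; the hypothesis, inherited from Corollary~\ref{coro:critical_value}, should be ``$k$ strictly between $0$ and $k_c(\tau)$''.

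Second, you should not shrink $\mathrm{Exp}$ to entire functions. For $\Re\lambda<0$ the continuation only evaluates $\psi(-i\lambda)$ with $\Im(-i\lambda)>0$, so the paper's space suffices and covers all initial data in the statement.

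Third, in Step~4, $\hat r$ is not $P/D$ times a scalar for general $x$. Its $x$-part is $\langle A(\lambda)\kappa x|\mathbb{I}\rangle/D(\lambda)=c_0/\lambda+O(|\lambda|^{-2})$ on the shifted line; subtract $c_0/(\lambda+2\delta)$ to get an absolutely convergent integral. Its $f$-part is $P/D$ times the Fourier--Laplace transform of a continuous function on an interval of length $\tau$, i.e.\ a product of two $L^2$ functions of $\Im\lambda$. So no integration by parts is needed, which is fortunate since $f\in C([-\tau,0];\mathrm{Exp})$ would not permit one.
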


The stability region ($|k|<|k_c(\tau)|$) is illustrated in Fig.~\ref{fig:2}. We point out that in the literature of Kuramoto model with time delay \cite{yeung,wd}, only instability of the incoherence can be stated. For further details on the Landau damping phenomenon in Kuramoto model we refer the reader to, for example, \cite{smm,chibaA,fernandez,medvedev,ha}.
\begin{figure}
    \centering
    \includegraphics[width=0.5\linewidth]{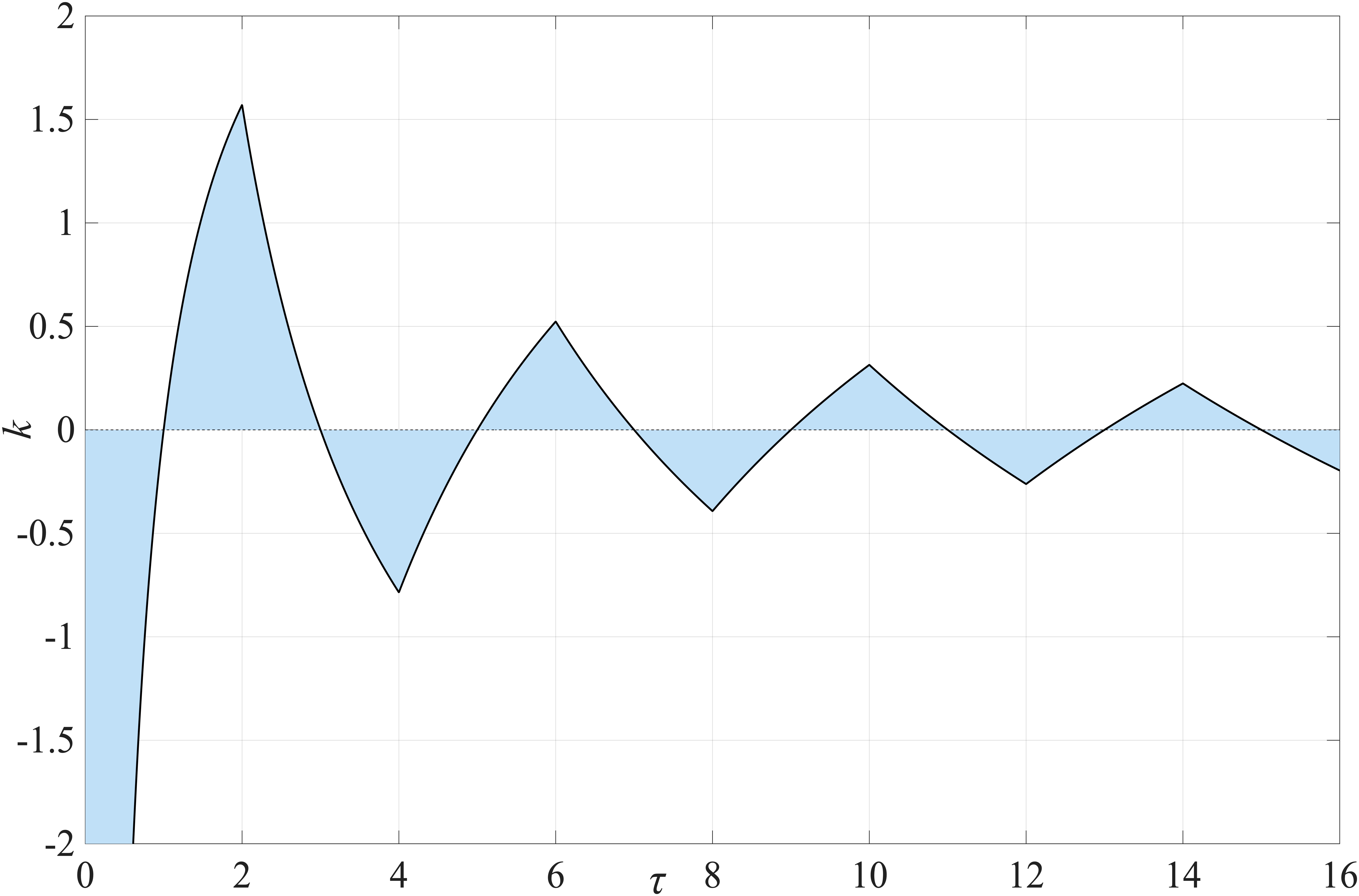}
    \caption{Linear stability region ($|k|<|k_c(\tau)|$) for the incoherent state with $\omega_0=\pi/2$.}
    \label{fig:2}
\end{figure}
\subsection{Outline}
The remainder of this paper is organized as follows.

In Section \ref{sec:2}, a classical spectral theory of delay equation in Hilbert space is introduced, following \cite{batkai1,batkai2}. 

In Section \ref{sec:3}, a generalized spectral theory for delay equation is developed via a Gelfand triple. Without relying on Evans functions or scattering matrices, Resonance poles as introduced in Definition \ref{def:resonance_pole} are defined by generalizing the characteristic equation $\Delta(\lambda)x=0$. Later, by generalizing the retarded resolvent $\Delta(\lambda)^{-1}:\mathcal{H}\to \mathcal{H}$ to the generalized resolvent $\mathcal{R}_\tau(\lambda)\kappa:X\to X^\prime$, we define the generalized spectrum as a collection of singularities of the latter (see Definition \ref{def:generalized_spectrum}).
We show that the generalized resolvent provides an analytic continuation of $\Delta(\lambda)^{-1}$ (see Proposition \ref{pro:analytical_resolvent} and \ref{pro:generalized_resolvent_continuation}) and several spectral properties of the generalized retarded resolvent are introduced. One of the main theorems is stated in Theorem \ref{thm:riesz schauder} revealing the "disappearance" of continuous spectrum under compactness condition. The asymptotics of solutions will be addressed by studying the inverse Laplace formula (see Corollary \ref{coro:eigenexpansion}).

In Section \ref{sec:4}, as an application, we prove the linear stability of the incoherence of the delayed Kuramoto model in the weak coupling regime. The conclusion is stated in Theorem \ref{thm:linear_stability}.
\section{Spectral theory on Hilbert space}\label{sec:2}
Throughout this section, we investigate the following delayed linear evolution equation
\begin{equation}\label{eq:delay_equation}
    \frac{\di}{\di t}u(t)=Au(t)+\Phi u_t,
\end{equation}
in a Hilbert space $\mathcal{H}$. Here, $A$ is a densely defined closed operator. We assume that $A$ generates a $C_0$ semigroup on $\mathcal{H}$. Called history segment, $u_t:[-\tau,0]\to \mathcal{H}$ is defined by $u_t(s):=u(t+s)$. The delay operator $\Phi$ is a bounded operator from $H^1([-\tau,0];\mathcal{H})$ to $\mathcal{H}$, where $H^1([-\tau,0];\mathcal{H}):=W^{1,2}([-\tau,0];\mathcal{H})$ is a Sobolev space defined in the usual sense. 
\begin{definition}\label{def:classical_solution}
    A solution $u:[-\tau,\infty)\to \mathcal{H}$ of (\ref{eq:delay_equation}) is called a classical solution if \\
    \noindent $\textbf{(a)}$ $u(t)\in C([-\tau,\infty];\mathcal{H})\cap C^1([0,\infty);\mathcal{H})$,\\
    \noindent $\textbf{(b)}$ $u(t)$ satisfies (\ref{eq:delay_equation}) for all $t\geq 0$,\\
    \noindent $\textbf{(c)}$ $u(t)\in \mathrm{Dom}(A)$ and $u_t(s)\in H^1([-\tau,0];\mathcal{H})$ for all $t\geq 0$. 
\end{definition}
If there exists a function $\eta:[-\tau,0]\to \mathcal{L}(\mathcal{H})$ of bounded variation such that $\Phi$ is defined by Riemann-Stieltjes integral
\begin{equation}\label{eq:phi}
    \Phi f=\int_{-\tau}^{0}\di \eta(s)f(s),
\end{equation}
for all $f\in C([-\tau,0];\mathcal{H})$, we can obtain the well-posedness of the abstract delay equation (\ref{eq:delay_equation}) (for details, see \cite{batkai1}). More precisely, we could obtain a one-to-one correspondence between solutions of (\ref{eq:delay_equation}) and solutions of a linear system on the product Hilbert space $\mathcal{S}=\mathcal{H}\times L^2([-\tau,0];\mathcal{H})$:
\begin{equation}\label{eq:delay_equation2}
    \frac{\di}{\di t}\mathcal{U}(t)=\mathcal{A}\mathcal{U}(t):=
    \begin{pmatrix}
        A & \Phi\\
        0 & \frac{\di}{\di s}
    \end{pmatrix}
    \mathcal{U}(t),
\end{equation}
where the linear operator $\mathcal{A}$ has a domain given by
$$
\text{Dom}(\mathcal{A})=\{(x,f)\in \text{Dom}(A)\times H^1([-\tau,0];\mathcal{H}) |f(0)=x\}.
$$
The correspondence theorem of (\ref{eq:delay_equation}) and (\ref{eq:delay_equation2}) is as following.
\begin{lemma}[\cite{batkai1}]\label{lemma:correspondence}
    Let $(x,f)\in \mathrm{Dom}(\mathcal{A})$ and $u(t)$ be a classical solution of (\ref{eq:delay_equation}). Then, $(u(t),u_t)$ is a classical solution of (\ref{eq:delay_equation2}). Conversely, let $\mathcal{U}(t)=(v(t),\omega(t))$ be a classical solution of (\ref{eq:delay_equation2}). Then, the function defined by
    \begin{equation}
        u(t)=\begin{cases}
            v(t),\ 0\leq t,\\
            f(t),\ -\tau\leq t<0,
        \end{cases}
    \end{equation}
    is a classical solution of (\ref{eq:delay_equation}).
\end{lemma}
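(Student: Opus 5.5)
The plan is to verify both directions of Lemma~\ref{lemma:correspondence} directly from Definition~\ref{def:classical_solution} and the definition of $\mathcal{A}$ in (\ref{eq:delay_equation2}). The first row of $\mathcal{A}$ reproduces (\ref{eq:delay_equation}). The second row, $\frac{\di}{\di t}\omega(t)=\frac{\di}{\di s}\omega(t)$ together with the boundary condition $\omega(t)(0)=v(t)$ coming from $\mathrm{Dom}(\mathcal{A})$, is a transport equation on $[-\tau,0]$. Its solution is forced to be the left shift of the history, i.e.\ $\omega(t)=u_t$.

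\textbf{Forward direction.} Given a classical solution $u$ of (\ref{eq:delay_equation}) with $u_0=f$ and $u(0)=x$, set $\mathcal{U}(t)=(u(t),u_t)$. By (c), $u(t)\in\mathrm{Dom}(A)$ and $u_t\in H^1$, and $u_t(0)=u(t)$, so $\mathcal{U}(t)\in\mathrm{Dom}(\mathcal{A})$. Continuity of $t\mapsto u_t$ in $L^2([-\tau,0];\mathcal{H})$ follows from uniform continuity of $u$ on compact intervals.

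Next I would show that $t\mapsto u_t$ is $C^1$ into $L^2$ with derivative $\frac{\di}{\di s}u_t=(u')_t$. The difference quotient $\frac{u_{t+h}-u_t}{h}(s)=\frac1h\int_{t+s}^{t+s+h}u'(r)\di r$ converges in $L^2$ to $u'(t+\cdot)$, using the $L^2$-continuity of translations of $u'\in L^2_{loc}$. On $[-\tau,0)$ this uses $u\in H^1$, which is guaranteed by $u_t\in H^1$. Combined with (b), the first component gives $\frac{\di}{\di t}u(t)=Au(t)+\Phi u_t$, so $\mathcal{U}$ solves (\ref{eq:delay_equation2}) and is $C^1$ in $\mathcal{S}$.

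\textbf{Converse direction.} Let $\mathcal{U}=(v,\omega)$ be a classical solution with $\mathcal{U}(0)=(x,f)$. The key step, and the main obstacle, is to prove that $\omega(t)=u_t$ with $u$ the concatenation of $f$ and $v$. I would fix $t>0$ and consider $w(r):=\omega(r)(s+t-r)$ in a weak sense. Concretely, for a test function $\phi\in C_c^\infty$, I would differentiate $r\mapsto\int\omega(r)(\sigma)\,\phi(\sigma+r)\di\sigma$ and use $\partial_r\omega=\partial_\sigma\omega$. The derivative vanishes after an integration by parts, except for the boundary term at $\sigma=0$, where $\omega(r)(0)=v(r)$ enters. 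This yields the characteristics formula
\[
\omega(t)(s)=\begin{cases} f(t+s), & t+s<0,\\ v(t+s), & t+s\ge 0.\end{cases}
\]
An alternative route is uniqueness for the generator $\frac{\di}{\di s}$ with inhomogeneous boundary data, by subtracting the candidate $u_t$ and applying the nilpotent left-shift semigroup on $L^2([-\tau,0];\mathcal{H})$ with zero inflow.

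Once $\omega(t)=u_t$ is established, the first component gives $v'(t)=Av(t)+\Phi u_t$, which is (b). Continuity of $u$ at $t=0$ follows from $f(0)=x=v(0)$ together with $f\in H^1\subset C$, giving (a). Finally, $u_t=\omega(t)\in H^1$ and $u(t)=v(t)\in\mathrm{Dom}(A)$, giving (c).
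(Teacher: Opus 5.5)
The paper does not prove this lemma; it quotes it from B\'atkai--Piazzera \cite{batkai1} and uses it as a black box. Your argument therefore has to stand on its own, and it does.

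In the forward direction the key point is that $u\in H^1_{\mathrm{loc}}([-\tau,\infty);\mathcal{H})$. This holds because $u_0=f\in H^1$, $u\in C^1([0,\infty);\mathcal{H})$, and $u$ is continuous at $0$. Continuity of translations in $L^2$ then gives $t\mapsto u_t\in C^1([0,\infty);L^2([-\tau,0];\mathcal{H}))$ with derivative $\frac{\di}{\di s}u_t$, taken as a right derivative at $t=0$.

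In the converse, state explicitly that the test function must be supported in $(t-\tau,t)$. For $y\in\mathcal{H}$, the integration by parts leaves two boundary terms, $(v(r),y)\phi(r)-(\omega(r)(-\tau),y)\phi(r-\tau)$. The second term vanishes for every $r\in[0,t]$ only under this support condition. With it you obtain $\int_{-\tau}^0(\omega(t)(\sigma),y)\phi(t+\sigma)\di\sigma=\int_{t-\tau}^{t}(u(\rho),y)\phi(\rho)\di\rho$, which identifies $\omega(t)=u_t$ on all of $[-\tau,0]$.

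Your alternative route is the more economical one:
\begin{enumerate}
\item Set $z(t):=\omega(t)-u_t$. Then $z(t)(0)=v(t)-u(t)=0$, so $z(t)\in\mathrm{Dom}(A_L)$.
\item By the forward-direction computation applied to the concatenated $u\in H^1_{\mathrm{loc}}$, $z$ is $C^1$ with $z'=A_Lz$.
\item Since $z(0)=f-f=0$, uniqueness of classical solutions for a generator gives $z\equiv 0$.
\end{enumerate}
The test-function computation is elementary and avoids semigroup theory, while the subtraction argument reuses the first half of your proof.
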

When (\ref{eq:phi}) is satisfied, $\mathcal{A}$ is an infinitesimal generator that generates a $C_0$ semigroup $\{\mathcal{T}(t)\}_{t\geq0}$ on $\mathcal{S}$. It follows from Lemma \ref{lemma:correspondence} that the classical solution of the delay equation can be written as 
\begin{equation}\label{eq:projection}
    u(t)=\begin{cases}
        \pi_1\circ\mathcal{T}(t)(x,f),\ 0\leq t,\\
        f(t),\ -\tau\leq t<0,
    \end{cases}
\end{equation}
where $\pi_1:\mathcal{S}\to \mathcal{H}$ denotes a projection map and $(x,f)\in \text{Dom}(\mathcal{A})$.  

It is easy to verify that (\ref{eq:projection}) is well defined when $(x,f)$ take values in the whole state space $\mathcal{S}$. In this case, (\ref{eq:projection}) is called a mild solution of the delay equation. 
\subsection{Spectral properties of the infinitesimal generator $\mathcal{A}$}
It is known that the spectrum of a linear operator can be regarded as a set of singularities of its corresponding resolvent. To obtain the spectral properties of $\mathcal{A}$, we calculate the following linear equations:
\begin{equation}\label{eq:delay_resolvent_calculate}
\begin{pmatrix}
    \lambda-A & -\Phi\\
    0 & \lambda-\frac{\di}{\di s}
\end{pmatrix}
\begin{pmatrix}
    x\\
    f
\end{pmatrix}
    =\begin{pmatrix}
        y\\
        g
    \end{pmatrix},
\end{equation}
where $\Phi=\int_{-\tau}^0\di \eta(s)$.
We obtain
$$
f =e^{\lambda s}x-\int_0^se^{\lambda(s-m)}g(m)\di m=e^{\lambda s}x+(\lambda-A_L)^{-1}g,
$$
where $x$ satisfies
$$
(\lambda -A-\int_{-\tau}^0e^{\lambda s}\di \eta(s))x=y+\int_{-\tau}^0\di \eta(s)(\lambda-A_L)^{-1}[g](s). 
$$
$A_L$ denotes the infinitesimal generator of the nilpotent left shift semigroup on $L^2([-\tau,0];\mathcal{H})$, $L(t)$. More precisely, $L(t)$ is defined by
$$
    L(t)[f](s)=\begin{cases}
        0,\ -t<s\leq 0,\\
        f(t+s),\ -\tau\leq s\leq -t.
    \end{cases}
$$
\begin{remark}
    It is easy to verify that 
    \begin{equation}
        A_L [f](s)=\frac{\di}{\di s}f(s),
    \end{equation}
    and $\text{Dom}(A_L)=\{f\in H^1([-\tau,0];\mathcal{H})|f(0)=0)\}$. The resolvent operator of $A_L$ can be obtained as 
    \begin{equation}
        (\lambda-A_L)^{-1}[f](s)=-\int_0^s e^{\lambda(s-m)}f(m)\di m.
    \end{equation}
    The resolvent exists for any $\lambda\in \mathbb{C}$ and it is easy to verify that the spectrum of $A_L$ is empty set ($\sigma(A_L)=\emptyset$).
    
\end{remark}
Denote by $\varepsilon_\lambda$ an exponential function of $s\in [-\tau,0]$: $\varepsilon_\lambda:=e^{\lambda s}$. 
For simplicity, Let $\Delta(\lambda)$ be a linear operator on $\mathcal{H}$ defined by
\begin{equation}
    \Delta(\lambda)x:=(\lambda -A-\Phi(\varepsilon _\lambda\otimes \id))x=(\lambda-A)x-\int_{-\tau}^0e^{\lambda s}\di \eta(s)x. 
\end{equation}
Here $\Phi(\varepsilon_\lambda\otimes \id):\mathcal{H}\to \mathcal{H}$ is a bounded operator. Moreover, from the properties of $L(t)$ and its generator $A_L$, it is clear that $\Phi(\varepsilon_\lambda\otimes \id)$ is an operator-valued holomorphic function with respect to $\lambda$. We can obtain the following lemma. 
\begin{lemma}[\cite{batkai1}]\label{lemma:delay_generalized_retarded}
    For $\lambda\in \mathbb{C}$, we have $\lambda\in \varrho(\mathcal{A})$ if and only if $\lambda\in \varrho(A+\Phi(\varepsilon_\lambda \otimes \id))$. Moreover, for $\lambda\in \varrho(\mathcal{A})$, the resolvent $(\lambda-\mathcal{A})^{-1}$ is given by
    \begin{equation}\label{eq:resolvent_A}
    (\lambda-\mathcal{A})^{-1}=
        \begin{pmatrix}
            \Delta(\lambda)^{-1}& \Delta(\lambda)^{-1}\Phi (\lambda-A_L)^{-1}\\
            \varepsilon_\lambda\otimes \Delta(\lambda)^{-1} & [\varepsilon_\lambda\otimes \Delta(\lambda)^{-1}+\id](\lambda-A_L)^{-1}
        \end{pmatrix}.
    \end{equation}
\end{lemma}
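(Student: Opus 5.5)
The plan is to solve the system (\ref{eq:delay_resolvent_calculate}) explicitly and to check that, whenever $\Delta(\lambda)$ is boundedly invertible on $\mathcal{H}$, the resulting solution operator is a bounded two-sided inverse of $\lambda-\mathcal{A}$ given by (\ref{eq:resolvent_A}). For the converse, we show that invertibility of $\lambda-\mathcal{A}$ forces invertibility of $\Delta(\lambda)$. Throughout we use $\sigma(A_L)=\emptyset$, so $(\lambda-A_L)^{-1}$ is a bounded operator on $L^2([-\tau,0];\mathcal{H})$ with range in $\{f\in H^1: f(0)=0\}$, for every $\lambda\in\mathbb{C}$.

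\emph{Sufficiency.} Suppose $\lambda\in\varrho(A+\Phi(\varepsilon_\lambda\otimes\id))$, i.e. $\Delta(\lambda)^{-1}\in\mathcal{L}(\mathcal{H})$ with range $\mathrm{Dom}(A)$. Given $(y,g)\in\mathcal{S}$, set
\[
x:=\Delta(\lambda)^{-1}\bigl(y+\Phi(\lambda-A_L)^{-1}g\bigr),\qquad f:=\varepsilon_\lambda\otimes x+(\lambda-A_L)^{-1}g .
\]
Since $(\lambda-A_L)^{-1}g\in H^1$ and $\Phi:H^1\to\mathcal{H}$ is bounded, $x$ is well defined and lies in $\mathrm{Dom}(A)$. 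Moreover $f\in H^1$ and $f(0)=x$, because $\varepsilon_\lambda(0)=1$ and $(\lambda-A_L)^{-1}g$ vanishes at $0$. Hence $(x,f)\in\mathrm{Dom}(\mathcal{A})$.

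Next verify the two rows of (\ref{eq:delay_resolvent_calculate}):
\begin{itemize}
\item Second row: $\lambda f-f'=g$, since $(\lambda-\tfrac{\di}{\di s})\varepsilon_\lambda=0$ and $(\lambda-\tfrac{\di}{\di s})(\lambda-A_L)^{-1}g=g$.
\item First row: $(\lambda-A)x-\Phi f=\Delta(\lambda)x-\Phi(\lambda-A_L)^{-1}g=y$, using linearity of $\Phi$ and $\Phi(\varepsilon_\lambda\otimes x)=\Phi(\varepsilon_\lambda\otimes\id)x$.
\end{itemize}
Reading off the dependence of $x$ and $f$ on $(y,g)$ gives exactly the matrix (\ref{eq:resolvent_A}). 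Every entry is bounded: $\varepsilon_\lambda\otimes\cdot$ maps $\mathcal{H}$ boundedly into $L^2$, and $\Phi(\lambda-A_L)^{-1}$ is bounded from $L^2$ into $\mathcal{H}$ since $(\lambda-A_L)^{-1}$ is bounded $L^2\to H^1$ by the explicit formula.

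\emph{Injectivity.} If $(\lambda-\mathcal{A})(x,f)=0$, the second row gives $f=\varepsilon_\lambda\otimes x$ via the boundary condition $f(0)=x$. The first row then reduces to $\Delta(\lambda)x=0$, so $x=0$ and hence $f=0$. Therefore $\lambda\in\varrho(\mathcal{A})$ with the stated resolvent.

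\emph{Necessity.} Suppose $\lambda\in\varrho(\mathcal{A})$.
\begin{itemize}
\item Injectivity of $\Delta(\lambda)$: if $\Delta(\lambda)x=0$, then $(x,\varepsilon_\lambda\otimes x)\in\ker(\lambda-\mathcal{A})$, so $x=0$.
\item Surjectivity of $\Delta(\lambda)$: for $y\in\mathcal{H}$, solve $(\lambda-\mathcal{A})(x,f)=(y,0)$. The second row together with $f(0)=x$ forces $f=\varepsilon_\lambda\otimes x$, and then the first row reads $\Delta(\lambda)x=y$.
\end{itemize}
Thus $\Delta(\lambda)^{-1}=\pi_1(\lambda-\mathcal{A})^{-1}(\cdot,0)$, which is bounded. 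Closedness of $\Delta(\lambda)$ follows from closedness of $A$ plus a bounded perturbation, so $\lambda$ lies in the resolvent set in the usual sense.

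The main care point is the regularity and boundary bookkeeping. We must confirm that $\Phi$ is only applied to $H^1$ functions, and that the form (\ref{eq:phi}) of $\Phi$ on continuous functions extends consistently to $H^1$, which embeds into $C$. The rest is direct computation.
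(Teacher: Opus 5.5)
Your argument is correct and is essentially the paper's route: the paper (following \cite{batkai1}) solves the system (\ref{eq:delay_resolvent_calculate}) directly to get $f=\varepsilon_\lambda\otimes x+(\lambda-A_L)^{-1}g$ and $\Delta(\lambda)x=y+\Phi(\lambda-A_L)^{-1}g$, exactly as you do, and your checks of the converse direction and of the boundedness of each entry supply what the paper leaves to the citation. One correction: your formulas give the bottom-right entry $[\varepsilon_\lambda\otimes\Delta(\lambda)^{-1}\Phi+\id](\lambda-A_L)^{-1}$, whereas the printed entry omits $\Phi$ and is ill-typed, because $(\lambda-A_L)^{-1}g$ lies in $L^2([-\tau,0];\mathcal{H})$ rather than in $\mathcal{H}$; so instead of claiming an exact match you should point out this typo in the statement, which the paper's own $LMR$ factorization also confirms.
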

From Lemma \ref{lemma:delay_generalized_retarded}, we know that even though the linear operator $\mathcal{A}$ defines a flow on a product Hilbert space $\mathcal{S}$, the spectrum of $\mathcal{A}$ can be solely determined by singularities of a linear operator $\Delta(\lambda)^{-1}$ on $\mathcal{H}$. It is convenient to call $\Delta(\lambda)^{-1}:\mathcal{H}\to \mathcal{H}$ the retarded resolvent operator \cite{nakagiri2}.  

Specifically, we assume $(y,g)=0$ in (\ref{eq:delay_resolvent_calculate}). Then, we know that $\lambda\in \mathbb{C}$ is an eigenvalue of $\mathcal{A}$ if there exists some $x\in \mathcal{H}$ that solves the characteristic equation
\begin{equation}
    \Delta(\lambda)x=0.
\end{equation}
To further illustrate a classification of spectrum of $\mathcal{A}$, it is worth noticing a decomposition of (\ref{eq:resolvent_A}):
$$
(\lambda-\mathcal{A})^{-1}=LMR=\begin{pmatrix}
    \id & 0\\
    \varepsilon_\lambda\otimes \id & \id
\end{pmatrix}
\begin{pmatrix}
    \Delta(\lambda)^{-1} & 0\\
    0 & (\lambda-A_L)^{-1}
\end{pmatrix}
\begin{pmatrix}
    \id & \Phi (\lambda-A_L)^{-1}\\
    0 & \id
\end{pmatrix}.
$$
Here, $L$ and $M$ are bounded and invertible on $\mathcal{S}$. Combined with this fact and the boundedness of $(\lambda-A_L)^{-1}$, we can prove that $\lambda\in \mathbb{C}$ belongs to the point/continuous/residual spectrum of $\mathcal{A}$ if and only if $\lambda$ belongs to the the point/continuous/residual spectrum of $A+\Phi(\varepsilon_\lambda\otimes \id)$. 
\begin{lemma}
    We have the following.\\
    \noindent $\mathbf{(a)}$ $\lambda\in \sigma_p(\mathcal{A})$ if and only if $\Delta(\lambda)$ is not injective,\\
    \noindent $\mathbf{(b)}$ $\lambda\in \sigma_c(\mathcal{A})$ if and only if $\mathrm{Dom}(\Delta(\lambda)^{-1})$ is a proper dense subspace of $\mathcal{H}$,\\
    \noindent $\mathbf{(c)}$ $\lambda\in \sigma_r(\mathcal{A})$ if and only if $\mathrm{Dom}(\Delta(\lambda)^{-1})$ is not dense,\\
    \noindent $\mathbf{(d)}$ $\lambda\in \sigma_{ess}(\mathcal{A})$ if and only if $\Delta(\lambda)$ is not Fredholm. 
\end{lemma}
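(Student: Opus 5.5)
The four equivalences all follow from the factorization $(\lambda-\mathcal{A})^{-1}=LMR$ stated just before the lemma, used at the level of operators rather than inverses. Solving (\ref{eq:delay_resolvent_calculate}) directly gives, on $\mathrm{Dom}(\mathcal{A})$,
\[
\lambda-\mathcal{A}=R^{-1}\,\begin{pmatrix}\Delta(\lambda)&0\\0&\lambda-A_L\end{pmatrix}\,L^{-1},
\qquad
L^{-1}=\begin{pmatrix}\id&0\\-\varepsilon_\lambda\otimes\id&\id\end{pmatrix},\quad
R^{-1}=\begin{pmatrix}\id&-\Phi(\lambda-A_L)^{-1}\\0&\id\end{pmatrix}.
\]
The first step is to check this identity together with the domains. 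For $(x,f)\in\mathrm{Dom}(\mathcal{A})$ we have $L^{-1}(x,f)=(x,f-\varepsilon_\lambda x)$. Since $f(0)=x$, the second component $f-\varepsilon_\lambda x$ lies in $\mathrm{Dom}(A_L)$, so $L^{-1}$ maps $\mathrm{Dom}(\mathcal{A})$ bijectively onto $\mathrm{Dom}(\Delta(\lambda))\times\mathrm{Dom}(A_L)$, with $\mathrm{Dom}(\Delta(\lambda))=\mathrm{Dom}(A)$. The operator $R^{-1}$ is bounded on $\mathcal{S}$, because $\Phi$ is bounded from $H^1$ and $(\lambda-A_L)^{-1}$ maps $L^2$ boundedly into $H^1$. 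It is invertible, being unipotent upper triangular. Likewise $L^{\pm1}$ are bounded and invertible on $\mathcal{S}$.

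Write $D(\lambda)=\mathrm{diag}(\Delta(\lambda),\lambda-A_L)$. Then $\lambda-\mathcal{A}=R^{-1}D(\lambda)L^{-1}$, where both outer factors are bounded isomorphisms of $\mathcal{S}$ and $L^{-1}$ preserves the relevant domains. Hence:

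\textbf{(a)} $\ker(\lambda-\mathcal{A})=L\ker D(\lambda)$. Since $\lambda-A_L$ is bijective for every $\lambda$ (because $\sigma(A_L)=\emptyset$), $\ker D(\lambda)=\ker\Delta(\lambda)\times\{0\}$. Therefore $\lambda-\mathcal{A}$ is injective if and only if $\Delta(\lambda)$ is.

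\textbf{(b), (c)} $\mathrm{Ran}(\lambda-\mathcal{A})=R^{-1}\bigl(\mathrm{Ran}\,\Delta(\lambda)\times L^2([-\tau,0];\mathcal{H})\bigr)$, using again that $\lambda-A_L$ is surjective. A homeomorphism preserves density and properness of subspaces. So the range of $\lambda-\mathcal{A}$ is dense, respectively proper, if and only if $\mathrm{Dom}(\Delta(\lambda)^{-1})=\mathrm{Ran}\,\Delta(\lambda)$ is dense, respectively proper. Continuous spectrum means injective with dense proper range (boundedness of the inverse follows by closedness and the closed graph theorem). Residual spectrum means injective with non-dense range. The classification therefore transfers, provided injectivity of $\Delta(\lambda)$ is read as part of (b) and (c), consistent with the usual trichotomy. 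Boundedness of the inverse transfers too, since $(\lambda-\mathcal{A})^{-1}=LMR$ with $M$ containing $\Delta(\lambda)^{-1}$ and the bounded $(\lambda-A_L)^{-1}$.

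\textbf{(d)} The main point to handle is the Fredholm property, because $\lambda-\mathcal{A}$ and $D(\lambda)$ act between different domains. I would equip $\mathrm{Dom}(\mathcal{A})$ and $\mathrm{Dom}(A)\times\mathrm{Dom}(A_L)$ with their graph norms. Then $L^{-1}$ is an isomorphism between these Banach spaces; boundedness in the graph norms follows from the formula for $\mathcal{A}$ and the smoothness of $\varepsilon_\lambda$. Composition with isomorphisms preserves being Fredholm, together with the dimensions of kernel and cokernel. So $\lambda-\mathcal{A}$ is Fredholm if and only if $D(\lambda)$ is. Since $\lambda-A_L$ is an isomorphism $\mathrm{Dom}(A_L)\to L^2$, the latter holds if and only if $\Delta(\lambda)$ is Fredholm.

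I expect the domain bookkeeping in this last step, namely the graph-norm continuity of $L^{-1}$ and of its inverse $(x,h)\mapsto(x,h+\varepsilon_\lambda x)$, to be the only genuinely delicate point. It reduces to $\|\varepsilon_\lambda x\|_{H^1}\le C\|x\|$, which is immediate.
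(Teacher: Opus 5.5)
Your proposal is correct and is essentially the paper's argument: the paper supports the lemma only by the triangular factorization $(\lambda-\mathcal{A})^{-1}=LMR$ with bounded invertible outer factors, and you use the same factorization at the operator level, $\lambda-\mathcal{A}=R^{-1}D(\lambda)L^{-1}$, which has the advantage of being meaningful at points of the spectrum (where the resolvent does not exist) and of also covering (d), which the paper's sketch leaves implicit. In (d) you can even skip the graph-norm bookkeeping, since your identities $\ker(\lambda-\mathcal{A})=L\ker D(\lambda)$ and $\mathrm{Ran}(\lambda-\mathcal{A})=R^{-1}\bigl(\mathrm{Ran}\,\Delta(\lambda)\times L^2([-\tau,0];\mathcal{H})\bigr)$ already transfer the finite-dimensionality of the kernel, the closedness of the range, and its codimension.
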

It is known that solution of the Cauchy problem of (\ref{eq:delay_equation2}) can be obtained by applying inverse Laplace transform to $(\lambda-\mathcal{A})^{-1}$. The solution of (\ref{eq:delay_equation}) which initializes at $f\in H^1([-\tau,0];\mathcal{H})$ with $x=f(0)$ can be represented by
$$
u(t)=\pi_1\circ\frac{1}{2\pi i}\int_\Gamma e^{\lambda t}(\lambda-\mathcal{A})^{-1}(x,f)^\top\di \lambda,
$$
where the integral contour $\Gamma$ is on the right of $\sigma(\mathcal{A})$. Equivalently, we have the following inverse Laplace transform formula of $u(t)$
\begin{equation}
    u(t)=\frac{1}{2\pi i}\int_\Gamma e^{\lambda t}\Delta(\lambda)^{-1}x\di \lambda+\frac{1}{2\pi i}\int_\Gamma e^{\lambda t}\Delta(\lambda)^{-1}\Phi (\lambda-A_L)^{-1}f\di \lambda.
\end{equation}
The following section will focus on the retarded resolvent and the asymptotic behavior of $u(t)$ as $t\to \infty$. 
\section{Generalized spectral theory on Gelfand triple}\label{sec:3}
Throughout this section, we construct a generalized spectral theory on the following linear evolution equation with a single time lag
\begin{equation}\label{eq:aFDE}
    \frac{\di}{\di t}u(t)= iHu(t)+Ku(t-\tau).
\end{equation}
Here, $H$ is a self-adjoint operator defined on a Hilbert space $\mathcal{H}$ and $K:\mathcal{H}\to \mathcal{H}$ is bounded.

It will be shown that, under some analyticity condition of the spectral measure of $(H, \mathrm{dom}(H))$, the retarded resolvent operator $\Delta(\lambda)^{-1}:\mathcal{H}\to \mathcal{H}$ as introduced in the previous section can be generalized through a Gelfand triple.

It is verified that the continuous spectrum on the imaginary axis can "disappear", leaving only discrete generalized eigenvalues on a new Riemann surface induced by the Gelfand triple. Thanks to this discovery, the obstacles in analysis of asymptotics of solutions caused by those continuous singularities can be overcome. 
\subsection{Gelfand triples}
Let $X$ be a Hausdorff locally convex topological vector space over $\mathbb{C}$. We denote by $X^\prime$ a collection of all continuous anti-linear functionals on $X$. On the (anti-)dual space $X^\prime$, a strong topology (strong $^\star$/dual topology) and a weak topology (weak $^\star$/dual topology) can be induced by \cite{Treves,SW}:
\begin{equation}
\begin{aligned}
    &u_i \stackrel{\mathrm{weak}}{\to} u \stackrel{def}{\Longleftrightarrow} \langle u_i|\varphi\rangle\to \langle u|\varphi\rangle, \quad \forall \varphi\in X, \\
    &u_i \stackrel{\mathrm{strong}}{\to} u \stackrel{def}{\Longleftrightarrow}\langle u_i|\varphi\rangle\to \langle u|\varphi\rangle, \quad \text{uniformly on each bounded subset of }X.
\end{aligned}
\end{equation}
Here, the bracket $\langle u|\cdot\rangle:X\to \mathbb{C}$ for each $u\in X^\prime$ refers to an anti-linear map associated with the dual pair $(X^\prime,X)$.

It is clear that according to Riesz's representation theorem, each Hilbert space $\mathcal{H}$ is isomorphic to its dual space. Hence, for any dense subspace $X\subset\mathcal{H}$ endowed with a stronger topology, we can define the corresponding Gelfand triple based on the fact $\mathcal{H}\cong\mathcal{H}^\prime \subset X^\prime$.
\begin{definition}\label{def:Gelfand triple}
    Let $(\mathcal{H},(\cdot,\cdot))$ be a complex Hilbert space with inner product $(\cdot,\cdot)$. If a dense subset $X\subset \mathcal{H}$ is a Hausdorff locally convex topological vector space and its topology is stronger than $\mathcal{H}$, the triple 
    \begin{equation}
        X\subset \mathcal{H}\subset{X^\prime}
    \end{equation}
    is called a Gelfand triple or a rigged Hilbert space associated with the Hilbert space $\mathcal{H}$. Denote by $\kappa:X\to X^\prime$ the canonical inclusion is defined as
    \begin{equation}
        \langle \kappa(u)|\varphi\rangle=(u,\varphi),\quad \forall \varphi\in X. 
    \end{equation}
    It is easy to verify that the inclusion map is continuous with respect to both strong and weak dual topology.
\end{definition}
\subsection{Generalization of the characteristic equation}

The spectral theory of self-adjoint operators directly implies that the spectrum of $H$ lies on the real axis ($\sigma(H)\subset \mathbb{R}$) and $H$ has a spectral decomposition \cite{davies}\\
$$H=\int_{-\infty}^{+\infty}\omega \di E(\omega).$$
Here, $E(\cdot)$ is an operator-valued measure function called the spectral measure.

We fix the following notations: $I\subset\mathbb{R}$ is a connected interval such that $\sigma(H)\subset I$; 
$\mathbb{C_+}$ ($\mathbb{C_-}$, resp.):=$\{\lambda\in \mathbb{C}|\Im(\lambda)> 0 \ (\Im(\lambda)< 0, \ \text{resp.})\}$; $\Omega:= \mathbb{C_-}\cup I\cup \mathbb{C_+}$.

Like \cite{chibaB}, the following standing hypotheses are required to construct a generalized spectral theory:

\noindent \textbf{(H1)} $X$ is a dense subspace of $\mathcal{H}$ and it has a stronger topology.\\
\noindent \textbf{(H2)} $X$ is a quasi-complete barreled space.\\ 
\noindent \textbf{(H3)} The spectral measure of $H$ is absolutely continuous on $I$ and its density function\\ 
\begin{equation}
    E[\psi,\varphi](\omega):=\frac{\di (E(\omega)\psi,\varphi)}{\di \omega}
\end{equation} has an analytic continuation across $I$ to the upper complex half-plane $\mathbb{C}_+$.\\
\noindent \textbf{(H4)} For each $\lambda\in \mathbb{C_+}\cup I$, the bilinear form $E[\cdot,\cdot](\lambda): X\times X\to \mathbb{C}$ is separately continuous.

\begin{remark}
    \textbf{(H1)} here ensures that $X\subset \mathcal{H}\subset X^\prime$ forms a Gelfand triple in the sense of Definition \ref{def:Gelfand triple}. From \textbf{(H2)}, $X^\prime$-valued integrals are well defined in a weak sense (known as weak$^\ast$ Pettis integrals or Gelfand integrals). Moreover, Laurent series expansion and residue theorem are applicable for $X^\prime$-valued functions/integrals \cite[Appendix]{chibaB}. From absolute continuity of the spectral measure, we know that $\lambda-H$ is injective for each $\lambda\in \mathbb{R}$. 
\end{remark}

The Stone's theorem implies that $ iH$ generates a $C_0$ group on $\mathcal{H}$. As shown in Section~\ref{sec:2}, there is a one-to-one correspondence between solutions of the delay equation (\ref{eq:aFDE}) and solutions of a linear system defined on a state space $\mathcal{S}=\mathcal{H}\times L^2([-\tau,0];\mathcal{H})$:
\begin{equation}\label{eq:acp}
    \frac{\di}{\di t}
    \begin{pmatrix}
        x\\
        f
    \end{pmatrix}
    =\mathcal{A}\begin{pmatrix}
        x\\
        f
    \end{pmatrix}=
    \begin{pmatrix}
         iH & \hat{K}\\
        0 & \frac{\di}{\di s}
    \end{pmatrix}
    \begin{pmatrix}
        x\\
        f
    \end{pmatrix},
\end{equation}
with $\text{Dom}(\mathcal{A})=\{(x,f)\in \text{Dom}(H)\times H^1([-\tau,0];\mathcal{H})|f(0)=x\}$. $\hat{K}f=\int_{-\tau}^{0}\di\eta(s)f(s)$ and $\eta:[-\tau,0]\to \mathcal{L}(\mathcal{H})$ is given by
\begin{equation}
\eta(s) =
\begin{cases}
    0, \quad \text{others},\\
    K,\quad s=-\tau.
\end{cases}
\end{equation}
Hence, $\lambda$ is an eigenvalue of $\mathcal{A}$ if and only if there exists some nonzero $x\in \text{Dom}(H)$ which solves the characteristic equation
\begin{equation}\label{eq:delta}
    \Delta(\lambda)x=(\lambda- iH-e^{-\lambda\tau}K)x=0.
\end{equation}
From \textbf{(H3)}, it is clear that the above equation is equivalent to 
\begin{equation}\label{eq:eigeneq_H}
    (\id-e^{-\lambda\tau}(\lambda- iH)^{-1}K)x=0.
\end{equation}
Firstly, We are in a position to generalize the resolvent operator of $ iH:\mathcal{H}\to \mathcal{H}$ using the Gelfand triple $X\subset \mathcal{H}\subset X^\prime$. When $\lambda$ takes values in the right complex half-plane, we can define $(\lambda- iH)^{-1}:\kappa X\to X^\prime$ by 
\begin{equation}\label{eq:cauchy_integral}
    \langle(\lambda- iH)^{-1}\kappa(\psi)|\varphi\rangle:=((\lambda- iH)^{-1}\psi,\varphi)=\int_{-\infty}^{+\infty}\frac{1}{\lambda- i\omega}E[\psi,\varphi](\omega)\di \omega,
\end{equation}
for each $\psi,\varphi\in X$. 

When the density function $E[\psi,\varphi](\omega)$ has an analytic continuation to $I\cup\mathbb{C}_+$, we can define $A(\lambda):\kappa X\to X^\prime$ by
\begin{equation}\label{eq:Alambda}
    \langle A(\lambda)\kappa(\psi)|\varphi\rangle:=
    \begin{cases}
    \int_{-\infty}^{+\infty}\frac{1}{\lambda- i\omega}E[\psi,\varphi](\omega)\di \omega+2\pi E[\psi,\varphi](\frac{\lambda}{ i}), \quad \Re(\lambda)<0,\\
    \lim_{\Re(\lambda)\to0+}\int_{-\infty}^{+\infty}\frac{1}{\lambda- i\omega}E[\psi,\varphi](\omega)\di \omega, \quad \lambda\in iI,\\
        \int_{-\infty}^{+\infty}\frac{1}{\lambda- i\omega}E[\psi,\varphi](\omega)\di \omega, \quad \Re(\lambda)>0,\\
    \end{cases}
\end{equation}
for each $\psi,\varphi\in X$. It is easy to verify that $\langle A(\lambda)i(\psi)|\varphi\rangle$ is holomorphic and extends $((\lambda- iH)^{-1}\psi,\varphi)$ to the left complex half-plane. 
\begin{remark}
    Indeed, by applying change of variable $\zeta=\frac{\lambda}{ i}$, we rewrite (\ref{eq:cauchy_integral}) as 
    $$
        \langle(\lambda- iH)^{-1}\kappa(\psi)|\varphi\rangle=- iI(\zeta):=- i\int_{-\infty}^{+\infty}\frac{1}{\zeta-\omega}E[\psi,\varphi](\omega)\di \omega.
    $$
    The Cauchy-type integral $I(\zeta)$ is holomorphic on both $\mathbb{C}_+$ and $\mathbb{C}_-$. Moreover, we have
    $$
    \begin{aligned}
    \lim_{\Im(\zeta)\to0\mp}I(\zeta)=\mathrm{p.v.}\int_{-\infty}^{+\infty}\frac{1}{\zeta-\omega}E[\psi,\varphi](\omega)\di \omega\mp i\pi E[\psi,\varphi](\zeta).\\
    \end{aligned}
    $$
    Therefore, when $(\lambda- iH)^{-1}$ is extended to the left half-plane, there is a jump $2\pi E[\psi,\varphi](\frac{\lambda}{ i})$ appeared on the original formulation. Henceforth, we will call $A(\lambda)$ the analytic continuation of $(\lambda- iH)^{-1}$.
\end{remark}
\begin{lemma}
    $A(\lambda)\kappa:X\to X^\prime$ is continuous when $X^\prime$ is endowed with the weak dual topology.
\end{lemma}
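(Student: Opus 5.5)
Fix $\lambda$. We must show that if a net $\psi_i\to\psi$ in $X$, then $\langle A(\lambda)\kappa(\psi_i)|\varphi\rangle\to\langle A(\lambda)\kappa(\psi)|\varphi\rangle$ for every $\varphi\in X$. Since $A(\lambda)\kappa$ is linear, it suffices to show that, for each fixed $\varphi\in X$, the anti-linear functional $\psi\mapsto\langle A(\lambda)\kappa(\psi)|\varphi\rangle$ is continuous on $X$. We split according to the three cases in (\ref{eq:Alambda}).

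\textbf{Case $\Re(\lambda)>0$.} Here $\langle A(\lambda)\kappa(\psi)|\varphi\rangle=((\lambda-iH)^{-1}\psi,\varphi)$, and $(\lambda-iH)^{-1}$ is bounded on $\mathcal{H}$. Hence
$$|\langle A(\lambda)\kappa(\psi)|\varphi\rangle|\le \Re(\lambda)^{-1}\|\psi\|\,\|\varphi\|.$$
By \textbf{(H1)}, the topology of $X$ is stronger than that of $\mathcal{H}$, so $\psi\mapsto\|\psi\|$ is continuous on $X$. This gives continuity.

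\textbf{Case $\Re(\lambda)<0$.} The integral term equals $((\lambda-iH)^{-1}\psi,\varphi)$, now with $\lambda$ in the left half-plane. Since $iH$ is skew-adjoint, $\lambda\in\varrho(iH)$, so this term is again bounded by $|\Re\lambda|^{-1}\|\psi\|\|\varphi\|$ and is continuous as above. The remaining term is $2\pi E[\psi,\varphi](\lambda/i)$. Here $\lambda/i=-i\lambda$ has positive imaginary part, so it lies in $\mathbb{C}_+$. By \textbf{(H4)}, this term is continuous in $\psi$ for fixed $\varphi$. The sum of two continuous functionals is continuous.

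\textbf{Case $\lambda\in iI$.} This is the main obstacle, because the defining formula is a limit and there is no direct bound. My plan is to use the holomorphy of $\lambda\mapsto\langle A(\lambda)\kappa(\psi)|\varphi\rangle$ across $iI$, which the paper asserts, together with barrelledness from \textbf{(H2)}.

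The key step is a representation at an interior point. For $\lambda_0\in iI$, the continuation is continuous across the line, so
$$\langle A(\lambda_0)\kappa(\psi)|\varphi\rangle=\lim_{n\to\infty}\langle A(\lambda_0-1/n)\kappa(\psi)|\varphi\rangle.$$
Each functional $\psi\mapsto\langle A(\lambda_0-1/n)\kappa(\psi)|\varphi\rangle$ is continuous on $X$ by the previous case. This family is pointwise bounded on $X$, since the limit exists for each $\psi$.

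Since $X$ is barreled, the Banach–Steinhaus theorem applies: the family is equicontinuous, and the pointwise limit of an equicontinuous sequence of continuous anti-linear functionals is continuous. Hence $\psi\mapsto\langle A(\lambda_0)\kappa(\psi)|\varphi\rangle$ is continuous.

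Alternatively, one could use the boundary formula of the Remark, $\mathrm{p.v.}$-integral $\pm i\pi E[\psi,\varphi]$, and handle the $E$ term by \textbf{(H4)}. That route still needs continuity of the principal value integral, which again reduces to a uniform boundedness argument, so I would prefer the Banach–Steinhaus route above.

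Combining the three cases gives weak-dual continuity of $A(\lambda)\kappa$ for every $\lambda$.
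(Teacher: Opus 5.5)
Your proof is correct and follows the paper's argument: the same three-case split, boundedness of the resolvent for $\Re\lambda>0$, \textbf{(H4)} for the jump term $2\pi E[\psi,\varphi](\lambda/i)$ when $\Re\lambda<0$, and a Banach--Steinhaus limit argument on the barreled space $X$ for $\lambda\in iI$. The differences are minor. The paper approximates $\lambda_0\in iI$ from the right half-plane, which is just the defining limit and so needs no continuity across the axis, whereas you approach from the left and rely on the asserted holomorphy. You also treat the integral term for $\Re\lambda<0$ explicitly, which the paper leaves implicit.
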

\begin{proof}
    When $\Re(\lambda)>0$, we have $\langle A(\lambda)\kappa(\psi)|\varphi\rangle=((\lambda-iH)^{-1}\psi,\varphi)$. The statement directly follows from the continuity of $(\lambda-iH)^{-1}:\mathcal{H}\to \mathcal{H}$ on the right half plane. When $\lambda\in iI$, from the holomorphy of $A(\lambda)$ it follows that
    $$
    \langle A(\lambda)\kappa(\psi)|\varphi\rangle=\lim_{k\to \infty}\langle A(\lambda_k)\kappa(\psi)|\varphi\rangle,
    $$
    for some sequence $\{\lambda_k\}_{k\in \mathbb{N}}$ on right half-plane. Continuity of $A(\lambda)\kappa$ holds from the Banach-Steinhaus theorem on barreled space. From \textbf{(H4)}, for each $\varphi\in X$ and $\lambda\in \mathbb{C}_+$, $E[\cdot,\varphi](\lambda)$ is continuous, which completes the proof. 
\end{proof}
To extend the definition of the characteristic equation (\ref{eq:eigeneq_H}), we have to assume the following duality conditions. We fix the following notations. Let $\Phi$ be a densely defined linear operator on $X$. Denote by $\Phi^\prime:X^\prime\to X^\prime$ the dual operator of $\Phi$, which satisfies 
\begin{equation}
    \langle \Phi^\prime f|g\rangle = \langle f|\Phi g\rangle,\quad \forall f\in \text{Dom}(\Phi^\prime),\ g\in \text{Dom}(\Phi).
\end{equation}
Let $\Psi$ be an operator on a Hilbert space $\mathcal{H}$. Denote by $\Psi^\ast$, the dual operator of $\Psi$ (in the Hilbert sense) is defined as 
\begin{equation}
    (\Psi^\ast f,g)=(f,\Psi g), \quad \forall f\in \text{Dom}(\Psi^\ast),\ g\in \text{Dom}(\Psi).
\end{equation}
For simplicity, we denote $\Psi^{\times}$ as the bidual operator of $\Psi$ (i.e. $\Psi^{\times}:=(\Psi^{\ast})^\prime$). It is easy to show $\Psi^{\times}$ extends $\Psi$ in the sense of $\Psi^{\times}|_{\kappa\text{Dom}(\Psi)}=\kappa \Psi\kappa^{-1}$.

Now, we assume that\\
\noindent \textbf{(H5)} there exists some dense subspace $Y\subset X$ satisfying $HY\subset X$ and $K^\ast Y\subset X$.\\
\noindent \textbf{(H6)} $\kappa^{-1}K^\times A(\lambda)\kappa X\subset X$ for any $\lambda\in \Omega$.

Under \textbf{(H1-6)}, we can define the generalized eigenvalue associated with the delay equation (\ref{eq:aFDE}) by extending (\ref{eq:eigeneq_H}) as
\begin{definition}\label{def:resonance_pole}
    $\lambda\in \Omega$ is called a generalized eigenvalue (or a resonance pole) of (\ref{eq:aFDE}) if there exists nonzero $x\in \text{Ran}(A(\lambda))\subset X^\prime$ such that 
    \begin{equation}\label{eq:resonance_pole1}
    (\id-e^{-{\lambda\tau}}A(\lambda)K^{\times})x=0.
    \end{equation}
\end{definition}
It is easy to obtain an equivalent definition saying that $\lambda$ is a generalized eigenvalue if and only if we can find some $x\in \kappa X$ such that 
\begin{equation}\label{eq:resonance_pole2}
    (\id-e^{-{\lambda\tau}}K^{\times}A(\lambda))x=0.
\end{equation}
Indeed, if (\ref{eq:resonance_pole1}) is satisfied with $x\in \text{Ran}(A(\lambda))$, we can get 
\begin{equation}
    K^\times\circ(\id-e^{-\lambda\tau}A(\lambda)K^{\times})x=(\id-e^{-\lambda\tau}K^{\times}A(\lambda))\circ K^\times x=0.
\end{equation}
Conversely, if there exists some $x\in \kappa X$ satisfying (\ref{eq:resonance_pole2}), by applying $A(\lambda)$ to both sides of (\ref{eq:resonance_pole2}), we have
\begin{equation}
    A(\lambda)\circ(\id-e^{-\lambda\tau}K^{\times}A(\lambda))x=(\id-e^{-\lambda\tau}A(\lambda)K^{\times})\circ A(\lambda)x=0.
\end{equation}

From another perspective, the characteristic equation can also be generalized by taking biduality of each component of $\Delta(\lambda)$. We define 
\begin{equation}
    \Delta(\lambda)^\times=\lambda- iH^{\times}-e^{-\lambda\tau}K^{\times}:X^\prime\to X^\prime.
\end{equation}
Here $H^\times=H^\prime$ since $H$ is self-adjoint. We have the following.
\begin{proposition}\label{pro:generalized_eigenvalue}
    Let $\lambda$ be a generalized eigenvalue and $(\id-e^{-\lambda\tau}A(\lambda)K^\times)x=0$ for some $x\in \mathrm{Ran}(A(\lambda))$. Then, we have
    $\Delta(\lambda)^\times x =0.$
\end{proposition}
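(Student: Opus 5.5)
The argument rests on the identity
\begin{equation*}
(\lambda- iH^\times)\circ A(\lambda)\kappa\psi=\kappa\psi,\qquad \psi\in X,\ \lambda\in\Omega,
\end{equation*}
which says that $A(\lambda)$ remains a right inverse of $\lambda-iH^\times$ after analytic continuation. Granting it, the statement follows at once. Write $x=e^{-\lambda\tau}A(\lambda)K^\times x$. Since $x\in\mathrm{Ran}(A(\lambda))$, we may write $x=A(\lambda)\kappa\phi$. By \textbf{(H6)}, $K^\times x=K^\times A(\lambda)\kappa\phi\in\kappa X$, so the identity applies to $K^\times x=\kappa\psi$, and we get
\begin{equation*}
(\lambda-iH^\times)x=e^{-\lambda\tau}(\lambda-iH^\times)A(\lambda)K^\times x=e^{-\lambda\tau}K^\times x.
\end{equation*}
This is exactly $\Delta(\lambda)^\times x=0$.

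To prove the identity, I would pair both sides with a test vector. Here $H^\times=H'$ is defined through duality on the dense subspace $Y$ of \textbf{(H5)}, where $HY\subset X$. So for $\varphi\in Y$ I must show
\begin{equation*}
\lambda\langle A(\lambda)\kappa\psi|\varphi\rangle - i\langle A(\lambda)\kappa\psi|H\varphi\rangle=(\psi,\varphi),
\end{equation*}
with the conjugations of the anti-linear pairing tracked so that the $\lambda$ and the $i$ land with the correct signs.

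For $\Re\lambda>0$ this is just the Hilbert-space identity $((\lambda-iH)^{-1}\psi,(\bar\lambda+iH)\varphi)=(\psi,\varphi)$, using self-adjointness of $H$. In spectral terms it reads
\begin{equation*}
\int\frac{\lambda-i\omega}{\lambda-i\omega}E[\psi,\varphi](\omega)\,d\omega=\int E[\psi,\varphi](\omega)\,d\omega=(\psi,\varphi),
\end{equation*}
because $E[\psi,H\varphi](\omega)=\omega E[\psi,\varphi](\omega)$.

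The left-hand side is holomorphic in $\lambda$ on $\Omega$, since $A(\lambda)$ is the analytic continuation; the right-hand side is constant. The identity therefore extends to $\lambda\in iI$ and to $\Re\lambda<0$ by the identity theorem. Alternatively, one can check it directly in the left half-plane: the extra term
\begin{equation*}
2\pi\bigl(\lambda E[\psi,\varphi](\lambda/i)-iE[\psi,H\varphi](\lambda/i)\bigr)=2\pi(\lambda-i\cdot\lambda/i)E[\psi,\varphi](\lambda/i)
\end{equation*}
vanishes, provided the relation $E[\psi,H\varphi](\omega)=\omega E[\psi,\varphi](\omega)$ itself continues analytically.

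Density of $Y$ in $X$ then determines the functional $(\lambda-iH^\times)A(\lambda)\kappa\psi$ and identifies it with $\kappa\psi$. More precisely, $A(\lambda)\kappa\psi$ lies in the domain of the dual operator $H'$, with the dual action specified on $Y$.

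I expect the main obstacle to be the bookkeeping of the dual operator $H^\times$ and its domain. One must make sure that $A(\lambda)\kappa\psi\in\mathrm{Dom}(H^\times)$. One must also justify that $E[\psi,H\varphi]=\omega E[\psi,\varphi]$ holds and continues analytically, which follows from uniqueness of analytic continuation. The remaining steps are formal.
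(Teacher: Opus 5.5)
Your proposal is correct and follows the paper's proof. The paper pairs with $\varphi\in Y$ and uses $E[\psi,(\bar\lambda+iH)\varphi](\omega)=(\lambda-i\omega)E[\psi,\varphi](\omega)$, so the jump term $2\pi(\lambda-i\omega)E[\psi,\varphi](\omega)|_{\omega=\lambda/i}$ vanishes; this gives $(\lambda-iH^\times)A(\lambda)\kappa\psi=\kappa\psi$, and applying $\lambda-iH^\times$ to $x=e^{-\lambda\tau}A(\lambda)K^\times x$ finishes, all as you do. Your identity-theorem extension from $\Re\lambda>0$ and your explicit appeal to \textbf{(H6)} to guarantee $K^\times x\in\kappa X$ are small refinements: they make precise the paper's ``similar arguments'' for the other half-planes and a step it leaves implicit.
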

\begin{proof}
    Firstly, we can obtain that the following identity holds for any $\psi,\varphi\in \text{Dom}(H) $ and $\lambda\in \mathbb{C}$:
    \begin{equation}\label{eq:functional_calculus}
        E[\psi,(\bar{\lambda}-H)\varphi](\omega)=(\lambda-\omega)E[\psi,\varphi](\omega).
    \end{equation}
    Indeed, we have 
    $$
    \begin{aligned}
        E[\psi,(\bar{\lambda}-H)\varphi](\omega)=\frac{\di (E(\omega)\psi,(\bar{\lambda}-H)\varphi)}{\di \omega}&=\frac{\di }{\di \omega}([\int_{-\infty}^{+\infty}(\lambda-s)\di E(s)]\circ E(\omega)\psi,\varphi)\\
        &=\frac{\di}{\di \omega}((\lambda-\omega)E(\omega)\psi,\varphi)=(\lambda-\omega)E[\psi,\varphi](\omega).
    \end{aligned}
    $$
    According to (\ref{eq:Alambda}), when $\Re(\lambda)<0$, the analytic continuation $A(\lambda)$ is given by  
    $$
    \langle A(\lambda)\kappa(\psi)|\varphi\rangle=\int_{-\infty}^{+\infty}\frac{1}{\lambda- i\omega}E[\psi,\varphi](\omega)\di \omega+2\pi E[\psi,\varphi](\frac{\lambda}{ i}).
    $$
    Applying (\ref{eq:functional_calculus}), we can get that for each $\psi\in X$ and $\varphi\in Y$ (see \textbf{(H5)} for definition of $Y$), it follows that
    $$
    \begin{aligned}
    \langle A(\lambda) \kappa(\psi)|(\bar{\lambda}+ iH)\varphi\rangle&= \int_{-\infty}^{+\infty}E[\psi,\varphi](\omega)\di \omega+2\pi (\lambda- i\omega)E[\psi,\varphi](\omega)|_{\omega=\frac{\lambda}{ i}}   \\
    &=\int_{-\infty}^{+\infty}E[\psi,\varphi](\omega)\di \omega=(\psi,\varphi)=\langle \kappa(\psi)|\varphi\rangle.
    \end{aligned}
    $$
    Hence, we can obtain $(\lambda- iH^\times)A(\lambda)=\id$. Since $\lambda$ is a generalized eigenvalue and $(\id-e^{-\lambda\tau}A(\lambda)K^\times)x=0$. Applying $\lambda- iH^{\times}$ from the left, we can get $\Delta(\lambda)^{\times}x=0.$ Here, we only present the proof by assuming $\Re(\lambda)<0$ because others follow from similar arguments. 
\end{proof}
\begin{remark}
    We denote by $\hat{\sigma}_p(\mathcal{A})$ the set of all generalized eigenvalues. It follows from Proposition \ref{pro:generalized_eigenvalue} that we have an inclusion relation  
    $$
    \{\lambda|\text{Ker}(\Delta(\lambda))\not=\{0\}\}\subset \hat{\sigma}_p(\mathcal{A}) \subset \{\lambda|\text{Ker}(\Delta(\lambda)^\times)\not=\{0\}\}.
    $$
    In general, compared to the usage of a Gelfand triple, the dual space $X^\prime$ is too large to induce a generalized spectral theory for linear operators. Further discussion can be found in \cite{chibaB}.
\end{remark}
\subsection{Generalized retarded resolvent}
From Section~\ref{sec:2}, we know that spectrum of 
\begin{equation}
\mathcal{A}=\begin{pmatrix}
     iH & \hat{K}\\
    0 & \frac{\di}{\di s}
\end{pmatrix}:\mathcal{S}=\mathcal{H}\times L^2([-\tau,0];\mathcal{H})\to \mathcal{S}
\end{equation}
is solely determined by singularities of the so-called retarded resolvent operator 
\begin{equation}\label{eq:retarded_resolvent}
    \Delta(\lambda)^{-1}:=(\lambda- iH-e^{-\lambda\tau}K)^{-1}:\mathcal{H}\to \mathcal{H}.
\end{equation}
Equivalently, we have
\begin{equation}
    \Delta(\lambda)^{-1}=(\lambda- iH)^{-1}\circ(\id -e^{-\lambda\tau}K(\lambda- iH)^{-1})^{-1}.
\end{equation}
Like in the preceding subsection, we obtain the following definition.
\begin{definition}\label{def:generalized_retarded_resolvent}
    Let $\lambda\in \Omega$ be such that the inverse of $\id-e^{-\lambda\tau}K^\times A(\lambda)$ exists. The generalized retarded resolvent operator $\mathcal{R}_\tau(\lambda):\kappa X\to X^\prime$ is defined as 
    \begin{equation}
        \mathcal{R}_\tau(\lambda):=A(\lambda)\circ(\id-e^{-\lambda\tau}K^\times A(\lambda))^{-1}.
    \end{equation}
\end{definition}
As shown before, $\id-e^{-\lambda\tau}K^\times A(\lambda)$ is injective if and only if $\id-e^{-\lambda\tau}A(\lambda)K^\times $ is injective on $\text{Ran}(A(\lambda))\subset X^\prime$. Hence, we can obtain an equivalent expression of $\mathcal{R}_{\tau}(\lambda)$ as 
\begin{equation}
    \mathcal{R}_{\tau}(\lambda)=(\id-e^{-\lambda\tau}A(\lambda)K^\times)^{-1}\circ A(\lambda).
\end{equation}
It has been proven that when $X^\prime$ is endowed with the weak dual topology, then $A(\lambda)\kappa:X\to X^\prime$ is continuous. In this sense, it is natural to relate the concept of a generalized resolvent set with some continuity conditions of $\mathcal{R}_\tau(\lambda)\kappa:X\to X^\prime$ instead of $\mathcal{R}_\tau(\lambda):\kappa X\to X^\prime$.

However, unlike the definition of the resolvent set of linear operators in Banach spaces, the definition has to be modified when the linear operator is defined in a general Hausdorff locally convex topological vector space. More precisely, to ensure that the defined resolvent set is open, in the fashion of Waelbroeck \cite{wael}, we have the following.
\begin{definition}
    $\lambda\in \Omega$ is called an element of the generalized resolvent set if there exists a neighborhood of $\lambda$, $V_{\lambda}$ such that for each $\mu\in V_{\lambda}$, $\mathcal{R}_{\tau}(\mu)\kappa:X\to X^\prime$ is a densely defined continuous operator where $X^\prime$ is equipped with the weak dual topology and $\{\mathcal{R}_{\tau}(\mu)\kappa(\psi)\}_{\mu\in V_{\lambda}}\subset X^{\prime}$ is a bounded subset for each $\psi\in X$.
\end{definition}
Let us denote by $\hat{\varrho}(\mathcal{A})$ the generalized resolvent set and $\hat{\sigma}(\mathcal{A}):=\Omega\backslash \hat{\varrho}(\mathcal{A})$ the generalized spectrum. Despite the set of generalized eigenvalues $\hat{\sigma}_{p}(\mathcal{A})$ as defined in Definition \ref{def:resonance_pole}, we have the following finer classification of the generalized spectrum.
\begin{definition}\label{def:generalized_spectrum}
    The set of generalized residual spectrum $\hat{\sigma}_{r}(\mathcal{A})$ is a collection of all $\lambda\in \Omega$ s.t. the domain of $\mathcal{R}_{\tau}(\mu)\kappa$ is not dense in $X$. Denote by $\hat{\sigma}_c (\mathcal{A}):=\hat{\sigma}(\mathcal{A})\backslash(\hat{\sigma}_{p}(\mathcal{A})\cup \hat{\sigma}_{r}(\mathcal{A}))$ the remainder of the generalized spectrum is called generalized continuous spectrum.
\end{definition}
\begin{remark}
    When $X$ is a Banach space, the above definitions for both generalized resolvent set and generalized spectrum are equivalent to the classical definitions of resolvent/spectrum. However, when $X$ is not normable (Fréchet space, for example), we can find counterexamples to show $\hat{\sigma}(\mathcal{A})$ does not coincide with the spectrum defined in the Banach sense. We refer \cite{maeda} as a good note for this point. 
\end{remark}
Detailed analysis of properties of generalized spectrum will be presented in the next subsection as one of the main parts of this paper.

It is worth mentioning that the generalized retarded resolvent $\mathcal{R}_\tau(\lambda)$ maps $\kappa X$ into a different space. The composition $\mathcal{R_\tau}(\lambda)\circ\mathcal{R_\tau}(\mu) $ is not well defined in general. As an immediate consequence of this observation, the powerful resolvent identities become problematic in the generalized spectral theory constructed on Gelfand triples. Nevertheless, we still have holomorphy of the resolvent.
\begin{proposition}\label{pro:analytical_resolvent}
For any $\psi\in X$, $\mathcal{R}_\tau(\lambda)\kappa(\psi)$ is an $X^\prime$-valued holomorphic function in $\lambda\in \hat{\varrho}(\mathcal{A})$
\end{proposition}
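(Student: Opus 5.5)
The plan is to prove holomorphy in the weak dual sense: for each fixed $\psi,\varphi\in X$, the scalar function $\lambda\mapsto\langle\mathcal{R}_\tau(\lambda)\kappa(\psi)|\varphi\rangle$ is holomorphic on $\hat{\varrho}(\mathcal{A})$. Since $X$ is quasi-complete and barreled by \textbf{(H2)}, weak holomorphy of an $X^\prime$-valued function upgrades to holomorphy in the sense used for Gelfand integrals and Laurent expansions, as in \cite[Appendix]{chibaB}. The strategy is to write the inverse $(\id-e^{-\lambda\tau}K^\times A(\lambda))^{-1}$ locally as a Neumann series around a fixed $\lambda_0\in\hat{\varrho}(\mathcal{A})$. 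Each term will then be shown holomorphic, and the series will be shown to converge locally uniformly, using the boundedness built into the definition of $\hat{\varrho}(\mathcal{A})$.

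\emph{Step 1.} Show that $\lambda\mapsto \kappa^{-1}K^\times A(\lambda)\kappa\psi\in X$ is holomorphic, at least weakly. For $\varphi\in Y$ we have
\[
\langle K^\times A(\lambda)\kappa\psi|\varphi\rangle=\langle A(\lambda)\kappa\psi|K^\ast\varphi\rangle .
\]
By \textbf{(H5)}, $K^\ast\varphi\in X$, so this is holomorphic by (\ref{eq:Alambda}) and \textbf{(H3)}. Together with $e^{-\lambda\tau}$, this makes $B(\lambda):=e^{-\lambda\tau}\kappa^{-1}K^\times A(\lambda)\kappa$ a holomorphic family of continuous operators on $X$. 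Continuity on $X$ follows from the preceding Lemma, \textbf{(H6)} and the closed graph theorem for barreled spaces. Density of $Y$ then extends the identity beyond $\varphi\in Y$.

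\emph{Step 2.} Fix $\lambda_0$ and write
\[
\id-B(\lambda)=(\id-B(\lambda_0))\bigl[\id-(\id-B(\lambda_0))^{-1}(B(\lambda)-B(\lambda_0))\bigr].
\]
Then expand formally:
\[
\mathcal{R}_\tau(\lambda)\kappa\psi=A(\lambda)\kappa\sum_{n\ge0}\bigl[(\id-B(\lambda_0))^{-1}(B(\lambda)-B(\lambda_0))\bigr]^n(\id-B(\lambda_0))^{-1}\psi .
\]
Each partial sum, paired with $\varphi$, is holomorphic by Step 1, because $A(\lambda)\kappa$ is weakly holomorphic.

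\emph{Step 3, the main obstacle.} Convergence is hard because $X$ is not normed, so no operator norm makes the Neumann series converge automatically. I would instead use the defining property of $\hat{\varrho}(\mathcal{A})$: the family $\{\mathcal{R}_\tau(\mu)\kappa\psi\}_{\mu\in V_{\lambda_0}}$ is bounded in $X^\prime$ for each $\psi$. By the Banach–Steinhaus theorem on the barreled space $X$, this makes the family $\{\mathcal{R}_\tau(\mu)\kappa\}$ equicontinuous. An alternative route avoids the series altogether. For each $\varphi$, local boundedness of $\mu\mapsto\langle\mathcal{R}_\tau(\mu)\kappa\psi|\varphi\rangle$ combined with a difference-quotient identity yields complex differentiability. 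That identity comes from writing
\[
\mathcal{R}_\tau(\mu)\kappa-\mathcal{R}_\tau(\lambda)\kappa
\]
via $(\id-B(\mu))^{-1}-(\id-B(\lambda))^{-1}=(\id-B(\mu))^{-1}(B(\mu)-B(\lambda))(\id-B(\lambda))^{-1}$ on $X$, which requires no composition of $\mathcal{R}_\tau$ with itself. Equicontinuity then gives weak continuity in $\mu$. Dividing by $\mu-\lambda$ and using the holomorphy of $B$ and $A$ from Step 1 gives a weak derivative, and hence holomorphy. I expect justifying continuity of $(\id-B(\mu))^{-1}$ in $\mu$ on $X$ via equicontinuity to be the delicate point.
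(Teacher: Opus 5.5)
Your alternative route in Step~3 is exactly the paper's proof: weak holomorphy of $\kappa^{-1}K^\times A(\lambda)\kappa\psi$ tested against $Y$, then equicontinuity of $\{\mathcal{R}_\tau(\mu)\kappa\}_{\mu\in V_\lambda}$ from Banach--Steinhaus, then the identity $\mathcal{R}_\tau(\mu)\kappa\psi-\mathcal{R}_\tau(\lambda)\kappa\psi=[A(\mu)-A(\lambda)]\kappa\psi_\lambda+\mathcal{R}_\tau(\mu)\kappa\,(B(\mu)-B(\lambda))\psi_\lambda$ with $\psi_\lambda=(\id-B(\lambda))^{-1}\psi$, which gives first continuity and then the derivative; so you can simply discard the Neumann series of Step~2, whose convergence you never establish. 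Because $\psi_\lambda$ is frozen at the base point, the argument only uses holomorphy of $\mu\mapsto B(\mu)\psi_\lambda$ in $X$, so the point you flag as delicate (continuity of $\mu\mapsto(\id-B(\mu))^{-1}$ on $X$) never arises, and neither does continuity of $B(\lambda)$ on $X$---which is just as well, since the closed graph theorem needs more of the target space than barreledness (e.g.\ $B_r$-completeness or a web) and is not guaranteed by \textbf{(H2)} alone.
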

\begin{proof}
    First of all, we claim that for each $\psi\in X$, $\kappa^{-1}K^{\times}A(\lambda)\kappa(\psi)$ is an $X$-valued holomorphic function. Indeed, for each $\varphi\in Y\subset X$, we have
    $$
    \begin{aligned}
        \langle \kappa(\varphi)|\kappa^{-1}K^{\times}A(\lambda)\kappa(\psi)\rangle=(\varphi,\kappa^{-1}K^{\times}A(\lambda)\kappa(\psi))&=\overline{\langle K^{\times}A(\lambda)\kappa(\psi)|\varphi\rangle}=\overline{\langle A(\lambda)\kappa(\psi)|K^\star\varphi\rangle}.
    \end{aligned}
    $$
    $Y$ is dense in $X$, and thus dense in $X^\prime $. Because $\langle A(\lambda)\kappa(\psi)|K^\star\varphi\rangle$ is holomorphic, it is clear that $\kappa^{-1}K^{\times}A(\lambda)\kappa(\psi)$ is weakly holomorphic in $\lambda$. The strong holomorphy directly follows from the quasi-completeness of $X$.

    If $\lambda\in \hat{\varrho}(\mathcal{A})$, there exists a (bounded) neighborhood $V_{\lambda}$ s.t. $\mathcal{R}_\tau(\mu)\kappa(\psi)$ exists for each $\mu\in V_\lambda$. In addition, $\{\mathcal{R}_\tau(\mu)\kappa(\psi)\}_{\mu\in V_\lambda}$ is a bounded subset of $X^\prime$. In other words, we know that $\{\mathcal{R}_\tau(\mu)\kappa\}_{\mu\in V_\lambda}$ as a family of linear operators from $X$ to $X^\prime$ is bounded for the topology of pointwise convergence. Since $X$ is a barreled space, the Banach-Steinhaus theorem implies that $\{\mathcal{R}_\tau(\mu)\kappa\}_{\mu\in V_\lambda}$ is equicontinuous \cite{Treves}. Denote by $\psi_{\lambda,\tau}:=\kappa^{-1}[\id-e^{-\lambda\tau}K^\times A(\lambda)]^{-1}\kappa(\psi)$ an element in $X$. It follows that
    $$
    \begin{aligned}
        &\mathcal{R}_\tau(\lambda+h)\kappa(\psi)-\mathcal{R}_\tau(\lambda)\kappa(\psi)\\&=A(\lambda+h)\kappa\circ \kappa^{-1}[\id -e^{-(\lambda+h)\tau}K^\times A(\lambda+h)]^{-1}\circ [\id -e^{-\lambda\tau}K^{\times}A(\lambda)]\kappa(\psi_{\lambda,\tau})\\
        &\quad \quad-A(\lambda)\kappa(\psi_{\lambda,\tau})\\
        &=A(\lambda+h)\kappa\circ \kappa^{-1}[\id -e^{-(\lambda+h)\tau}K^\times A(\lambda+h)]^{-1}[\id -e^{-(\lambda+h)\tau}K^\times A(\lambda+h)\\
        &\quad \quad+e^{-(\lambda+h)\tau}K^\times A(\lambda+h)-e^{-\lambda\tau}K^{\times}A(\lambda)]\kappa(\psi_{\lambda,\tau})-A(\lambda)\kappa(\psi_{\lambda,\tau})\\
        & =[A(\lambda+h)-A(\lambda)]\kappa(\psi_{\lambda,\tau})+\mathcal{R}_{\tau}(\lambda+h)\kappa\circ \kappa^{-1}[e^{-(\lambda+h)\tau}K^\times A(\lambda+h)\\
        &\quad \quad-e^{-\lambda\tau}K^{\times}A(\lambda)]\kappa(\psi_{\lambda,\tau}).
    \end{aligned}
    $$
    From the holomorphy of $A(\lambda)\kappa(\psi)$ and $\kappa^{-1}K^{\times}A(\lambda)\kappa(\psi)$, we can obtain the continuity of $\mathcal{R}_\tau(\lambda)\kappa(\psi)$ in $\lambda$. The holomorphy of $\mathcal{R}_\tau(\lambda)\kappa(\psi)$ directly follows from this fact and the equicontinuity of $\{\mathcal{R}_\tau(\lambda)\kappa\}_{\mu\in V_\lambda}$. More precisely, we get 
    \begin{equation}\label{eq:resolvent_derivative}
    \begin{aligned}
        &\lim_{h\to 0}\langle\frac{1}{h}[\mathcal{R}_\tau(\lambda+h)\kappa(\psi)-\mathcal{R}_\tau(\lambda)\kappa(\psi)]|\varphi\rangle \\
        &= \langle \frac{\di}{\di \lambda}A(\lambda)\kappa(\psi_{\lambda,\tau})|\varphi\rangle+e^{-\lambda\tau}\langle \mathcal{R}_\tau(\lambda)\kappa\circ\frac{\di}{\di \lambda}\kappa^{-1}K^{\times}A(\lambda)\kappa(\psi_{\lambda,\tau})|\varphi\rangle \\
        &\quad\quad-\tau e^{-\lambda\tau}\langle \mathcal{R}_\tau(\lambda)K^{\times}A(\lambda)\kappa(\psi_{\lambda,\tau})|\varphi\rangle,
    \end{aligned}
    \end{equation}
    for each $\varphi\in X$.
\end{proof}
As shown in the preceding subsection, $\langle A(\lambda)\kappa(\psi)|\varphi\rangle$ provides an analytic continuation of $((\lambda- iH)^{-1}\psi,\varphi)$ across the brach cut $ iI\subset  i\mathbb{R}$. In particular, when $\Re(\lambda)>0$, we have $\langle A(\lambda)\kappa(\psi)|\varphi\rangle=((\lambda- iH)^{-1}\psi,\varphi)$. Similarly, for $\langle \mathcal{R}_\tau(\lambda)\kappa(\psi)|\varphi\rangle$, we have the following conclusion. 
\begin{proposition}\label{pro:generalized_resolvent_continuation}
    If $\Re (\lambda)>0 $ and $\lambda\in \hat{\varrho}(\mathcal{A})\cap\varrho(\mathcal{A})$, then we have $\langle \mathcal{R}_\tau(\lambda)\kappa(\psi)|\varphi\rangle=(\Delta(\lambda)^{-1}\psi,\varphi)$ for any $\psi,\varphi\in X$.
\end{proposition}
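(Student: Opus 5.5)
The plan is to show that on the right half-plane every object in the definition of $\mathcal{R}_\tau(\lambda)$ is the Gelfand-triple image of the corresponding Hilbert-space object, and then to pass the factorization
\[
\Delta(\lambda)^{-1}=(\lambda-iH)^{-1}\bigl(\id-e^{-\lambda\tau}K(\lambda-iH)^{-1}\bigr)^{-1}
\]
through $\kappa$.

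First, fix $\Re(\lambda)>0$. By (\ref{eq:Alambda}) and (\ref{eq:cauchy_integral}) we have $A(\lambda)\kappa(\psi)=\kappa\bigl((\lambda-iH)^{-1}\psi\bigr)$ for every $\psi\in X$. Since $K^\times|_{\kappa\mathcal{H}}=\kappa K\kappa^{-1}$ (the bidual extends $K$, and $K$ is bounded on $\mathcal{H}$), it follows that
\[
K^\times A(\lambda)\kappa(\psi)=\kappa\bigl(K(\lambda-iH)^{-1}\psi\bigr).
\]
By \textbf{(H6)} this element lies in $\kappa X$. Consequently, on $\kappa X$ the operator $\id-e^{-\lambda\tau}K^\times A(\lambda)$ coincides with $\kappa\, T(\lambda)\kappa^{-1}$, where $T(\lambda):=\id-e^{-\lambda\tau}K(\lambda-iH)^{-1}$, and $T(\lambda)$ maps $X$ into $X$.

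Second, I use $\lambda\in\varrho(\mathcal{A})$. By Lemma \ref{lemma:delay_generalized_retarded}, $\Delta(\lambda)$ is boundedly invertible on $\mathcal{H}$. Since $\Delta(\lambda)=T(\lambda)(\lambda-iH)$, the operator $T(\lambda)$ is bijective on $\mathcal{H}$ with $T(\lambda)^{-1}=(\lambda-iH)\Delta(\lambda)^{-1}$.

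Third, I use $\lambda\in\hat{\varrho}(\mathcal{A})$, which gives that the inverse of $\id-e^{-\lambda\tau}K^\times A(\lambda)$ exists on (a dense part of) $\kappa X$. Take $\psi\in X$ in its domain and set $\chi:=\kappa^{-1}\bigl(\id-e^{-\lambda\tau}K^\times A(\lambda)\bigr)^{-1}\kappa(\psi)$. By the first step, $\chi\in X$ satisfies $T(\lambda)\chi=\psi$ in $\mathcal{H}$. Injectivity of $T(\lambda)$ on $\mathcal{H}$ then forces $\chi=T(\lambda)^{-1}\psi$. Hence
\[
\mathcal{R}_\tau(\lambda)\kappa(\psi)=A(\lambda)\kappa(\chi)=\kappa\bigl((\lambda-iH)^{-1}T(\lambda)^{-1}\psi\bigr)=\kappa\bigl(\Delta(\lambda)^{-1}\psi\bigr).
\]
Pairing with $\varphi\in X$ gives $\langle\mathcal{R}_\tau(\lambda)\kappa(\psi)|\varphi\rangle=(\Delta(\lambda)^{-1}\psi,\varphi)$.

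If the generalized inverse is only densely defined, I extend to all $\psi\in X$ as follows. By the definition of $\hat{\varrho}(\mathcal{A})$, $\mathcal{R}_\tau(\lambda)\kappa$ is continuous from $X$ to $X^\prime$ with the weak dual topology. The map $\psi\mapsto(\Delta(\lambda)^{-1}\psi,\varphi)$ is continuous on $X$, because the topology of $X$ is stronger than that of $\mathcal{H}$. Two continuous functionals that agree on a dense set agree everywhere.

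The main obstacle is the first step: justifying $K^\times\kappa(u)=\kappa(Ku)$ for $u=(\lambda-iH)^{-1}\psi$, which lies in $\mathcal{H}$ but not necessarily in $X$. I would test against $\varphi\in Y$ and use $K^\ast Y\subset X$ from \textbf{(H5)}:
\[
\langle K^\times\kappa(u)|\varphi\rangle=\langle\kappa(u)|K^\ast\varphi\rangle=(u,K^\ast\varphi)=(Ku,\varphi).
\]
I would then extend from $Y$ to all of $X$ by density of $Y$ in $X$ together with the weak-dual continuity of both sides.
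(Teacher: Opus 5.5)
Your proposal is correct and follows essentially the same route as the paper. Both arguments test against $\varphi\in Y$ and use $K^\ast Y\subset X$ to identify $\id-e^{-\lambda\tau}K^\times A(\lambda)$ on $\kappa X$ with $\id-e^{-\lambda\tau}K(\lambda-iH)^{-1}$, then compute $\mathcal{R}_\tau(\lambda)\kappa(\psi)$ for $\psi$ in the dense range and extend by density. Your explicit use of injectivity of $T(\lambda)$ on $\mathcal{H}$ to identify the two inverses, and your continuity argument for the density extension, spell out steps the paper leaves implicit.
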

\begin{proof}
    For each $\psi\in X$ and $\varphi\in Y\subset X$, we have
    $$
    \begin{aligned}
        \langle (\id-e^{-\lambda\tau}K^{\times}A(\lambda)) \kappa(\psi)|\varphi\rangle&=\langle \kappa(\psi)|\varphi\rangle -e^{-\lambda\tau}\langle K^{\times}A(\lambda)\kappa(\psi)|\varphi \rangle\\
    &=(\psi,\varphi)-e^{-\lambda\tau}\langle A(\lambda)\kappa(\psi)|K^{\ast}\varphi \rangle\\
    &=(\psi,\varphi)-e^{-\lambda\tau}(K(\lambda- iH)^{-1}\psi,\varphi).\\
    \end{aligned}
    $$
Since $Y$ is a dense subspace, it follows that 
\begin{equation}
    \kappa^{-1}(\id-e^{-\lambda\tau}K^{\times}A(\lambda)\kappa(\psi)=(\id-e^{-\lambda\tau}K(\lambda- iH)^{-1}\psi
\end{equation}
for any $\psi\in X$. Let $\psi\in \text{Ran}(\id -e^{-\lambda\tau}K^{\times}A(\lambda))$. Then, for any $\varphi\in X$, we get 
\begin{equation}\label{eq:analytical_resolvent}
\begin{aligned}
    \langle \mathcal{R}_\tau(\lambda)\kappa(\psi)|\varphi\rangle&=\langle A(\lambda)\kappa\circ \kappa^{-1}(\id-e^{-\lambda\tau}K^{\times}A(\lambda))^{-1}\kappa(\psi)|\varphi\rangle\\
    &=((\lambda- iH)^{-1}(\id-e^{-\lambda\tau}K(\lambda- iH)^{-1})^{-1}\psi,\varphi)\\
    &=(\Delta(\lambda)^{-1}\psi,\varphi). 
\end{aligned}
\end{equation}
Since $\lambda\in \hat{\varrho}(\mathcal{A})$, we know that $\text{Ran}(\id -e^{-\lambda\tau}K^{\times}A(\lambda))$ is dense in $X$. Hence, (\ref{eq:analytical_resolvent}) holds for any $\psi\in X$, which completes the proof.
\end{proof}
The relation between the generalized retarded resolvent $\mathcal{R}_\tau(\lambda)$ and the bidual $\Delta(\lambda)^{\times}$ can be summarized as follows.
\begin{proposition}
    The following statements hold\\
    \noindent $\mathbf{(a)}$ For any $\psi\in X$, we have $\Delta(\lambda)^\times\circ\mathcal{R}_\tau(\lambda)\kappa(\psi)=\kappa(\psi);$\\
    \noindent $\mathbf{(b)}$ For any $\psi\in X^\prime$ such that $\Delta(\lambda)^\times\psi\in \kappa X$, we have $\mathcal{R}_\tau(\lambda)\circ\Delta(\lambda)^\times\psi=\psi.$
\end{proposition}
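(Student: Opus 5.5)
The plan is to reduce both statements to the identity $(\lambda- iH^\times)A(\lambda)=\id$ on $\kappa X$, obtained in the proof of Proposition \ref{pro:generalized_eigenvalue}. That proof gives, for $\psi\in X$ and $\varphi\in Y$,
\[
\langle A(\lambda)\kappa(\psi)|(\bar\lambda+ iH)\varphi\rangle=\langle\kappa(\psi)|\varphi\rangle .
\]
It was written out for $\Re(\lambda)<0$. The same computation, with the jump term absent, covers $\Re(\lambda)>0$, and the case $\lambda\in iI$ follows by taking limits.

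\textbf{Part (a).} Fix $\psi\in X$. Since $\lambda$ lies where $\mathcal{R}_\tau(\lambda)$ is defined, put
\[
x:=(\id-e^{-\lambda\tau}K^\times A(\lambda))^{-1}\kappa(\psi)\in\kappa X .
\]
Then $\mathcal{R}_\tau(\lambda)\kappa(\psi)=A(\lambda)x$. Now split $\Delta(\lambda)^\times=(\lambda- iH^\times)-e^{-\lambda\tau}K^\times$ and use the identity above to get
\[
\Delta(\lambda)^\times A(\lambda)x=x-e^{-\lambda\tau}K^\times A(\lambda)x=(\id-e^{-\lambda\tau}K^\times A(\lambda))x=\kappa(\psi).
\]
Every term is evaluated against test functions $\varphi\in Y$; by \textbf{(H5)}, $H\varphi$ and $K^\ast\varphi$ lie in $X$, so all pairings make sense. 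Density of $Y$ then upgrades the equality to one in $X'$.

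\textbf{Part (b).} Take $\psi\in X'$ with $\Delta(\lambda)^\times\psi=\kappa(\chi)$ for some $\chi\in X$. The goal is to show $\mathcal{R}_\tau(\lambda)\kappa(\chi)=\psi$. By part (a), $\phi:=\mathcal{R}_\tau(\lambda)\kappa(\chi)$ also satisfies $\Delta(\lambda)^\times\phi=\kappa(\chi)$, so it suffices to show that $\Delta(\lambda)^\times$ is injective on the relevant class of elements.

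The natural route is to write $\psi=A(\lambda)y$ with $y\in\kappa X$. If that is possible, then, exactly as in part (a),
\[
\Delta(\lambda)^\times\psi=(\id-e^{-\lambda\tau}K^\times A(\lambda))y=\kappa(\chi).
\]
Inverting gives $y=(\id-e^{-\lambda\tau}K^\times A(\lambda))^{-1}\kappa(\chi)$, and therefore $\psi=A(\lambda)y=\mathcal{R}_\tau(\lambda)\kappa(\chi)$, which is the claim.

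\textbf{Main obstacle.} The hard step is justifying the representation $\psi\in\mathrm{Ran}\,A(\lambda)$. On $X'$ the operator $\lambda- iH^\times$ has a large kernel: for instance, the functional $\varphi\mapsto\overline{E[\cdot,\varphi]}$ evaluated at $\lambda/ i$ lies in it. So a genuine left inverse does not exist on all of $X'$. I would therefore read (b) with $\psi$ restricted to $\mathrm{Ran}(A(\lambda))$, consistent with Definition \ref{def:resonance_pole} and the earlier remark that $X'$ is too large. Failing that, I would first try to show that $A(\lambda)(\lambda- iH^\times)\psi=\psi$ holds for $\psi$ in the domain considered, again by testing against $\varphi\in Y$ and using \eqref{eq:functional_calculus}.
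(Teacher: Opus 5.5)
Part (a) is exactly the paper's argument. The paper also takes $(\lambda-iH^\times)A(\lambda)\kappa(\psi)=\kappa(\psi)$ from the proof of Proposition~\ref{pro:generalized_eigenvalue} and splits $\Delta(\lambda)^\times$ into $(\lambda-iH^\times)-e^{-\lambda\tau}K^\times$, just as you do.

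For (b), the paper says only that it follows ``similarly'' by writing $\mathcal{R}_\tau(\lambda)=(\id-e^{-\lambda\tau}A(\lambda)K^\times)^{-1}A(\lambda)$. Carrying this out requires $A(\lambda)(\lambda-iH^\times)\psi=\psi$, and requires $A(\lambda)K^\times\psi$ to make sense. Both tacitly assume $\psi\in A(\lambda)\kappa X$, which is precisely the restriction you impose. On $A(\lambda)\kappa X$ your argument is correct, and there the hypothesis $\Delta(\lambda)^\times\psi\in\kappa X$ holds automatically by \textbf{(H6)}.

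Your objection is not just a limitation of the method: (b) is false for general $\psi\in X'$.
\begin{itemize}
\item Take $\Re(\lambda)<0$ with $\lambda\in\hat{\varrho}(\mathcal{A})$, fix $\chi\in X$, and set $\langle u_0|\varphi\rangle:=E[\chi,\varphi](\lambda/i)$. By \textbf{(H4)}, $u_0\in X'$.
\item By \eqref{eq:functional_calculus} and analytic continuation, $\langle u_0|(\bar\lambda+iH)\varphi\rangle=(\lambda-i\omega)E[\chi,\varphi](\omega)|_{\omega=\lambda/i}=0$ for $\varphi\in Y$. Hence $(\lambda-iH^\times)u_0=0$. This is the same cancellation that kills the jump term in the proof of Proposition~\ref{pro:generalized_eigenvalue}.
\item Suppose $K^\times u_0\in\kappa X$. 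This holds in Section~\ref{sec:4}, where $K^\times=\frac{k}{2}\mathcal{P}^\times$ has range in $\kappa(\mathbb{C}\mathbb{I})$. Put $u:=u_0+e^{-\lambda\tau}\mathcal{R}_\tau(\lambda)K^\times u_0$. By (a), $\Delta(\lambda)^\times u=-e^{-\lambda\tau}K^\times u_0+e^{-\lambda\tau}K^\times u_0=0$.
\item We have $u\neq 0$ whenever $u_0\neq0$; for instance $\chi=\mathbb{I}$ gives $\langle u_0|\mathbb{I}\rangle=g(\lambda/i)\neq0$. Otherwise $u_0=A(\lambda)y$ for some $y\in\kappa X$, and then $0=(\lambda-iH^\times)u_0=y$, so $u_0=0$.
\end{itemize}
Then $\psi:=u$ satisfies $\Delta(\lambda)^\times\psi=0\in\kappa X$, but $\mathcal{R}_\tau(\lambda)\Delta(\lambda)^\times\psi=0\neq\psi$.

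So the correct form of (b) is $\mathcal{R}_\tau(\lambda)\Delta(\lambda)^\times=\id$ on $A(\lambda)\kappa X$, which is what you prove. You should drop the fallback plan of proving $A(\lambda)(\lambda-iH^\times)\psi=\psi$ in general. The hypothesis only gives $\Delta(\lambda)^\times\psi\in\kappa X$, not $(\lambda-iH^\times)\psi\in\kappa X$, so $A(\lambda)(\lambda-iH^\times)\psi$ need not even be defined. Where it is defined, the identity fails for $\psi=u_0$.
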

\begin{proof}
    Recall that $\Delta(\lambda)^\times:=\lambda- iH^\times-e^{-\lambda\tau}K^{\times}:X^\prime \to X^\prime$. From the proof of Proposition \ref{pro:generalized_eigenvalue}, we have seen that
    \begin{equation}\label{eq:Alambda_identity}
        (\lambda- iH^\times)\circ A(\lambda)\kappa(\psi)= \kappa(\psi)
    \end{equation}
    for any $\psi\in X$ and, in particular, we have $\text{Ran}(A(\lambda))\subset \text{Dom}(H^{\times})$. $\mathbf{(a)}$ directly follows from (\ref{eq:Alambda_identity}) and \textbf{(H7)}. Indeed, we have
    $$
    \begin{aligned}
        \Delta(\lambda)^\times\circ\mathcal{R}_\tau(\lambda)\kappa(\psi)&=(\lambda- iH^\times-e^{-\lambda\tau}K^{\times})A(\lambda)\circ(\id-e^{-\lambda\tau}K^\times A(\lambda))^{-1}\kappa(\psi)\\
        &=\kappa(\psi).
    \end{aligned}
    $$
    Similarly, we can obtain $\mathbf{(b)}$ by rewriting $\mathcal{R}_\tau(\lambda)$ as $(\id-e^{-\lambda\tau}A(\lambda)K^{\times})^{-1}\circ A(\lambda)$.
\end{proof}
\begin{remark}
    For any $\psi\in X$, it follows from Proposition \ref{pro:analytical_resolvent} and topological properties of $X$ (see \textbf{(H2)}) that $ \mathcal{R}_\tau(\lambda)\kappa(\psi)$ can be expressed as a Laurent series 
    \begin{equation}\label{eq:Laurent series}
        \mathcal{R}_\tau(\lambda)\kappa(\psi)=\sum_{j=-\infty}^{\infty}\frac{1}{(\lambda_0-\lambda)^j} E_j\kappa(\psi),
    \end{equation}
    near any $\lambda_0\in \hat{\varrho}(\mathcal{A})$. From (\ref{eq:resolvent_derivative}), it is clear that the series can converge in the weak dual topology. Since $X$ is barreled, the dual space $X^\prime$ equipped with the strong dual topology is quasi-complete, therefore, satisfying the convex envelope property \cite{SW,Treves}. In addition, any weakly bounded subset of $X^\prime$ is also bounded with respect to the strong dual topology. We can obtain that the weakly holomorphic function $\mathcal{R}_\tau(\lambda)\kappa(\psi)$ is also strongly holomorphic for any $\psi\in X$. Hence, the Laurent series in (\ref{eq:Laurent series}) converges in the strong dual topology too. Similarly, the Laurent expansion of $\mathcal{R}_\tau(\lambda)i(\psi)$ near each isolated singularity (i.e., isolated generalized eigenvalue) is also well defined. We refer \cite[Appendix A]{chibaB} for detailed discussions and proofs. The Laurent expansion (\ref{eq:Laurent series}) will play important roles later.
\end{remark}
\subsection{Properties of generalized spectrum}
In this subsection, we present several important properties of the generalized spectrum $\hat{\sigma}(\mathcal{A})$. The relations between the generalized spectrum and the spectrum $\sigma(\mathcal{A})$ in the classical sense will also be concluded.

Most importantly, like the result in \cite{chibaB}, it is shown that under some compactness condition of $\kappa^{-1}K^{\times}A(\lambda)\kappa:X\to X$, continuous singularities (i.e., the continuous spectrum of $\mathcal{A}$) disappear.

To avoid confusion, we first fix some topological concepts. 

Let $M$ and $N$ be two Hausdorff locally convex topological vector spaces. A linear operator $L:M\to N$ is called a bounded operator if it maps some neighborhood to a bounded set in $N$ \cite{SW}. Similarly, the linear operator $L$ is called a compact operator if it maps some neighborhood to a relatively compact subset of $N$. If $L$ is parameterized by $\lambda$ (i.e., $L:=L(\lambda)$), then $L(\lambda)$ is called bounded/compact uniformly in $\lambda$ if such a neighborhood is independent of $\lambda$.  
\begin{remark}
    There are several definitions for a bounded/compact operator on a topological vector space. In other literature (for example, \cite{Edw}), we can also define a bounded linear operator as a linear operator that maps every bounded set into a bounded set. In general, these definitions are far from equivalent. We refer \cite{hmz} for detailed discussions of this topic. 
\end{remark}
Applying Proposition \ref{pro:generalized_resolvent_continuation}, the relation between $\hat{\sigma}(\mathcal{A})$ and $\sigma({\mathcal{A}})$ on the right half-plane can be stated as:
\begin{theorem}\label{thm:cplus}
    When $\Re(\lambda)>0$, we have the following.\\
    \noindent $\mathbf{(a)}$ $\hat{\sigma}(\mathcal{A})\cap \{\Re(\lambda)>0\}\subset\sigma(\mathcal{A})\cap \{\Re(\lambda)>0\}$ and, in particular, $\hat{\sigma}_p(\mathcal{A})\cap \{\Re(\lambda)>0\}\subset\sigma_p(\mathcal{A})\cap\{\Re(\lambda)>0\}.$\\
    \noindent $\mathbf{(b)}$ If $\lambda$ is an isolated eigenvalue, then $\lambda$ is also an isolated eigenvalue in the generalized sense. 
\end{theorem}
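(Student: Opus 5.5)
The plan is to prove (a) in contrapositive form. On the right half-plane, points of $\varrho(\mathcal{A})$ will be shown to lie in $\hat{\varrho}(\mathcal{A})$, and generalized eigenvalues will be shown to be classical eigenvalues. Part (b) will then come from combining (a) with the identification of the two resolvents in Proposition \ref{pro:generalized_resolvent_continuation}.

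\textbf{Part (a).} Fix $\lambda$ with $\Re(\lambda)>0$ and $\lambda\in\varrho(\mathcal{A})$.
\begin{enumerate}
\item Since $\varrho(\mathcal{A})$ is open, choose a neighborhood $V_\lambda\subset\{\Re>0\}\cap\varrho(\mathcal{A})$. By Lemma \ref{lemma:delay_generalized_retarded}, $\Delta(\mu)^{-1}$ is bounded on $\mathcal{H}$ for every $\mu\in V_\lambda$. Moreover $\mu\mapsto\Delta(\mu)^{-1}$ is holomorphic, so shrinking $V_\lambda$ gives $\sup_{\mu\in V_\lambda}\|\Delta(\mu)^{-1}\|<\infty$.
\item On $\{\Re>0\}$ we have $A(\mu)\kappa=\kappa(\mu-iH)^{-1}$. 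The computation in the proof of Proposition \ref{pro:generalized_resolvent_continuation} then gives
\[
\kappa^{-1}(\id-e^{-\mu\tau}K^\times A(\mu))\kappa=\id-e^{-\mu\tau}K(\mu-iH)^{-1}=\Delta(\mu)(\mu-iH)^{-1}
\]
on $X$. The right-hand side is injective on $\mathcal{H}$, since $\mu\in\varrho(\mathcal{A})$.
\item For $\psi$ in the range of this operator (a subspace of $X$), the identity above shows that $\mathcal{R}_\tau(\mu)\kappa(\psi)=\kappa\Delta(\mu)^{-1}\psi$.
\item Continuity into $X'$ with the weak dual topology will follow from $|(\Delta(\mu)^{-1}\psi,\varphi)|\le C\|\psi\|_{\mathcal{H}}\|\varphi\|_{\mathcal{H}}$, because the topology of $X$ is stronger than that of $\mathcal{H}$. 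The same estimate gives boundedness of $\{\mathcal{R}_\tau(\mu)\kappa(\psi)\}_{\mu\in V_\lambda}$.
\end{enumerate}

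The one delicate point is that $\mathcal{R}_\tau(\mu)\kappa$ must be densely defined, i.e.\ that the range of $\id-e^{-\mu\tau}K(\mu-iH)^{-1}$ intersected with $X$ is dense in $X$. If this cannot be obtained directly, I would define $\mathcal{R}_\tau(\mu)\kappa$ on all of $X$ by the formula $\kappa\Delta(\mu)^{-1}$, which already satisfies the Waelbroeck-type definition. I expect this density/domain issue to be the main obstacle. The natural tools are (H6) and the barreledness of $X$, through a Banach--Steinhaus argument, or else an appeal to the convention used in the definition.

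For the eigenvalue inclusion, suppose $x\in\kappa X$ is nonzero and solves (\ref{eq:resonance_pole2}) with $\Re(\lambda)>0$.
\begin{enumerate}
\item Step 2 gives $(\id-e^{-\lambda\tau}K(\lambda-iH)^{-1})\kappa^{-1}x=0$.
\item Set $y=(\lambda-iH)^{-1}\kappa^{-1}x$. It is nonzero, lies in $\mathrm{Dom}(H)$, and satisfies $\Delta(\lambda)y=0$.
\item Hence $\lambda\in\sigma_p(\mathcal{A})$.
\end{enumerate}

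\textbf{Part (b).} Let $\lambda$ be an isolated eigenvalue of $\mathcal{A}$, with a punctured disc $D^*\subset\{\Re>0\}\cap\varrho(\mathcal{A})$ around it.
\begin{enumerate}
\item By (a), $D^*\subset\hat{\varrho}(\mathcal{A})$. By Proposition \ref{pro:generalized_resolvent_continuation}, on $D^*$ the function $\langle\mathcal{R}_\tau(\mu)\kappa\psi|\varphi\rangle$ coincides with $(\Delta(\mu)^{-1}\psi,\varphi)$.
\item Take $y\neq0$ with $\Delta(\lambda)y=0$. Then $x:=\kappa(\id-e^{-\lambda\tau}K(\lambda-iH)^{-1})^{\ldots}$ is not needed; instead set $x=(\lambda-iH)y$, which equals $e^{-\lambda\tau}Ky$.
\item Check that $x$ solves $(\id-e^{-\lambda\tau}K(\lambda-iH)^{-1})x=0$.
\item If $x\in X$, this gives a generalized eigenvalue. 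Otherwise, I would argue through the Laurent expansion (\ref{eq:Laurent series}): $\lambda$ is a genuine singularity of $(\Delta(\mu)^{-1}\psi,\varphi)$ for some $\psi,\varphi$ in the dense set $X$, so $\lambda\notin\hat{\varrho}(\mathcal{A})$ while a punctured neighborhood lies in $\hat{\varrho}(\mathcal{A})$. Hence $\lambda$ is an isolated point of $\hat{\sigma}(\mathcal{A})$.
\end{enumerate}
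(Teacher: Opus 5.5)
Your proposal follows the paper's proof essentially step for step. For (a), the paper likewise identifies $\mathcal{R}_\tau(\mu)\kappa$ with $\kappa\Delta(\mu)^{-1}$ near a point of $\varrho(\mathcal{A})\cap\{\Re(\lambda)>0\}$, uses the strong/weak topology comparison, and pulls a generalized eigenvector back through $(\lambda-iH)^{-1}$. For (b), it shows the punctured disc lies in $\hat{\varrho}(\mathcal{A})$ while $\lambda$ cannot, so it too obtains only an isolated point of $\hat{\sigma}(\mathcal{A})$.

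The paper never resolves the density-of-domain issue you flag; it simply adopts your fallback and takes the generalized resolvent to be $\kappa\Delta(\mu)^{-1}$ on all of $X$. Your Laurent-coefficient and density argument in (b) makes precise the contradiction the paper only asserts via Proposition~\ref{pro:generalized_resolvent_continuation}. Your extra case $Ky\in X$, which yields a genuine generalized eigenvalue, goes slightly beyond the paper.
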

\begin{proof}
    Let $\lambda$ ($\Re(\lambda)>0$) belong to the resolvent set in the usual sense: $\lambda\in \varrho(\mathcal{A})$. Because $\varrho(\mathcal{A})$ is open in $\mathbb{C}$, we can find a (bounded) neighborhood of $\lambda$, $V_\lambda$ s.t. $\Delta(\mu):\mathcal{H}\to \mathcal{H}$ has a continuous inverse for any $\mu\in V_\lambda$ and $\{\Delta(\mu)^{-1}\psi\}_{\mu\in V_\lambda}$ is bounded in $\mathcal{H}$ for each $\psi\in \mathcal{H}$. We can define $\langle \Delta(\mu)^{-1}\cdot|:X\to X^\prime$ by
   \begin{equation}
       \langle \Delta(\mu)^{-1}\psi|\varphi\rangle :=(\Delta(\mu)^{-1}\psi,\varphi). 
   \end{equation}
   Since the topology of $X$ (resp., $X^\prime$) is stronger (resp., weaker) than the topology of $\mathcal{H}$, it is clear that $\lambda\in \hat{\varrho}(\mathcal{A})$ and $\langle \mathcal{R}_\tau(\lambda)\kappa(\psi)|\varphi\rangle=(\Delta(\lambda)^{-1}\psi,\varphi)$.
   
   Now, let $\lambda$ be an generalized eigenvalue. From the definition, there exists some $\psi\in X$ s.t. $(\id-e^{-\lambda\tau}\kappa^{-1}K^\times A(\lambda)\kappa)\psi=0$. Since $H$ is self-adjoint, $(\lambda- iH)^{-1}$ exists for any $\lambda$ with $\Re(\lambda)\not=0$. We get
   $$
   \begin{aligned}
   0&=(\id-e^{-\lambda\tau}\kappa^{-1}K^\times A(\lambda)\kappa)(\id- iH)\circ(\id- iH)^{-1}\psi\\
   &=(\id- iH-e^{-\lambda\tau}\kappa^{-1}K^\times \kappa)\circ (\id- iH)^{-1}\psi\\
   &= \Delta(\lambda)\circ (\id- iH)^{-1}\psi.
   \end{aligned}
   $$
   Hence, $\lambda\in \sigma_p(\mathcal{A})$ and $(\id- iH)^{-1}\psi\in \mathcal{H}$ solves the characteristic equation. It completes the proof of $\mathbf{(a)}$.
   
   For $\mathbf{(b)}$, let us assume that $\lambda$ is an isolated eigenvalue of $\mathcal{A}$ and $\Delta(\lambda)\psi=0$ for some $\psi\in \mathcal{H}$. Then, there exists some small enough neighborhood of $\lambda$, $V_\lambda$ s.t. each $\mu\not=\lambda$ in $V_\lambda$ belongs to the resolvent set $\varrho(\mathcal{A})$. As shown in the beginning of this proof, we have $\mu\in \hat{\varrho}(\mathcal{A})$. If $\lambda\in \hat{\varrho}(\mathcal{A})$, a contradiction can be induced from Proposition \ref{pro:generalized_resolvent_continuation} and $\Delta(\lambda)\psi=0$.
\end{proof}
\begin{remark}
    The first statement in the above theorem illustrates that, compared with the spectrum in the usual sense, the generalized spectrum "shrinks" on the right complex half-plane. In general, the continuous spectrum, if exists, will not be concluded in the generalized spectrum. As shown in the proof, this is due to the "strong-weak" topology interplay induced by the Gelfand triple $X\subset\mathcal{H}\subset X^\prime$. 
    
    On the other hand, $\mathbf{(b)}$ in Theorem \ref{thm:cplus} suggests that different from the shrinking of the essential spectrum, those isolated singularities of $\Delta(\lambda)^{-1}$ will persist as isolated singularities on the Riemann surface induced by the generalized retarded resolvent $\mathcal{R}_\tau(\lambda)$.   
\end{remark}
\begin{remark}
    In most of the applications to linear evolution equations of Schrödinger or Friedrichs type \cite{hislop,gp}, the perturbation operator $K:\mathcal{H}\to \mathcal{H}$ is usually assumed to be at least a (relatively) compact operator (with respect to $H$). It follows from Kato's perturbation theory that $\sigma_{ess}(\mathcal{A})\cap \{\lambda|\Re(\lambda)>0\}=\emptyset$ \cite{kato}. Applying Theorem \ref{thm:cplus}, we know that
    $$
    \hat{\sigma}(\mathcal{A})\cap \{\lambda|\Re(\lambda)>0\}=\sigma_p(\mathcal{A})\cap\{\lambda|\Re(\lambda)>0\}.
    $$
\end{remark}
We are now in a position to investigate properties of the generalized spectrum on the left half-plane. The following lemma will play an important role in the sequel.
\begin{lemma}\label{lemma:pointwise_define}
    If there exists some $\lambda$-neighborhood $V_\lambda$ such that $\kappa^{-1}K^\times A(\mu)\kappa:X\to X$ is bounded uniformly in $\mu\in V_\lambda$, then, $\lambda\in \hat{\varrho}(\mathcal{A})$ if and only if $\id-e^{-\lambda\tau}\kappa^{-1}K^{\times}A(\lambda)\kappa:X\to X$ has a continuous inverse.
\end{lemma}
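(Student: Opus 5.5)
Write $T(\mu):=e^{-\mu\tau}\kappa^{-1}K^\times A(\mu)\kappa:X\to X$, which is well defined by \textbf{(H6)}. By Definition~\ref{def:generalized_retarded_resolvent},
\begin{equation*}
\mathcal{R}_\tau(\mu)\kappa=A(\mu)\kappa\circ(\id-T(\mu))^{-1}.
\end{equation*}
The plan is to prove the two implications separately, in each case transferring continuity and boundedness between $(\id-T(\mu))^{-1}$ on $X$ and $\mathcal{R}_\tau(\mu)\kappa:X\to X'$. The transfer uses two facts: $A(\mu)\kappa$ is weakly continuous (the preceding lemma), and $(\lambda-iH^\times)A(\lambda)\kappa=\kappa$, as shown in the proof of Proposition~\ref{pro:generalized_eigenvalue}.

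\emph{Sufficiency.} Assume $\id-T(\lambda)$ has a continuous inverse on $X$.
\begin{itemize}
\item Write $\id-T(\mu)=(\id-T(\lambda))\bigl[\id-(\id-T(\lambda))^{-1}(T(\mu)-T(\lambda))\bigr]$.
\item By Proposition~\ref{pro:analytical_resolvent}, $\mu\mapsto\kappa^{-1}K^\times A(\mu)\kappa\psi$ is $X$-holomorphic, so $T(\mu)\to T(\lambda)$ pointwise.
\item The uniform boundedness hypothesis gives a fixed neighborhood $U$ of $0$ in $X$ with $T(\mu)U$ bounded for all $\mu\in V_\lambda$. Combined with pointwise holomorphy, this should make $T(\mu)-T(\lambda)=O(|\mu-\lambda|)$ uniformly on $U$: use a Cauchy integral over a small circle, where the integrand stays in a fixed bounded set. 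Hence $(\id-T(\lambda))^{-1}(T(\mu)-T(\lambda))$ maps $U$ into $\epsilon_\mu B$, with $B$ bounded and $\epsilon_\mu\to0$.
\item Invert the bracket by a Neumann series $\sum_n S_\mu^n$, $S_\mu:=(\id-T(\lambda))^{-1}(T(\mu)-T(\lambda))$, converging in the quasi-complete space $X$ (\textbf{(H2)}).
\end{itemize}
This step is the main obstacle. $X$ is only locally convex, so "small operator" must be controlled seminorm by seminorm. A workable route is to choose a continuous seminorm $p$ and show $p(S_\mu^n x)\le C\epsilon_\mu^{n-1}q(x)$ by iterating: $S_\mu$ maps $U$ into a bounded set, and bounded sets are absorbed by every seminorm. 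This yields $(\id-T(\mu))^{-1}$ continuous for $\mu$ near $\lambda$, with $\{(\id-T(\mu))^{-1}\psi\}$ bounded in $X$. Composing with the weakly continuous $A(\mu)\kappa$, and using Banach--Steinhaus on the barreled space $X$ for joint boundedness in $\mu$, shows that $\{\mathcal{R}_\tau(\mu)\kappa\psi\}_\mu$ is bounded in $X'$ and each $\mathcal{R}_\tau(\mu)\kappa$ is continuous and everywhere defined. Hence $\lambda\in\hat\varrho(\mathcal{A})$.

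\emph{Necessity.} Assume $\lambda\in\hat\varrho(\mathcal{A})$.
\begin{itemize}
\item Recover the inverse from the resolvent. Apply $\lambda-iH^\times$, so that
\begin{equation*}
(\lambda-iH^\times)\mathcal{R}_\tau(\lambda)\kappa\psi=\kappa(\id-T(\lambda))^{-1}\psi .
\end{equation*}
Equivalently,
\begin{equation*}
(\id-T(\lambda))^{-1}=\id+T(\lambda)(\id-T(\lambda))^{-1}=\id+e^{-\lambda\tau}\kappa^{-1}K^\times\mathcal{R}_\tau(\lambda)\kappa,
\end{equation*}
valid on the dense domain.
\item For continuity into $X$, test against $\varphi\in Y$. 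Then $\langle K^\times\mathcal{R}_\tau(\lambda)\kappa\psi|\varphi\rangle=\langle\mathcal{R}_\tau(\lambda)\kappa\psi|K^*\varphi\rangle$ with $K^*\varphi\in X$, by \textbf{(H5)}. This gives weak continuity of $\kappa^{-1}K^\times\mathcal{R}_\tau(\lambda)\kappa$.
\item Upgrade to continuity in $X$ using the uniform boundedness of $T(\mu)$ together with the equicontinuity of $\{\mathcal{R}_\tau(\mu)\kappa\}$ from Banach--Steinhaus.
\item Density of the domain, together with this continuity, extends the inverse to all of $X$ by quasi-completeness.
\item Injectivity follows because $\lambda\notin\hat\sigma_p(\mathcal{A})$; otherwise $\mathcal{R}_\tau$ could not be defined near $\lambda$ with bounded orbits, by the Laurent expansion (\ref{eq:Laurent series}).
\end{itemize}
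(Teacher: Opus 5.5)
\paragraph{Sufficiency.} This half follows the route the paper relies on. The paper only says the proof is ``almost identical'' to \cite[Proposition 3.18]{chibaB}, which factors $\id-T(\mu)=(\id-T(\lambda))(\id-S_\mu)$ and inverts the second factor by de Bruyn's theorem. Your iteration $S_\mu^nU\subset\epsilon_\mu(t\epsilon_\mu)^{n-1}B$, which uses that the bounded set $B$ is absorbed by $U$, is in effect a direct proof of de Bruyn's theorem here. The passage to $\mathcal{R}_\tau(\mu)\kappa$ via equicontinuity of $\{A(\mu)\kappa\}_\mu$ is also fine.

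One caveat: your Cauchy-integral estimate needs $\bigcup_{|\zeta-\lambda|=r}T(\zeta)U$ to be a single bounded set. The paper's definition of ``bounded uniformly'' only says that $U$ does not depend on $\mu$. Pointwise holomorphy plus Banach--Steinhaus gives equicontinuity, not one neighbourhood mapped into one bounded set. You should either state the stronger reading as your hypothesis or justify it.

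\paragraph{Necessity: the gap.} This half has a genuine gap at the ``upgrade'' step. From $\lambda\in\hat{\varrho}(\mathcal{A})$ you only know that $\mathcal{R}_\tau(\lambda)\kappa$ is continuous into $X'$ with the \emph{weak} dual topology, and only on the dense subspace $D=\mathrm{Ran}(\id-T(\lambda))$. Your identity $(\id-T(\lambda))^{-1}=\id+e^{-\lambda\tau}\kappa^{-1}K^\times\mathcal{R}_\tau(\lambda)\kappa$ is correct. But pairing with $\varphi\in Y$ only shows that $\psi\mapsto(\kappa^{-1}K^\times\mathcal{R}_\tau(\lambda)\kappa\psi,\varphi)$ is continuous. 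That is continuity into $X$ with the weak topology $\sigma(X,Y)$, which is far coarser than the topology of $X$.

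The mechanism you invoke does not bridge this.
\begin{itemize}
\item Equicontinuity of $\{\mathcal{R}_\tau(\mu)\kappa\}_\mu$ is again equicontinuity into weak $X'$. Moreover, Banach--Steinhaus needs a barreled domain, and $D$ is merely dense.
\item The only way to bring in boundedness of $T(\lambda)$ is to write $e^{-\lambda\tau}\kappa^{-1}K^\times\mathcal{R}_\tau(\lambda)\kappa=T(\lambda)(\id-T(\lambda))^{-1}$. That presupposes the continuity of $(\id-T(\lambda))^{-1}$ you are trying to prove, so the argument is circular.
\end{itemize}

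\paragraph{What is actually missing.} Factor $T(\lambda)=\tilde T\circ\pi$ through the normed space $Z=X/\ker p_U$, as in the proof of Theorem~\ref{thm:riesz schauder}. Then $(\id-T(\lambda))^{-1}=\id+\tilde T(\id-Q)^{-1}\pi$ on $D$, so a norm bound for $(\id-Q)^{-1}$ on $\pi(D)$ would suffice.

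If that bound fails, you obtain a bounded sequence $v_n$ with $p_U(v_n)\to1$ and $(\id-T(\lambda))v_n\to0$ in $X$. Continuity of $\mathcal{R}_\tau(\lambda)\kappa$ then gives only $A(\lambda)\kappa v_n\to0$ weakly, hence $v_n\to0$ in $\sigma(X,Y)$. This contradicts nothing unless you add a further ingredient, for example:
\begin{itemize}
\item relative compactness of bounded sets (true for the Montel space $\mathrm{Exp}$);
\item compactness of $T(\lambda)$;
\item a closed-graph argument, which needs completeness hypotheses beyond \textbf{(H2)}.
\end{itemize}
Your sketch contains no such step, so necessity is unproved as written.

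\paragraph{Smaller points.}
\begin{itemize}
\item Injectivity needs no Laurent argument. $\mathcal{R}_\tau(\lambda)$ is only defined when $\id-T(\lambda)$ is invertible, so injectivity is built into $\lambda\in\hat{\varrho}(\mathcal{A})$.
\item Extending a continuous map from $D$ to $X$ is not automatic in a merely quasi-complete space. Here, though, the factorization through $Z$ together with completeness of $\overline{B}$ would handle it.
\end{itemize}
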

\begin{proof}
    Notice that for any $\tau>0$, $e^{-\lambda\tau}\kappa^{-1}K^\times A(\lambda)\kappa$ is holomorphic in $\lambda$. The proof of this lemma is almost identical to \cite[Proposition 3.18]{chibaB}. 
\end{proof}
\begin{remark}
    In the proof of \cite[Proposition 3.18]{chibaB}, we will use a theorem by Bruyn \cite{bruyn} about the existence of continuous inverse operator on Hausdorff topological vector spaces. It is well-known that when $X$ is a Banach space,  $\id-L:X\to X$ has a continuous inverse if $||L||<1$. Here, $||\cdot||$ denotes the operator norm and the inverse operator can be represented as the Neumann series $(\id-L)^{-1}=\sum_{i=0}^{\infty}L^{n}$. As a generalization, the Bruyn's theorem states \cite{bruyn}:
    \begin{theorem}
        Let $L:E\to E$ be a continuous operator on a sequentially complete locally convex space $E$. Assume that there exist a bounded set $B\subset E$ and $0$-neighborhood $V_0$ such that for given $\varepsilon>0$, there is a positive integer $n(\varepsilon)$ such that $K^{n(\varepsilon)}(V_0)\subset \varepsilon B$. Then, $\id-L$ has a continuous inverse on $E$.
    \end{theorem}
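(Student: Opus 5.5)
The idea is to mimic the Banach-space Neumann series. We will build the inverse of $\id-L$ as
\[
S:=\sum_{n=0}^{\infty}L^{n}.
\]
Bruyn's hypothesis stands in for the contraction condition $\|L\|<1$ (note that $K^{n(\varepsilon)}$ in the statement should be read as $L^{n(\varepsilon)}$). The plan has three parts: show the series converges in $E$ uniformly on $V_0$, show the limit is continuous, and check that it is a two-sided inverse of $\id-L$.

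\emph{Geometric decay on $V_0$.} First we upgrade the hypothesis to geometric decay. Fix $\varepsilon\in(0,1)$ so small that $\varepsilon B\subset V_0$; this needs $B$ to be absorbed by $V_0$, which holds since $B$ is bounded. Put $m:=n(\varepsilon)$, so that $L^{m}(V_0)\subset\varepsilon B\subset V_0$ after shrinking. Iterating, we expect
\[
L^{km}(V_0)\subset \varepsilon^{k-1}L^{m}(V_0)\subset \varepsilon^{k}B .
\]
To make the iteration scale correctly, we choose $V_0$ absolutely convex (we may replace it by a smaller absolutely convex neighbourhood) and use linearity: $L^{m}(\varepsilon B)\subset \varepsilon L^{m}(\varepsilon^{-1}\cdot\varepsilon B)$ together with $\varepsilon B\subset V_0$.

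\emph{Convergence.} Fix a continuous seminorm $p$. Since $B$ is bounded, $c_p:=\sup_B p<\infty$. Using continuity of $L^{j}$ for $0\le j<m$, there is a $0$-neighbourhood $W\subset V_0$ with $L^{j}(W)\subset V_0$ for $j<m$. Then for $x\in W$,
\[
p\bigl(L^{km+j}x\bigr)\le \varepsilon^{k}c_p .
\]
Hence the partial sums $S_N x$ are Cauchy in $E$, uniformly for $x\in W$. Sequential completeness of $E$ gives a limit $Sx$ for $x\in W$, and by absorbency and linearity $Sx$ is defined for every $x\in E$.

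\emph{Continuity and inverse.} The uniform estimate gives
\[
p(Sx)\le \sum_{j<m}\sum_{k\ge0}\varepsilon^{k}c_p=\frac{m\,c_p}{1-\varepsilon}\quad\text{for all } x\in W,
\]
so $S$ maps the neighbourhood $W$ into a bounded set and is therefore continuous. The telescoping identity
\[
(\id-L)S_N=S_N(\id-L)=\id-L^{N+1},
\]
combined with $L^{N+1}x\to0$ from the decay estimate and continuity of $L$ (which lets us pass $L$ through the limit), yields $(\id-L)S=S(\id-L)=\id$.

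The main obstacle is the first step: making the scaling compatible so that $L^{m}$ maps $V_0$ into a \emph{small multiple of itself}. This is what requires choosing $\varepsilon B\subset V_0$ and taking $V_0$ absolutely convex. Once geometric decay on a fixed neighbourhood is in hand, the remaining steps are routine.
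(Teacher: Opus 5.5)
The paper gives no proof of this statement (it is quoted from de Bruyn \cite{bruyn}), so your argument has to stand on its own. The Neumann-series route is the right one, but two steps fail as written.

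\textbf{Geometric decay.} You use one $\varepsilon$ both for $\varepsilon B\subset V_0$ and for $L^{m}(V_0)\subset\varepsilon B$. This only gives $L^{m}(V_0)\subset V_0$, whereas the iteration $L^{km}(V_0)\subset\varepsilon^{k-1}L^{m}(V_0)$ requires $L^{m}(V_0)\subset\varepsilon V_0$. Linearity does not close the loop: from $B\subset\varepsilon^{-1}V_0$ you get only $L^{m}(B)\subset\varepsilon^{-1}L^{m}(V_0)\subset B$, i.e.\ no decay at all.

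The missing point is to decouple the absorption constant from the decay parameter, using that the hypothesis holds for \emph{every} $\varepsilon$. With $V_0$ absolutely convex, first fix $\delta>0$ with $\delta B\subset V_0$, then put $m:=n(\varepsilon\delta)$ with $0<\varepsilon<1$. Then
\[
L^{m}(V_0)\subset\varepsilon\delta B\subset\varepsilon V_0,\qquad L^{km}(V_0)\subset\varepsilon^{k-1}L^{m}(V_0)\subset\varepsilon^{k}\delta B\quad(k\ge1).
\]
Equivalently, replace $B$ by $\delta B$ and assume $B\subset V_0$.

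\textbf{Continuity.} Your bound $p(L^{km+j}x)\le\varepsilon^{k}c_p$ is false for $k=0$: for $x\in W$ you only know $L^{j}x\in V_0$, and $V_0$ need not be bounded. Accordingly, the claim that $S$ maps $W$ into a bounded set is wrong whenever $E$ is not normable, which is precisely the case of interest (e.g.\ $\mathrm{Exp}$). Indeed, $\id-L$ maps bounded sets to bounded sets, so $W=(\id-L)S(W)$ would be a bounded $0$-neighbourhood.

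The repair is to separate the finite head: write $S=\sum_{j<m}L^{j}+T$ with $Tx=\sum_{k\ge1}\sum_{j<m}L^{km+j}x$. The corrected decay gives $p(Tx)\le m\delta c_p\,\varepsilon/(1-\varepsilon)$ for all $x\in W$ and every continuous seminorm $p$. Hence $T(W)$ is bounded and $T$ is continuous, and the head is a finite sum of continuous operators.

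With these two corrections, the remaining steps go through as you wrote them: Cauchy partial sums plus sequential completeness, extension to all of $E$ by absorbency, and the telescoping identity with $L^{N+1}x\to0$.
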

    By $\mathbf{(H2)}$, $X$ is quasi-complete barreled, hence sequentially complete \cite{SW}. Therefore, the above theorem can be applied in our proofs.
\end{remark}
In the classical spectral theory for linear operators in Banach spaces, descriptions of continuous spectrum sometimes associate with the so-called approximate spectrum. It is known that even though we cannot find a precise eigenfunction with respect to an element in the continuous spectrum, there exists a sequence of functions to approximate such a process.

In particular, the proof of existence of these approximate eigenfunctions is trivial when the space is normable. It is worth mentioning that we can also show such a thing in the generalized spectral theory on Gelfand triple. 

\begin{definition}\label{def:app}
    $\lambda$ is called a generalized approximate eigenvalue if there exists some net $(\psi_i)_{i\in \Lambda}\subset X$ such that 
    \begin{equation}
        (\id-e^{-\lambda\tau}\kappa^{-1}K^\times A(\lambda)\kappa)\psi_i\to 0.
    \end{equation}
 Denoted by $\hat{\sigma}_{app}(\mathcal{A})$, the collection of all approximate eigenvalues is called the approximate spectrum. 
\end{definition}
Applying Lemma \ref{lemma:pointwise_define} and the closed graph theorem on topological vector spaces, we have the following.
\begin{proposition}\label{thm:app_spectrum}
    Assume that $\kappa^{-1}K^\times A(\lambda)\kappa:X\to X$ is bounded, uniformly in $\lambda$ and $X$ is a complete space (a Fréchet space, for example). Then, for any $\tau\geq 0$, we have $\hat{\sigma}_c(\mathcal{A})\subset \hat{\sigma}_{app}(\mathcal{A})$. In other words, for any $\lambda\in \hat{\sigma}_c(\mathcal{A})$, there exists $(\psi_i)_{i\in \Lambda}\subset X$ such that $\lim_{i\in \Lambda}(\id-e^{-\lambda\tau}K^\times A(\lambda))\kappa(\psi_i)=0$.
\end{proposition}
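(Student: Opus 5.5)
Set $T(\lambda):=\id-e^{-\lambda\tau}\kappa^{-1}K^\times A(\lambda)\kappa:X\to X$, a continuous linear operator on the complete space $X$ by \textbf{(H6)} and the boundedness hypothesis. I argue by contradiction. Fix $\lambda\in\hat{\sigma}_c(\mathcal{A})$ and suppose $\lambda\notin\hat{\sigma}_{app}(\mathcal{A})$. The aim is to show that $T(\lambda)$ then has a continuous inverse on $X$. By Lemma \ref{lemma:pointwise_define} this gives $\lambda\in\hat{\varrho}(\mathcal{A})$, which contradicts $\lambda\in\hat{\sigma}(\mathcal{A})$. Note that the uniform boundedness assumption is exactly the hypothesis of Lemma \ref{lemma:pointwise_define}, so that lemma applies. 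Since $\lambda\in\hat{\sigma}_c(\mathcal{A})$, we also know $\lambda\notin\hat{\sigma}_p(\mathcal{A})$, so $T(\lambda)$ is injective. We know $\lambda\notin\hat{\sigma}_r(\mathcal{A})$ as well, so $\mathrm{Ran}\,T(\lambda)=\mathrm{Dom}(\mathcal{R}_\tau(\lambda)\kappa)$ (up to $\kappa$) is dense in $X$.

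\emph{Step 1: the inverse is continuous on the range.} Let $S:=T(\lambda)^{-1}:\mathrm{Ran}\,T(\lambda)\to X$. If $S$ were not continuous at $0$, there would be a $0$-neighborhood $U$ in $X$ and a net $(y_i)$ in the range with $y_i\to0$ but $S y_i\notin U$. I would rescale using a continuous seminorm $p$ with $\{p<1\}\subset U$. Setting $\psi_i:=Sy_i/p(Sy_i)$, we get $p(\psi_i)=1$ and $T(\lambda)\psi_i=y_i/p(Sy_i)$. Since $p(Sy_i)\ge1$, the net $T(\lambda)\psi_i$ tends to $0$. The nontriviality condition $p(\psi_i)=1$ is what keeps this from being trivial (with $\psi_i=0$ the condition in Definition \ref{def:app} would hold for every $\lambda$), so I read Definition \ref{def:app} as requiring a normalized net in this sense. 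Then $\lambda\in\hat{\sigma}_{app}(\mathcal{A})$, contrary to assumption. Hence $S$ is continuous on its dense domain.

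\emph{Step 2: surjectivity.} This is where completeness of $X$ and the closed graph theorem enter, and I expect it to be the main obstacle. $T(\lambda)$ is continuous, so its graph is closed, and the graph of $S$ is the flip of that graph, hence also closed in $X\times X$. A continuous linear map defined on a dense subspace of a complete space extends uniquely to a continuous $\tilde S:X\to X$. For any $y\in X$, take a net $y_i\to y$ in the range. Then $Sy_i\to\tilde S y$ by continuity, and $T(\lambda)Sy_i=y_i\to y$. Closedness of the graph of $T(\lambda)$ then gives $T(\lambda)\tilde S y=y$. So $T(\lambda)$ is onto, and $\tilde S=S$ is a continuous inverse. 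Here I use only completeness of $X$ (for the extension) and continuity of $T(\lambda)$, not the full closed graph theorem for barreled or Fréchet spaces. The delicate point is that the extension and closedness arguments must use the same topology of $X$; both hold because $T(\lambda)$ is continuous on $X$.

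\emph{Conclusion.} Combining the two steps with Lemma \ref{lemma:pointwise_define} gives $\lambda\in\hat{\varrho}(\mathcal{A})$, a contradiction. Hence $\hat{\sigma}_c(\mathcal{A})\subset\hat{\sigma}_{app}(\mathcal{A})$. The final reformulation in the statement follows by applying $\kappa$: since $\kappa$ is continuous, $\kappa T(\lambda)\psi_i=(\id-e^{-\lambda\tau}K^\times A(\lambda))\kappa(\psi_i)\to0$. The argument works for every fixed $\tau\ge0$, because $e^{-\lambda\tau}$ is just a nonzero scalar.
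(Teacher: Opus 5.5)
Your proof is correct, but it runs in the opposite direction from the paper's.

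\textbf{The paper's route.} The paper argues directly. From Lemma~\ref{lemma:pointwise_define}, Maeda's lemma and the closed graph theorem, it concludes that for $\lambda\in\hat{\sigma}_c(\mathcal{A})$ the range of $T(\lambda)$ is a \emph{proper} dense subspace, hence not closed. It then picks a net $(\psi_i)$ with $T(\lambda)\psi_i\to\tilde\psi\notin\mathrm{Ran}\,T(\lambda)$. Completeness shows that $(\psi_i)$ cannot be Cauchy, so the differences $\psi_{i_1}-\psi_{i_2}$ stay outside a fixed $0$-neighborhood $U_0$. These differences form the approximate eigen-net.

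\textbf{Your route.} You show that if no normalized approximate eigen-net exists, then $T(\lambda)^{-1}$ is continuous on its dense range. You then use completeness only to extend this inverse and deduce surjectivity.

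\textbf{Comparison.} Both arguments express the same underlying fact from opposite sides: on a complete space, an injective continuous operator whose inverse is continuous on its range has closed range. Both also use Lemma~\ref{lemma:pointwise_define} in the same direction. Both read Definition~\ref{def:app} with the same implicit nontriviality condition: your $p(\psi_i)=1$ plays the role of the paper's $\varphi_i\notin U_0$.

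\textbf{What your route buys.} Your Step~1 directly handles the case where $T(\lambda)$ is bijective but has a discontinuous inverse. The paper must rule out that case with a closed graph or open mapping theorem in order to assert that the range is proper. That is automatic for Fréchet $X$, but it is an extra ingredient beyond completeness, as the paper's own remark about spaces satisfying a closed graph theorem concedes. So your argument needs neither Maeda's lemma nor any closed graph theorem, and works for any complete $X$.

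\textbf{What the paper's route buys.} It gives a more explicit picture. The approximate eigenvectors are tied directly to the non-closedness of the range, in analogy with the classical description of continuous spectrum.
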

\begin{proof}
    Lemma \ref{lemma:pointwise_define} implies that
    $$
    \hat{\sigma}(\mathcal{A})\backslash \hat{\sigma}_p(\mathcal{A})\subset \{\lambda|(\id-e^{-\lambda\tau}\kappa^{-1}K^\times A(\lambda)\kappa)^{-1}:X\to X  \ \text{is not continuous }\}.
    $$
    Since $\kappa^{-1}K^\times A(\lambda)\kappa$ is bounded on $X$ for each $\lambda$, by applying \cite[Lemma 3]{maeda}, we can obtain that ${\sigma_W}(\kappa^{-1}K^\times A(\lambda)\kappa)=\sigma_{C}(\kappa^{-1}K^\times A(\lambda)\kappa)$. Here $\sigma_W(\cdot):=\mathbb{C}\backslash\varrho_W{(\cdot)}$ denotes the spectrum of a linear operator in the sense of Waelbroeck \cite{wael} and $\sigma_C(\cdot):=\mathbb{C}\backslash\varrho_C{(\cdot)}$ denotes the spectrum of a linear operator in the classical sense. More precisely, we have 
    \begin{equation}
    \begin{aligned}
    \varrho_W{(L)}:=&\{\lambda|\text{There exists } V_\lambda\ \text{s.t. } (\mu-L)^{-1}\ \text{is continuous for }\mu\in V_\lambda\ \\
    &\text{and }\{(\mu-L)^{-1}\psi\}_{\mu\in V_\lambda}\ \text{is bounded}\};
    \end{aligned}
    \end{equation}
    \begin{equation}
    \varrho_C{(L)}:=\{\lambda|(\lambda-L)^{-1}\ \text{is continuous}\}.
    \end{equation}
    It follows from the completeness of $X$ that the closed graph theorem holds. Here, we mention that the assumption of $X$ in this theorem can be relaxed to any topological vector space, such that the closed graph theorem is satisfied. We refer a recent work \cite{noll} about this topic. 

    For any $\lambda\in \hat{\sigma}_c(\mathcal{A})$, we can obtain that $\text{Ran}(\id-e^{-\lambda\tau}\kappa^{-1}K^\times A(\lambda)\kappa)$ is a proper dense subset of $X$. In particular, it is not closed. There exists a net $(\psi_i)_{i\in \Lambda}\subset X$ such that $(\id-e^{-\lambda\tau}\kappa^{-1}K^\times A(\lambda)\kappa)\psi_i\to \tilde{\psi}$ and $\tilde{\psi}\not\in \text{Ran}(\id-e^{-\lambda\tau}\kappa^{-1}K^\times A(\lambda)\kappa)$. It is clear that $(\psi_i)_{i\in \Lambda}$ is not a Cauchy net. Hence, there exists a $0$-neighborhood, $U_0$ such that for any $i\in \Lambda$, there are $i_1,\ i_2\geq i$ satisfying $\psi_{i_1}-\psi_{i_2}\not\in U_0$. For the net $(\varphi_i)_{i\in \Lambda}:=(\psi_{i_1}-\psi_{i_2})_{i\in \Lambda}\subset X\backslash U_0$, we have 
    $$
    (\id-e^{-\lambda\tau}\kappa^{-1}K^\times A(\lambda)\kappa)\varphi_i=(\id-e^{-\lambda\tau}\kappa^{-1}K^\times A(\lambda)\kappa)(\psi_{i_1}-\psi_{i_2})\to 0.
    $$
    Consequently, we have $\lambda\in \hat{\sigma}_{app}(\mathcal{A})$ in the sense of Definition \ref{def:app}.
\end{proof}
The following theorem states that under compactness condition of $\kappa^{-1}K^\times A(\lambda)\kappa$, the generalized spectrum associated with the delay equation consists of only generalized eigenvalues ($\hat{\sigma}_c(\mathcal{A})=\hat{\sigma}_r(\mathcal{A})=\emptyset$). Inspired by Ringrose \cite{ringrose}, the proof is completed by associating $\kappa^{-1}K^\times A(\lambda)\kappa$ with a linear operator on a Banach space, so that the classical Riesz-Schauder theorem is applicable to show the disappearance of continuous/residual spectrum. Here we point out that different from the setting in Proposition \ref{thm:app_spectrum}, we do not need to additionally assume the completeness of $X$. 
\begin{theorem}\label{thm:riesz schauder}
    Assume that $\kappa^{-1}K^\times A(\lambda)\kappa:X\to X$ is compact, uniformly in $\lambda$. We have $\hat{\sigma}(\mathcal{A})=\hat{\sigma}_p(\mathcal{A})$. 
\end{theorem}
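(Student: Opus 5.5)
Write $T(\lambda):=e^{-\lambda\tau}\kappa^{-1}K^\times A(\lambda)\kappa:X\to X$. The inclusion $\hat{\sigma}_p(\mathcal{A})\subset\hat{\sigma}(\mathcal{A})$ is immediate: if $\lambda$ is a generalized eigenvalue, then $\id-T(\lambda)$ is not injective on $X$. By \textbf{(H6)}, this is equivalent to non-injectivity of $\id-e^{-\lambda\tau}K^\times A(\lambda)$ on $\kappa X$, so $\mathcal{R}_\tau(\lambda)$ is not defined and $\lambda\notin\hat{\varrho}(\mathcal{A})$. The real content is the converse: if $\lambda\in\Omega$ is not a generalized eigenvalue, then $\lambda\in\hat{\varrho}(\mathcal{A})$. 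A compact operator in our sense is in particular bounded uniformly in $\lambda$, since a relatively compact set is bounded. Hence Lemma \ref{lemma:pointwise_define} applies, and it suffices to show that $\id-T(\lambda)$ has a continuous inverse on $X$ whenever it is injective. This is a Riesz--Schauder (Fredholm alternative) statement for a compact operator on the locally convex space $X$. Following Ringrose, I will reduce it to the Banach space case.

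\textbf{Construction of an auxiliary Banach space.} Fix a closed absolutely convex $0$-neighborhood $U\subset X$ such that $T(\lambda)U$ is relatively compact. Let $B$ be the closed absolutely convex hull of $\overline{T(\lambda)U}$. This set is compact: the closed absolutely convex hull of a compact set is compact in a quasi-complete space, and $X$ is quasi-complete by \textbf{(H2)}. Consider the space $X_B=\bigcup_n nB$ with the Minkowski gauge $p_B$. Since $B$ is a bounded Banach disc (complete because $B$ is compact), $(X_B,p_B)$ is a Banach space continuously embedded in $X$. Let $q_U$ be the gauge of $U$, and let $\widehat{X}_U$ be the completion of $X/q_U^{-1}(0)$, also a Banach space. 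Then $T(\lambda)$ factors as
\[
X\xrightarrow{\;\pi_U\;}\widehat{X}_U\xrightarrow{\;\widehat{T}\;}X_B\xrightarrow{\;j\;}X .
\]
Here $\widehat{T}$ maps the unit ball into the compact set $B$. I will check that compactness survives in the $p_B$-norm, possibly after shrinking: either use that compact subsets of $X$ contained in $B$ are $p_B$-compact when one replaces $B$ by a slightly larger disc, or pass to the operator $S:=\pi_U\circ j\circ\widehat{T}$ on $\widehat{X}_U$, which is compact as a composition with a compact map.

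\textbf{Transfer of the Fredholm alternative.} Apply the classical Riesz--Schauder theorem to the compact operator $S$ on the Banach space $\widehat{X}_U$. The key identities are $\ker(\id-T)\cong\ker(\id-S)$ and
\[
(\id-T)^{-1}=\id+j\widehat{T}(\id-S)^{-1}\pi_U .
\]
These follow from the intertwining relations $\pi_U j\widehat{T}=S$ and $T=j\widehat{T}\pi_U$. From the injectivity of $\id-T$ on $X$, I will deduce the injectivity of $\id-S$. Indeed, if $(\id-S)\xi=0$, put $x:=j\widehat{T}\xi\in X$. Then $(\id-T)x=j\widehat{T}(\id-S)\xi=0$, so $x=0$, hence $S\xi=\pi_U x=0$ and $\xi=S\xi=0$. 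The Fredholm alternative then makes $\id-S$ boundedly invertible on $\widehat{X}_U$. The displayed formula defines a continuous two-sided inverse of $\id-T$ on $X$: it is continuous as a composition of continuous maps, and it is a verified inverse by direct multiplication using the intertwining relations. Lemma \ref{lemma:pointwise_define} then gives $\lambda\in\hat{\varrho}(\mathcal{A})$, hence $\hat{\sigma}(\mathcal{A})\subset\hat{\sigma}_p(\mathcal{A})$. In particular $\hat{\sigma}_c(\mathcal{A})=\hat{\sigma}_r(\mathcal{A})=\emptyset$.

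\textbf{Main obstacle.} I expect the main difficulty to be the factorization step: making sure that $\widehat{T}$ genuinely lands continuously in $X_B$, and that the auxiliary operator is compact in a Banach norm. The passage to the completion $\widehat{X}_U$ needs care, because $X$ itself is only quasi-complete and $q_U$ may be degenerate. Uniformity in $\lambda$ is used only to invoke Lemma \ref{lemma:pointwise_define}; the argument above fixes $\lambda$.
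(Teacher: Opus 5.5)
Your proposal is correct and follows essentially the paper's argument. The paper likewise passes, following Ringrose, to the Banach completion $Z_0$ of $X/p^{-1}(0)$, where $p$ is the gauge of the compactness neighborhood. It applies Riesz--Schauder to the induced compact operator $Q_0$ (your $S$) and lifts the inverse back as $y_0=x+e^{-\lambda\tau}C(\lambda)y$ with $[y]=(\id-Q_0)^{-1}[x]$; this is exactly your formula $\id+j\widehat{T}(\id-S)^{-1}\pi_U$.

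Your factorization through the Banach disc $X_B$ makes rigorous two steps the paper states tersely: that $\mathrm{Ran}(Q_0)\subset Z$, and that the $p$-estimate gives continuity in the topology of $X$. Of your two options for compactness, only the second works: $S=\pi_U j\widehat{T}$ maps the unit ball into the compact set $\pi_U(B)$. The first option, that compact subsets of $B$ become $p_B$-compact after enlarging the disc, is not valid in general. Also, $B=\overline{T(\lambda)U}$ is already absolutely convex and compact, so quasi-completeness is not needed at that point.
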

\begin{proof}
    There exists some (bounded) neighborhood $V$ such that $\kappa^{-1}K^\times A(\lambda)\kappa(V)$ is relatively compact for any $\lambda$. Induced from $V$, a continuous seminorm of $X$ can be defined by
    \begin{equation}
        p(x)=\inf\{|\lambda|\mid x\in \lambda V\}. 
    \end{equation}
    $M=\{x\in X\mid p(x)=0\}$ is a closed subspace of $X$ and $Z=X/M$ is a normed space endowed with a norm $P([x])=p(x)$. $[\cdot]=\cdot+M$ denotes each equivalence class of $x$. We denote by $Z_0$ a Banach space which is the completion of $Z$. A linear operator $Q(\tau,\lambda):Z\to Z$ is defined by 
    $$
    Q(\tau,\lambda)[x]=e^{-\lambda\tau}[\kappa^{-1}K^\times A(\lambda)\kappa(x)], \quad x\in X.
    $$
    We denote by $Q_0(\tau,\lambda):Z_0\to Z_0$ the continuous extension of $Q(\tau,\lambda)$. It is easy to verify that $Q_0(\tau,\lambda)$ is compact (see \cite{ringrose}). Henceforth, we denote $\kappa^{-1}K^\times A(\lambda)\kappa$ by $C(\lambda)$ for simplicity. 

\noindent \textbf{Step one.} Firstly, we show the one-to-one correspondence of eigenvalues of $e^{-\lambda\tau}C(\lambda)$ and $Q_0(\tau,\lambda)$. In particular, we can verify that if $\lambda$ is a generalized eigenvalue, then integer $1$ is an eigenvalue of $Q_0(\tau,\lambda)$, and vice versa. Indeed, if there exists nonzero $x\in X$ which satisfies  
$$
x-e^{-\lambda\tau}C(\lambda)x=0,
$$ 
we have 
$$
Q_0(\tau,\lambda)[x]=e^{-\lambda\tau}[C(\lambda)x]=[x].
$$
If $x\in M$, $\mu x\in M\subset V$ for any $\mu$. It contradicts with the boundedness of $V$. Here, from the compactness of $C(\lambda)$, we can also obtain that $C(\lambda)(M)=\{0\}$.

Conversely, it is easy to verify that $\text{Ran}(Q_0(\tau,\lambda))\subset Z$. If $1$ is an eigenvalue of $Q_0(\tau,\lambda)$, there exists $y\notin M$ such that $Q_0(\tau,\lambda)[y]=[y]$ and $C(\lambda)(y)-y\in M$. Hence $\lambda$ is a generalized eigenvalue with
$$
(\id-e^{-\lambda\tau}C(\lambda))C(\lambda)(y)=(\id-e^{-\lambda\tau}C(\lambda))\circ(C(\lambda)(y)-y+y)=0.
$$
\noindent \textbf{Step two.} Assume that $\lambda$ is not a generalized eigenvalue. From Lemma \ref{lemma:pointwise_define}, it suffices to prove $\id-e^{-\lambda\tau}C(\lambda):X\to X$ has a continuous inverse. Indeed, it is clear that $1$ is not an eigenvalue of $Q_0(\tau,\lambda):Z_0\to Z_0$. Since $Z_0$ is a Banach space and $Q_0(\tau,\lambda)$ is compact, Riesz-Schauder theorem implies that $\id-Q_0(\tau,\lambda)$ has a continuous inverse. Since 
$$
(\id-Q_0(\tau,\lambda))^{-1}=\id+Q_0(\tau,\lambda)(\id-Q_0(\tau,\lambda))^{-1},
$$
we have $(\id-Q_0(\tau,\lambda))^{-1}(Z)\subset Z$. For any $x\in X$, there exists some $y\in X$ such that $(\id-Q_0(\tau,\lambda))^{-1}([x])=[y]$. Hence,
$$
[x]=[y]-Q_0(\tau,\lambda)[y]=[y-e^{-\lambda\tau}C(\lambda)y]
$$
and $x-y+e^{-\lambda\tau}C(\lambda)y\in M$. For $x$ there exists $y_0=x+e^{-\lambda\tau}C(\lambda)y$ satisfying $(\id-e^{-\lambda\tau}C(\lambda))y_0=x$. Hence, $(\id-e^{-\lambda\tau}C(\lambda))^{-1}:X\to X$ exists and is fully defined. Now we are in a position to prove the continuity of the inverse operator. For $x\in X$, we have 
$$
(\id-Q_0(\tau,\lambda))^{-1}([x])=[y]=[y_0]=[(\id-e^{-\lambda\tau}C(\lambda))^{-1}x].
$$
Then
$$
\begin{aligned}
p((\id-e^{-\lambda\tau}C(\lambda))^{-1}x)=P((\id-Q_0(\tau,\lambda))^{-1}([x]))\leq ||\id-Q_0(\tau,\lambda)||P([x]).
\end{aligned}
$$
The continuity holds from this estimate and compactness of $C(\lambda)$. 
\end{proof}

\subsection{Asymptotic behavior on Gelfand triple}
Throughout this subsection, we investigate asymptotic behavior of solutions to the linear delay equation (\ref{eq:aFDE}). 

Applying the spectral properties proposed in the preceding section, the integral contour in the inverse Laplace transform can be deformed on a "better" Riemann surface induced by a Gelfand triple. In particular, we obtain exponential decay states of solution, where decay rate is determined by the generalized eigenvalues on the left complex half-plane. This is associated with the so-called Landau damping in the linear level. 

Recall that solution of (\ref{eq:aFDE}), $u(t)$ can be expressed using an inverse Laplace formula
\begin{equation}\label{eq:sec3_hilbert}
\begin{aligned}
    (u(t),\varphi)&=\frac{1}{2\pi i}\int_{\Gamma}e^{\lambda t}(\Delta(\lambda)^{-1}x,\varphi)\di \lambda+\frac{1}{2\pi i}\int_{\Gamma}e^{\lambda t}(\Delta(\lambda)^{-1}\hat{K}(\lambda-A_L)^{-1}f,\varphi)\di \lambda.
\end{aligned}
\end{equation}
Here we can find large enough $a>0$ such that $\Gamma=\{\lambda\in \mathbb{C}|\Re(\lambda)=a\}$. $(x,f)$ are initial data.

From Proposition \ref{pro:generalized_resolvent_continuation}, we know that the generalized retarded resolvent operator $\mathcal{R}_\tau(\lambda):iX\to X^\prime$ extends the retarded resolvent operator $\Delta(\lambda)^{-1}$ and $\langle\mathcal{R}_\tau(\lambda)i\psi|\varphi\rangle=(\Delta(\lambda)^{-1}\psi,\varphi)$ holds on $\mathbb{C}_+$. We get 
\begin{equation}\label{eq:sec3_gelfand}
\begin{aligned}
     (u(t),\varphi)=\langle \kappa(u(t))|\varphi\rangle &= \frac{1}{2\pi i}\int_{\Gamma}e^{\lambda t}\langle \mathcal{R}_\tau(\lambda)\kappa(x)|\varphi\rangle\di \lambda\\
    &\quad+\frac{1}{2\pi i}\int_{\Gamma}e^{\lambda t}\langle \mathcal{R}_\tau(\lambda)\kappa\circ\hat{K}(\lambda-A_L)^{-1}f|\varphi\rangle\di \lambda,
\end{aligned}
\end{equation}
for any $x,\varphi\in X$. Here, because
$$\hat{K}(\lambda-A_L)^{-1}f=\int_{-\tau}^{0}e^{-\lambda(\tau+s)}Kf(s)\di s ,$$
the initial condition $f$ is assumed to satisfy $Kf(s)\in X$ for any $s\in [-\tau,0]$ so that (\ref{eq:sec3_gelfand}) is well defined. For any $x\in X$, recall that the generalized retarded resolvent operator $\mathcal{R}_\tau(\lambda)$ has Laurent series expansions near any isolated singularity $\lambda_0$ given by 
$$
\mathcal{R_\tau}(\lambda)\kappa(x)=\sum_{j=-\infty}^{+\infty}\frac{1}{(\lambda_0-\lambda)^j}E_j\kappa(x).
$$
The operators $E_j$ are $X^\prime$-valued and independent of $\lambda$. The above series converges in the strong dual topology. 

Let $\lambda_0$ be a generalized eigenvalue and $\Gamma_0$ be a simple closed curve that encloses $\lambda_0$. By the residue theorem of weak$^\ast$ Pettis integrals, we obtain 
\begin{equation}
    \frac{1}{2\pi  i}\int_{\Gamma_0}e^{\lambda t} \mathcal{R}_\tau(\lambda)\kappa(x)\di \lambda=-\sum_{j=0}^{\infty}\frac{(-t)^j}{j!}e^{\lambda_0 t}E_{j+1}\kappa(x),
\end{equation}
for any $x\in X$.

Under basic setting $\textbf{(H1-6)}$ and the compactness assumption, it follows from Theorem \ref{thm:riesz schauder} that singularities of $\mathcal{R}_\tau(\lambda) \kappa$ are (isolated) generalized eigenvalues $\hat{\sigma}_p(\mathcal{A})$. In particular, continuous singularities on the Riemann surface induced by $(\lambda- iH)^{-1}$ disappear in the generalized sense. Combined with these findings, we can obtain the following corollary of Theorem \ref{thm:riesz schauder}.
\begin{corollary}\label{coro:eigenexpansion}
    Assume that $\kappa^{-1} K^\times A(\lambda)\kappa:X\to X$ is compact, uniformly in $\lambda$. Then we have the following representation of solution.
    \begin{equation}\label{eq:eigenexpansion}
\begin{aligned}
    (u(t),\psi)
    &=\frac{1}{2\pi i}\int_{\Gamma^\prime}e^{\lambda t}\langle \mathcal{R}_\tau(\lambda)\kappa(x+\hat{K}(\lambda-A_L)^{-1}f)|\psi\rangle\di \lambda \\
    &\quad-\sum_{k=1}^{\infty}\sum_{j=0}^{\infty}\sum_{m=0}^{j}\frac{(-t)^{j-m}}{(j-m)!}e^{\lambda_k t}\langle E_{j+1}^k\kappa(\hat{K}(\lambda_k-A_L)^{-m-1}f)|\psi\rangle\\
    &\quad-\sum_{k=1}^{\infty}\sum_{j=0}^{\infty}\frac{(-t)^j}{j!}e^{\lambda_k t}\langle E_{j+1}^k \kappa(x)|\psi\rangle.
\end{aligned}
\end{equation}
Here, $\Gamma_1$ lies on the left half-plane. $\{\lambda_k\}_{k\in\mathbb{N}}$ are generalized eigenvalues. 
\end{corollary}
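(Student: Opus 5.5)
The plan is to start from the Gelfand-triple form of the inverse Laplace formula (\ref{eq:sec3_gelfand}) on the vertical line $\Gamma=\{\Re\lambda=a\}$, and then shift the contour to the left.

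\textbf{Setup.} First, group the two integrals into a single integrand $e^{\lambda t}\langle \mathcal{R}_\tau(\lambda)\kappa(x+\hat{K}(\lambda-A_L)^{-1}f)|\psi\rangle$. This requires $Kf(s)\in X$, so that $\hat{K}(\lambda-A_L)^{-1}f=\int_{-\tau}^0 e^{-\lambda(\tau+s)}Kf(s)\,\di s$ lies in $X$ and is an entire $X$-valued function of $\lambda$.

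\textbf{Contour deformation.} Fix a contour $\Gamma'$ in the left half-plane and consider the region between $\Gamma$ and $\Gamma'$, truncated at heights $\pm R$.
\begin{itemize}
\item By Theorem~\ref{thm:riesz schauder}, the only singularities of $\mathcal{R}_\tau(\lambda)\kappa$ in this region are generalized eigenvalues $\lambda_k$.
\item These are isolated: $\id-e^{-\lambda\tau}C(\lambda)$ is a holomorphic Fredholm-type family through the Banach-space reduction $Q_0(\tau,\lambda)$, so analytic Fredholm theory applies.
\item Away from the $\lambda_k$, Proposition~\ref{pro:analytical_resolvent} gives holomorphy of the integrand.
\item Cauchy's theorem for weak$^\ast$ Pettis integrals (available by (H2)) equates the integral over $\Gamma$ with the integral over $\Gamma'$ plus the sum of contour integrals around small circles $\Gamma_k$ enclosing each $\lambda_k$.
\item The horizontal segments are discarded as $R\to\infty$. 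The justification is that the left half-plane formula (\ref{eq:Alambda}) and the $e^{-\lambda\tau}$ factor give decay of $\langle A(\lambda)\kappa\psi|\varphi\rangle$ along horizontal lines. Equivalently, one assumes a suitable decay condition, as is implicit in the statement.
\end{itemize}

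\textbf{Residue computation.} For the $x$-part, insert the Laurent series (\ref{eq:Laurent series}) at $\lambda_k$ and use the residue formula stated just before the corollary. This gives $-\sum_j \frac{(-t)^j}{j!}e^{\lambda_k t}E^k_{j+1}\kappa(x)$.

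For the $f$-part, the integrand has an additional holomorphic factor $g(\lambda)=\hat{K}(\lambda-A_L)^{-1}f$. Expand it in Taylor series at $\lambda_k$:
\begin{equation*}
g(\lambda)=\sum_{m\ge 0}(\lambda-\lambda_k)^m\, \frac{g^{(m)}(\lambda_k)}{m!}, \qquad \frac{g^{(m)}(\lambda_k)}{m!}=(-1)^m\hat{K}(\lambda_k-A_L)^{-m-1}f.
\end{equation*}
The identity for the coefficients is the resolvent identity for $A_L$, whose spectrum is empty. Multiply this Taylor series by the Laurent series and by the Taylor series of $e^{\lambda t}$, then collect the coefficient of $(\lambda-\lambda_k)^{-1}$. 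This produces the double sum over $j$ and $m\le j$ with the factor $\frac{(-t)^{j-m}}{(j-m)!}$. The signs from $(\lambda_0-\lambda)^{-j}$ absorb the $(-1)^m$.

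Interchanging the residue with the pairing against $\psi$ is justified because the Laurent series converges in the strong dual topology.

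\textbf{Main obstacle.} I expect the hardest step to be the analytic bookkeeping of the deformation itself, namely:
\begin{itemize}
\item controlling $\mathcal{R}_\tau(\lambda)\kappa$ for large $|\Im\lambda|$ so that the horizontal pieces vanish;
\item ensuring only countably many $\lambda_k$ lie between $\Gamma$ and $\Gamma'$, so that the (possibly infinite) sum over $k$ converges.
\end{itemize}
My first attempt would be a Neumann-series bound. Show that $e^{-\lambda\tau}C(\lambda)$ becomes small in the seminorm $p$ as $|\Im\lambda|\to\infty$ within a vertical strip, via the Riemann–Lebesgue-type decay of the Cauchy integral and of $E[\psi,\varphi](\lambda/i)$. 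This would keep the $\lambda_k$ in bounded regions and the integrand uniformly bounded on the horizontal segments.
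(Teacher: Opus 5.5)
Your proposal follows the same route as the paper, whose proof is only the remark that the formula follows from contour deformation and the residue theorem. Your residue bookkeeping is correct: the Taylor coefficients $(-1)^m\hat{K}(\lambda_k-A_L)^{-m-1}f$, multiplied against the Laurent series of $\mathcal{R}_\tau(\lambda)\kappa$ and the expansion of $e^{\lambda t}$, reproduce the stated sums and signs. The points you flag (vanishing of the horizontal segments and summability over $k$) are not addressed in the paper either, and the paper simply asserts that the $\lambda_k$ are isolated. So your analytic-Fredholm remark goes beyond the paper's argument rather than falling short of it, though it would still need norm-holomorphy of $Q_0(\tau,\lambda)$ on $Z_0$, which the uniform-compactness hypothesis does not obviously supply.
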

\begin{proof}
    (\ref{eq:eigenexpansion}) directly follows from contour deformation and residue theorem. 
\end{proof}
\section{Linear stability of a coupled oscillator system with time delay}\label{sec:4}
In this section, the linear stability analysis of incoherent state of the infinite dimensional Kuramoto model with a single time lag is implemented based on the generalized spectral theory on a Gelfand triple. 

As shown in the Introduction, the governing differential equation reads
\begin{equation}\label{eq:km3}
    \begin{cases}
    \partial _t \rho+\partial_\theta(V[\rho]\rho)=0,\\
     V[\rho]=\omega+k\int_{-\infty}^{+\infty}\int_0^{2\pi}\sin(\theta_\ast-\theta)\rho(t-\tau,\omega_\ast,\theta_\ast)g(\omega_\ast)\di \theta_\ast \di \omega_\ast.\\
    \end{cases}
\end{equation}
The complex order parameter (centroid of phase oscillators) of each solution is defined by
$$
r(t)=\int_{-\infty}^{+\infty}\int_0^{2\pi}e^{ i\theta}\rho(t,\omega,\theta)g(\omega)\di \theta \di \omega,
$$
and $r(t)=0$ corresponds to the incoherent (completely de-synchronous) state. 

We will show that $r(t)$ for the linearized equation decays exponentially to zero in the weak coupling regime and nearly identical frequency setting. 
\subsection{Linearization at the incoherent state}
The Fourier series expansion of $\rho(t,\omega,\theta)$ reads
$$
\begin{aligned}
    \rho(t,\omega,\theta)&=\frac{1}{2\pi}\sum_{j=-\infty}^{+\infty}F_j(t,\omega)e^{i\theta}
    =\frac{1}{2\pi}\sum_{j=-\infty}^{+\infty}[\int_0^{2\pi}\rho(t,\omega,\theta_\ast)e^{- ij\theta_\ast}\di \theta_\ast]e^{i\theta}.
\end{aligned}
$$
By substituting the expansion into (\ref{eq:km3}), we obtain that for each $j\in \mathbb{Z}$, the Fourier coefficient $F_{-j}(t,\omega)$ satisfies
\begin{equation}
\begin{aligned}
    \partial_tF_{-j}=ij\omega F_{-j}(t,\omega)+\frac{jk}{2}r(t-\tau)F_{-j+1}(t,\omega)-\frac{jk}{2}\overline{r(t-\tau)}F_{-j-1}(t,\omega).
\end{aligned}
\end{equation}
In particular, 
$$
\partial_t F_{-1}(t,\omega)= i\omega F_{-1}(t,\omega)+\frac{k}{2}r(t-\tau)-\frac{k}{2}\overline{r(t-\tau)}F_{-2}(t,\omega).
$$
For the incoherent state, we have $Z_j(t,\omega)\equiv0$ for $j\not=0$. It is worth noticing that the delayed mean-field $r(t-\tau)$ satisfies
$$
r(t-\tau)=\int_{-\infty}^{+\infty}F_{-1}(t-\tau,\omega)g(\omega)\di \omega.
$$
Hence, the linear stability of the incoherent state is equivalent to the stability of zero solution of the following linear evolution equation with a single time delay:
        $$
        \partial_t u(t,\omega)= i\omega  u(t,\omega)+\frac{k}{2}\int_{-\infty}^{+\infty}u(t-\tau,\omega)g(\omega)\di \omega.
        $$
Equivalently, we are now in a position to analyze the stability of the following equation
\begin{equation}\label{eq:km4}
    \frac{\di}{\di t}u(t)= i\mathcal{M}u(t)+\frac{k}{2}\mathcal{P}u(t-\tau),
\end{equation}
where $u:[-\tau,\infty)\to L^2(\mathbb{R},g(\omega)\di \omega)$. The multiplication operator $\mathcal{\mathcal{M}}:\psi(\omega)\mapsto \omega \psi(\omega)$ is an unbounded self-adjoint operator with absolutely continuous spectral measure. The integral operator $\mathcal{P}:\psi(\omega)\mapsto \int_{-\infty}^{+\infty}\psi(\omega)g(\omega)\di \omega$ is compact on the weighted Hilbert space $L^2(\mathbb{R},g(\omega)\di \omega)$ (henceforth, $\mathcal{H}$).

The coupling strength $k$ serves as a bifurcation parameter. It will be shown that under nearly identical frequency assumption, for any $\tau>0$, there exists some critical coupling strength $k_c(\tau)\in (-\infty,+\infty)$. When $|k|<|k_c(\tau)|$, the incoherent solution is stable and decay states of (\ref{eq:km4}) can be found in a weak sense. 
\subsection{Obstacles caused by continuous spectrum}
Let us investigate eigenvalues associated with the linear evolution equation (\ref{eq:km4}) at first. 
\begin{lemma}
    The eigenvalues satisfy the following integral equation
    \begin{equation}\label{eq:integral}
        e^{-\lambda\tau}\frac{k}{2}\int_{-\infty}^{+\infty}\frac{g(\omega)}{\lambda- i\omega}\di \omega=1.
    \end{equation}
\end{lemma}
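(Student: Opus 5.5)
The plan is to work directly with the characteristic equation (\ref{eq:delta}) specialised to (\ref{eq:km4}), where $H=\mathcal{M}$ and $K=\frac{k}{2}\mathcal{P}$. By Lemma \ref{lemma:delay_generalized_retarded}, $\lambda$ is an eigenvalue of the generator $\mathcal{A}$ precisely when there is a nonzero $u\in\mathrm{Dom}(\mathcal{M})$ with
\begin{equation*}
(\lambda- i\omega)u(\omega)=e^{-\lambda\tau}\frac{k}{2}\int_{-\infty}^{+\infty}u(\omega_\ast)g(\omega_\ast)\di\omega_\ast
\end{equation*}
for almost every $\omega$. The proof then reduces to a rank-one computation, which I will carry out in three steps.

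\emph{Step one: the constant is nonzero.} Write $c:=\int u\,g\,\di\omega=(u,\mathbb{I})_{\mathcal{H}}$. Suppose $c=0$. Then $(\lambda-i\omega)u(\omega)=0$ almost everywhere. The multiplication operator $\mathcal{M}$ has absolutely continuous spectral measure (see \textbf{(H3)}), so the set $\{\omega : \lambda=i\omega\}$ has measure zero. It follows that $u=0$ in $\mathcal{H}$, which is a contradiction, so $c\neq0$.

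\emph{Step two: solve for the eigenfunction.} Since $\lambda-i\omega\neq0$ for almost every $\omega$, we can divide:
\begin{equation*}
u(\omega)=\frac{e^{-\lambda\tau}\frac{k}{2}c}{\lambda-i\omega}.
\end{equation*}
Integrating this identity against $g$ and using the definition of $c$ gives
\begin{equation*}
c=c\,e^{-\lambda\tau}\frac{k}{2}\int\frac{g(\omega)}{\lambda-i\omega}\di\omega .
\end{equation*}
Cancelling $c\neq0$ yields (\ref{eq:integral}).

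\emph{Step three: the converse.} If $\lambda$ with $\Re(\lambda)\neq0$ satisfies (\ref{eq:integral}), take $u(\omega)=1/(\lambda-i\omega)$. This function is bounded and decays like $1/|\omega|$, so $u\in\mathrm{Dom}(\mathcal{M})\subset\mathcal{H}$ because $g$ is Gaussian. Reading Step two backwards shows that $u$ solves the characteristic equation, so $\lambda$ is an eigenvalue.

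The main obstacle is the justification in Step two when $\Re(\lambda)=0$. For $\Re(\lambda)\neq0$ the integral $\int g/(\lambda-i\omega)\,\di\omega$ converges absolutely and $u\in\mathcal{H}$. For $\lambda\in i\mathbb{R}$, however, the function $1/(\lambda-i\omega)$ is not square-integrable against $g$ near $\omega=\lambda/i$, because $g>0$ there. Hence no eigenfunction can exist on the imaginary axis, and such points belong to the continuous rather than the point spectrum. I would state this explicitly: the lemma is read for $\Re(\lambda)\neq0$, with the integral understood in the usual sense, and it shows that all eigenvalues lie off $i\mathbb{R}$.
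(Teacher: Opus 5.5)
Your proposal is correct and follows the same route as the paper: solve the characteristic equation for the eigenfunction as $e^{-\lambda\tau}\frac{k}{2}(u,\mathbb{I})/(\lambda-i\omega)$ and pair with $\mathbb{I}$. Your Step one and your observation that $1/(\lambda-i\omega)\notin L^2(\mathbb{R},g(\omega)\di\omega)$ for $\lambda\in i\mathbb{R}$ make explicit what the paper uses silently when it cancels $(\psi,\mathbb{I})$. In particular, the lemma needs no restriction to $\Re(\lambda)\neq0$, since it is simply vacuous on the imaginary axis. Step three is a converse that the statement does not require.
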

\begin{proof}
    Denote by $\mathbb{I}\equiv 1$ a constant function in $\mathcal{H}=L^2(\mathbb{R},g(\omega)\di \omega)$. If $\lambda\in \mathbb{C}$ is an eigenvalue associated with (\ref{eq:km4}), there exists some $\psi\in \mathcal{H}$ which solves the following characteristic equation 
    $$
        \Delta(\lambda)\psi=(\lambda- i\omega)\psi(\omega)-e^{-\lambda\tau}\frac{k}{2}\int_{-\infty}^{+\infty}\psi(\omega)g(\omega)\di \omega=0.
    $$
    It implies that
    $$
    \psi=e^{-\lambda\tau}\frac{k}{2(\lambda- i\omega)}(\psi,\mathbb{I})\mathbb{I}.
    $$
    Then, we have
    $$
    1=e^{-\lambda\tau}\frac{k}{2}(\frac{1}{\lambda- i\omega},\mathbb{I})=e^{-\lambda\tau}\frac{k}{2}\int_{-\infty}^{+\infty}\frac{g(\omega)}{\lambda- i\omega}\di \omega.
    $$
\end{proof}
Let us investigate a limiting case where oscillators on $\mathbb{T}^1$ have identical natural frequency $\omega_0$ \cite{yeung}. In other words, we assume $g(\omega)=\delta(\omega-\omega_0)$. Then, (\ref{eq:integral}) is reduced to the following transcendental equation
\begin{equation}\label{eq:lambert}
    \lambda- i\omega_0-\frac{k}{2}e^{-\lambda\tau}=0.
\end{equation}
\begin{remark}
    Let $g(\omega)$ be a Gaussian distribution given by
    $$g(\omega)=\delta_h(\omega-\omega_0):=\sqrt{\frac{h}{\pi}}e^{-h(\omega-\omega_0)^2}.$$
    By using changing of variable, we have
    $$
    \sqrt{\frac{h}{\pi}}\int_{-\infty}^{+\infty}\frac{e^{-h(\omega-\omega_0)^2}}{\lambda- i\omega}\di \omega =\frac{1}{\sqrt{\pi}}\int_{-\infty}^{+\infty}\frac{e^{-\omega^2}}{\lambda- i(\frac{\omega}{\sqrt{h}}+\omega_0)}\di \omega.
    $$
    Since $|\lambda- i(\frac{\omega}{\sqrt{h}}+\omega_0)|\geq|\Re(\lambda)|$, it follows from the dominated convergence theorem that the above integral converges to $\frac{1}{\lambda- i\omega_0}$ as $h\to\infty$.
\end{remark}
    \begin{remark}
        Moreover, we can obtain a more accurate estimate of the integral in (\ref{eq:integral}). Indeed, we have
    $$
    \frac{1}{\lambda- i(\frac{\omega}{\sqrt{h}}+\omega_0)}=\frac{1}{\lambda- i\omega_0}\cdot\frac{1}{1-\frac{ i\omega}{\sqrt{h}(\lambda- i\omega_0)}}.
    $$
    When $h>>1$, the above fraction can be expanded as
    $$
    \frac{1}{\lambda- i(\frac{\omega}{\sqrt{h}}+\omega_0)}=\frac{1}{\lambda- i\omega_0}\sum_{n=0}^\infty(\frac{ i\omega}{\sqrt{h}(\lambda- i\omega_0)})^n.
    $$
 Hence, it follows that
    $$
    \begin{aligned}
        \int_{-\infty}^{+\infty}\frac{\delta_h(\omega-\omega_0)}{\lambda- i\omega}\di \omega&=\frac{1}{\sqrt{\pi}}\int_{-\infty}^{+\infty}\frac{e^{-\omega^2}}{\lambda- i(\frac{\omega}{\sqrt{h}}+\omega_0)}\di \omega\\
        &=\frac{1}{\sqrt{\pi}(\lambda-i\omega_0)}\int_{-\infty}^{+\infty}e^{-\omega^2}\sum_{n=0}^\infty(\frac{ i\omega}{\sqrt{h}(\lambda- i\omega_0)})^n\di \omega\\
        &=\frac{1}{\lambda-i\omega_0}(1+0-\frac{1}{2h(\lambda- i\omega_0)^2}+0+...),\\
    \end{aligned}
    $$
    which yields 
    \begin{equation}\label{eq:estimate}
        \int_{-\infty}^{+\infty}\frac{\delta_h(\omega-\omega_0)}{\lambda- i\omega}\di \omega=\frac{1}{\lambda-i\omega_0}(1-\frac{1}{2h(\lambda- i\omega_0)^2}+\mathcal{O}(\frac{1}{h^2})).
    \end{equation}
    \end{remark} 
It is known that the transcendental equation of type
\begin{equation}\label{eq:te}
\lambda+\alpha-\beta e^{-\lambda\tau}=0
\end{equation}
can be studied through the Lambert $W$ function \cite{CGHJK}.
\begin{definition}
    The Lambert W function is defined to be the multi-valued inverse of the function $z\mapsto z e^{z}$. Denote by $W(\zeta)$, the collection of all $z\in \mathbb{C}$ satisfying $z e^{z}=\zeta$.
\end{definition}
By studying the "graph-like" expressions of the complex branches of the Lambert $W$ function, we have the following necessary and sufficient conditions ensuring all roots of (\ref{eq:te}) have negative real parts.
\begin{theorem}[Theorem 1.2 of \cite{nishi}]\label{thm:nishi}
    Let $\alpha\in \mathbb{C}$, $\beta\in \mathbb{C}\backslash\{0\}$, and $\tau>0$. Then all roots of (\ref{eq:te}) have negative real parts if and only if either of the following two conditions holds:\\
    \noindent $\mathbf{(a)}$ $\Re(\alpha)>|\beta|;$\\
    \noindent $\mathbf{(b)}$ $-|\beta|<\Re(\alpha)\leq |\beta|$ and 
    \begin{equation}\label{eq:inequi}
        \arccos\{\cos[(\Im(\alpha)\tau+\mathrm{Arg}(\beta))]\}>\arccos(\frac{\Re(\alpha)}{|\beta|})+\tau\sqrt{|\beta|^2-\Re(\alpha)^2}.
    \end{equation}
\end{theorem}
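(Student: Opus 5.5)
The plan is to reduce to a normalized form, then use a continuity (D-subdivision) argument in the parameter $\Re(\alpha)$: roots of (\ref{eq:te}) move continuously with the coefficients, and no root comes from infinity on the right. So the number of roots in $\{\Re(\lambda)\ge 0\}$ can only change when a root lies on the imaginary axis.

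\textbf{Normalization.} Substituting $\lambda=\mu-i\Im(\alpha)$ turns (\ref{eq:te}) into
$$\mu+a-b e^{i\phi}e^{-\mu\tau}=0,\qquad a=\Re(\alpha),\quad b=|\beta|,\quad \phi=\Im(\alpha)\tau+\mathrm{Arg}(\beta).$$
This shift preserves real parts. The quantity $\arccos\{\cos\phi\}\in[0,\pi]$ is exactly the distance of $\phi$ to $2\pi\mathbb{Z}$, which is why it appears in (\ref{eq:inequi}).

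\textbf{Case (a).} Suppose $a>b$ and $\Re(\mu)\ge0$. Then $|\mu+a|\ge \Re(\mu)+a>b\ge |b e^{i\phi}e^{-\mu\tau}|$, so there is no root in the closed right half-plane. This also gives a stable starting configuration for the continuation argument: $a$ large.

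\textbf{Locating imaginary-axis roots.} Write $\mu=iy$. Taking moduli gives $a^2+y^2=b^2$, so crossings are only possible when $|a|\le b$. Put $a=b\cos\psi$ with $\psi=\arccos(a/b)\in[0,\pi]$; then $y=\pm b\sin\psi$. Matching arguments in $a+iy=be^{i(\phi-y\tau)}$ gives
$$\pm\bigl(\psi+\tau b\sin\psi\bigr)\equiv \phi \pmod{2\pi}.$$
Hence, with $g(\psi):=\psi+\tau b\sin\psi$ (where $b=|\beta|$ and $b\sin\psi=\sqrt{|\beta|^2-\Re(\alpha)^2}$), an imaginary root exists at parameter value $a=b\cos\psi$ if and only if $g(\psi)\equiv\pm\phi$ modulo $2\pi$.

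\textbf{Continuation in $a$.} Next decrease $a$ from $b$ (where $\psi=0$ and $g=0$) down to $-b$ (where $\psi=\pi$), with $\phi$ and $b$ fixed. The first crossing happens at the smallest $\psi$ with $g(\psi)=\arccos\{\cos\phi\}$. Here $g$ is continuous with $g(0)=0$, and the case $\phi\in2\pi\mathbb{Z}$ is handled separately (then $a=b$ is itself a crossing).

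I will compute the crossing direction by implicit differentiation of $F(\mu,a)=\mu+a-be^{i\phi}e^{-\mu\tau}$:
$$\frac{d\mu}{da}=-\frac{1}{1+\tau b e^{i\phi}e^{-\mu\tau}}=-\frac{1}{1+\tau(\mu+a)}.$$
At $\mu=iy$ this gives $\Re\frac{d\mu}{da}=-\frac{1+\tau a}{|1+\tau(a+iy)|^2}$.

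\textbf{Main obstacle.} This sign is not obviously fixed when $1+\tau a<0$. So the key step is to show that once a root crosses to the right it never returns, that is, $\{\Re(\mu)\ge0\}$ never becomes root-free again. Equivalently, stability at parameter $a$ holds if and only if $g(\psi')<\arccos\{\cos\phi\}$ for all $\psi'\in[0,\psi]$.

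To reduce this to the single inequality (\ref{eq:inequi}), I would first try to show directly that along a branch of roots the quantity $g$ is increasing at every crossing; note $g'(\psi)=1+\tau b\cos\psi=1+\tau a$. Where $1+\tau a<0$, I would instead argue that any return crossing would require a root to sit at a point with $g$ already beyond the threshold. If that fails, I would switch to the Lambert-$W$ parametrization
$$z=(\mu+a)\tau,\qquad z e^{z}=b\tau e^{a\tau+i\phi},$$
and use the known monotone graph-like description of the branches $W_k$ to count the right-half-plane roots directly.

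\textbf{Case $a\le -b$.} At $a=-b$ there is a root with $\Re(\mu)\ge0$ once the continuation has crossed. For $a<-b$ no further crossings occur, so instability persists. Together with case (a) and the continuation argument, this yields the necessity direction of both conditions.
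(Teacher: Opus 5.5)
The paper gives no proof of this statement. It is quoted from Nishiguchi, whose argument, as the paper indicates, uses graph-like expressions of the complex branches of the Lambert $W$ function. Your continuation (D-subdivision) argument in $a=\Re(\alpha)$ is a legitimately different route. The following parts are all correct: the normalization, case (a), the description of imaginary-axis roots via $g(\psi)\equiv\pm\phi \pmod{2\pi}$, and the formula for $\Re\frac{d\mu}{da}$.

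However, there is a genuine gap exactly at the step you flag as the main obstacle. You never prove that the closed right half-plane stays occupied once a root has entered it. Nor do you prove that the single inequality (\ref{eq:inequi}) is equivalent to $g(\psi')<\arccos\{\cos\phi\}$ for all $\psi'\in[0,\psi]$. Sufficiency needs this equivalence, because a crossing at some earlier $\psi'$ must be ruled out. Your first proposed fix is false: where $1+\tau a<0$ one has $g'(\psi)<0$ and $\Re\frac{d\mu}{da}>0$ at every root on $i\mathbb{R}$, so roots can genuinely leave the right half-plane as $a$ decreases there. The second suggestion is not yet an argument, and the third only defers to Nishiguchi's method. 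Your closing paragraph on $a\le -b$ presupposes the same unproved claim. Your crossing formula does settle necessity when $1+\tau a>0$, since then every $a'\in[a,b]$ has $1+\tau a'>0$ and all crossings go left-to-right as $a$ decreases. It settles nothing beyond that regime.

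The missing observation is elementary. On $[0,\pi]$ you have $g''(\psi)=-\tau b\sin\psi\le 0$ and $g(\pi)=\pi$. So wherever $g'(\psi)=1+\tau a\le 0$, you get $g(\psi)\ge\pi\ge\phi_0:=\arccos\{\cos\phi\}$. This has two consequences.

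\textbf{Sufficiency.} Inequality (\ref{eq:inequi}) forces $1+\tau a>0$. Then $g$ is increasing on $[0,\psi]$, so $g(\psi')\le g(\psi)<\phi_0$ for all $\psi'\le\psi$. Since $\phi_0$ is the least nonnegative element of $\pm\phi+2\pi\mathbb{Z}$, no root meets $i\mathbb{R}$ on the way down from $a=b$.

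\textbf{Necessity when $1+\tau a\le 0$.} In this regime (b) fails automatically, so you must show a root with $\Re\mu\ge0$ exists, and you should argue from the other end. For $a'<-b$, apply Rouch\'e on a large right half-disk; on its boundary $|\mu+a'|>b\ge|be^{i\phi}e^{-\mu\tau}|$. This shows there is exactly one root with $\Re\mu>0$. As $a'$ increases through $[-b,a)$, you have $1+\tau a'<0$, hence $\Re\frac{d\mu}{da}>0$ at every root on $i\mathbb{R}$, so roots can only enter the right half-plane. Passing to the limit $a'\to a$ (Hurwitz) then gives a root with $\Re\mu\ge 0$ at $a$ itself. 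The same Rouch\'e step, with a limit at $a=-b$, disposes of $a\le -b$ directly.

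With these additions your argument becomes a complete proof, and a more elementary one than the Lambert-$W$ branch analysis the paper relies on.
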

It is clear that the set of roots of (\ref{eq:lambert}) can be rewritten as 
$$\frac{1}{\tau}W(\frac{k}{2}\tau e^{- i\omega_0\tau})+ i\omega_0.$$
Applying Theorem \ref{thm:nishi} with $\alpha=- i\omega_0$ and $\beta=\frac{k}{2}$, we have the following.
\begin{corollary}\label{coro:critical_value}
    For each $\tau>0$ and $\omega_0\not=0$, there exists critical coupling strength $k_c(\tau)$ given by
\begin{equation}
    k_c(\tau)=\frac{2}{\tau}\arccos(\cos(\omega_0\tau))-\frac{\pi}{\tau},
\end{equation}
such that for $|k|<|k_c(\tau)|$, all roots of (\ref{eq:lambert}) have negative real parts, and vice versa.
\end{corollary}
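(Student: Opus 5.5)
The plan is to apply Theorem~\ref{thm:nishi} directly. Equation (\ref{eq:lambert}) is of the form (\ref{eq:te}) with $\alpha=-i\omega_0$ and $\beta=\frac{k}{2}$. The case $k=0$ is excluded from Theorem~\ref{thm:nishi}, and it is trivial anyway: the only root is $\lambda=i\omega_0$, which has zero real part. So assume $k\neq0$. Then $\Re(\alpha)=0$ and $|\beta|=|k|/2>0$.

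\textbf{Step 1: reduce to condition (b).} Condition $\mathbf{(a)}$, namely $0>|k|/2$, never holds. The first half of condition $\mathbf{(b)}$, namely $-|\beta|<0\le|\beta|$, always holds. Hence all roots have negative real part if and only if (\ref{eq:inequi}) holds. Since $\arccos(0)=\frac{\pi}{2}$ and $\sqrt{|\beta|^2-0}=|k|/2$, inequality (\ref{eq:inequi}) reads
\begin{equation*}
\arccos\{\cos[-\omega_0\tau+\mathrm{Arg}(k/2)]\}>\frac{\pi}{2}+\frac{\tau|k|}{2}.
\end{equation*}

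\textbf{Step 2: split by the sign of $k$.} Set $a:=\arccos(\cos(\omega_0\tau))\in[0,\pi]$; note that $\cos$ is even, so the sign of $\omega_0\tau$ does not matter.
\begin{itemize}
\item[] If $k>0$, then $\mathrm{Arg}(\beta)=0$. The inequality becomes $a>\frac{\pi}{2}+\frac{\tau k}{2}$, that is, $k<\frac{2}{\tau}a-\frac{\pi}{\tau}=k_c(\tau)$.
\item[] If $k<0$, then $\mathrm{Arg}(\beta)=\pi$. Using $\cos(\pi-\omega_0\tau)=-\cos(\omega_0\tau)$ and $\arccos(-x)=\pi-\arccos x$, the left-hand side equals $\pi-a$. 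The inequality becomes $|k|<\frac{\pi}{\tau}-\frac{2}{\tau}a=-k_c(\tau)$.
\end{itemize}

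\textbf{Step 3: assemble, and the main obstacle.} Combining the two cases, all roots lie in $\{\Re\lambda<0\}$ if and only if $k$ has the same sign as $k_c(\tau)$ and $|k|<|k_c(\tau)|$. Equivalently, $k$ lies strictly between $0$ and $k_c(\tau)$. The converse direction ("vice versa") is immediate, because Theorem~\ref{thm:nishi} is an equivalence.

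The main obstacle is reconciling this computation with the symmetric formulation $|k|<|k_c(\tau)|$ in the statement. Step 2 indicates that when $k_c(\tau)>0$, a negative $k$ with $|k|<k_c(\tau)$ violates (\ref{eq:inequi}), and symmetrically when $k_c(\tau)<0$. I would therefore state the corollary with the refined region $0<k/k_c(\tau)<1$. Alternatively, I would read $|k|<|k_c(\tau)|$ as implicitly including $\mathrm{sign}(k)=\mathrm{sign}(k_c(\tau))$.

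The degenerate case $a=\frac{\pi}{2}$, where $k_c(\tau)=0$ and the stability region is empty, should be noted separately. It is consistent with the formula.
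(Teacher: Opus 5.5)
Your argument is correct and is the paper's own: apply Theorem~\ref{thm:nishi} with $\alpha=-i\omega_0$, $\beta=k/2$, reduce to condition (b), and treat $k>0$ and $k<0$ separately using $\arccos(-x)=\pi-\arccos x$. Your caveat is also justified. The paper's proof itself produces thresholds $k_c^{+}=\max\{0,k_c(\tau)\}$ and $k_c^{-}=\min\{0,k_c(\tau)\}$, so the stability region is exactly $0<k/k_c(\tau)<1$. The literal symmetric condition $|k|<|k_c(\tau)|$ therefore fails. For example, take $\omega_0\tau=\pi$ and $k=-\pi/(2\tau)$; then $\mu=\lambda-i\omega_0$ satisfies $\mu=\frac{\pi}{4\tau}e^{-\mu\tau}$, which has a positive real root. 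Likewise $k=0$ gives the root $i\omega_0$, whose real part is zero.
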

\begin{proof}
    When $\alpha=- i\omega_0$ and $\beta=\frac{k}{2}$, the inequality (\ref{eq:inequi}) can be rewritten as 
    $$
    \arccos[\cos(-\omega_0\tau+\mathrm{Arg}(\frac{K}{2}))]>\frac{\pi}{2}+\frac{\tau|k|}{2}.
    $$
    Hence, the positive critical value $k_c^+(\tau)$ is given by $\max\{0,\frac{2}{\tau}\arccos(\cos(\omega_0\tau))-\frac{\pi}{\tau}\}$ and the negative critical value $k_c^-(\tau)$ is given by
    $\min\{0,-\frac{2}{\tau}\arccos(-\cos(\omega_0\tau))+\frac{\pi}{\tau}\}$. The function $x\mapsto \arccos(\cos(x))$ is a $2\pi$-periodic and $\arccos(\cos(x))=|x|$ when $-\pi\leq x\leq \pi$. It is easy to verify that 
    $$
    \arccos(-\cos(x))=\pi-\arccos(\cos(x)).
    $$
    Thus, we have 
    $$
    \frac{2}{\tau}\arccos(\cos(\omega_0\tau))-\frac{\pi}{\tau}=-\frac{2}{\tau}\arccos(-\cos(\omega_0\tau))+\frac{\pi}{\tau},
    $$
    which completes the proof.
\end{proof}
The relation between the roots of (\ref{eq:integral}) with $g(\omega)=\delta_h(\omega-\omega_0)$ and (\ref{eq:lambert}) is stated as follows. We can verify that the solution set of the integral equation converges to the solution set of the transcendental equation in Hausdorff distance. Moreover, a one-to-one correspondence of roots of the two equations exists for sufficiently large $h$.
\begin{proposition}\label{prop:correspondance}
    Assume that roots of (\ref{eq:lambert}) have nonzero real parts. For any $\varepsilon>0 $, there exists $h^\ast$ such that if $h>h^\ast$, roots of (\ref{eq:integral}) are in $\varepsilon$-neighborhood of roots of (\ref{eq:lambert}). Conversely, if $\lambda$ solves (\ref{eq:lambert}), we can find some $\varepsilon>0 $ and $h^\ast$ such that for any $h>h^\ast$, there exists exactly one $\lambda^\prime$ with $|\lambda^\prime-\lambda|<\varepsilon$, which satisfies (\ref{eq:integral}).
\end{proposition}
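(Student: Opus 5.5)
We will prove Proposition \ref{prop:correspondance} by a Rouché-type argument applied to two holomorphic functions on the half-plane $\Re(\lambda)\neq 0$. Set
\[
F_0(\lambda):=\lambda-i\omega_0-\tfrac{k}{2}e^{-\lambda\tau},\qquad
F_h(\lambda):=(\lambda-i\omega_0)\Bigl(1-e^{-\lambda\tau}\tfrac{k}{2}\textstyle\int_{-\infty}^{+\infty}\frac{\delta_h(\omega-\omega_0)}{\lambda-i\omega}\di\omega\Bigr).
\]
On each open half-plane, the roots of (\ref{eq:integral}) are exactly the zeros of $F_h$ away from $\lambda=i\omega_0$, and $F_0$ is the transcendental function in (\ref{eq:lambert}). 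The difference is
\[
F_h(\lambda)-F_0(\lambda)=\tfrac{k}{2}e^{-\lambda\tau}\bigl(1-(\lambda-i\omega_0)G_h(\lambda)\bigr),
\]
where $G_h$ denotes the Gaussian Cauchy integral. By the change of variables in the first Remark and the expansion (\ref{eq:estimate}), this difference should be $\mathcal{O}(1/h)$ uniformly on compact sets bounded away from the imaginary axis.

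\textbf{Step 1: uniform estimate.} For $|\Re\lambda|\ge\delta>0$, we have
\[
1-(\lambda-i\omega_0)G_h(\lambda)=\frac{1}{\sqrt\pi}\int e^{-\omega^2}\,\frac{-i\omega/\sqrt h}{\lambda-i\omega_0-i\omega/\sqrt h}\di\omega .
\]
The integrand is bounded by $|\omega|e^{-\omega^2}/(\sqrt h\,\delta)$, so the whole expression is $\mathcal{O}(h^{-1/2}\delta^{-1})$, uniformly in such $\lambda$. This crude bound suffices, and it avoids justifying the termwise expansion behind (\ref{eq:estimate}). The factor $e^{-\lambda\tau}$ is bounded on $\Re\lambda\ge-R$, so the estimate holds uniformly on $\{\delta\le|\Re\lambda|,\ \Re\lambda\ge -R\}$.

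\textbf{Step 2: the converse direction (existence and uniqueness near a root of $F_0$).} Let $\lambda_0$ solve (\ref{eq:lambert}). By hypothesis $\Re\lambda_0\neq0$. Roots of $F_0$ are simple except in the degenerate case $F_0'(\lambda_0)=1+\tfrac{k}{2}\tau e^{-\lambda_0\tau}=0$, which would force $\lambda_0-i\omega_0=-1/\tau$; I would either check that this is excluded or treat it as a multiplicity statement. Choose $\varepsilon<|\Re\lambda_0|/2$ so small that $\lambda_0$ is the only zero of $F_0$ in $\overline{B(\lambda_0,\varepsilon)}$. Then $m:=\min_{|\lambda-\lambda_0|=\varepsilon}|F_0|>0$. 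Step 1 with $\delta=|\Re\lambda_0|/2$ gives $|F_h-F_0|<m$ on the circle for $h>h^\ast$. Rouché's theorem then yields exactly one zero $\lambda'$ of $F_h$ in the disc. Also $\lambda'\neq i\omega_0$, since the disc misses the imaginary axis, so $\lambda'$ solves (\ref{eq:integral}).

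\textbf{Step 3: the direct direction (all roots of (\ref{eq:integral}) lie near roots of (\ref{eq:lambert})).} This is the main obstacle, because it needs control on non-compact regions. First I will show that roots of (\ref{eq:integral}) lie in a bounded set of the relevant half-plane. For $\Re\lambda\ge0$, I use $|G_h(\lambda)|\le 1/|\Re\lambda|$ together with $|G_h(\lambda)|\lesssim 1/|\lambda-i\omega_0|$ for large $|\lambda|$, which follows by splitting the Gaussian integral. This forces $|\lambda-i\omega_0|\lesssim |k|/2$ for any root. For $\Re\lambda<0$ the equation is interpreted through the continuation; the root set of $F_0$ there is infinite and unbounded, with real parts tending to $-\infty$. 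I would restrict to a strip $\Re\lambda\ge -R$, which is what is needed for the decay statement, and there $|\Im\lambda|$ is a priori bounded by the same estimate.

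Next, I need to exclude roots of $F_h$ accumulating near the imaginary axis. For this I take $\delta$ smaller than half the distance from the root set of $F_0$ in the strip to $i\mathbb{R}$, which is positive by hypothesis and local finiteness. On the set $K$, the compact region $\{\delta\le|\Re\lambda|,\ -R\le\Re\lambda\}$ within the a priori bound with $\varepsilon$-discs around zeros of $F_0$ removed, we have $|F_0|\ge c>0$. Step 1 then gives $|F_h|\ge c/2$ on $K$ for large $h$. The strip $|\Re\lambda|<\delta$ is handled in $\Re\lambda>0$ by the explicit lower bound on $|1-e^{-\lambda\tau}\frac{k}{2}G_h|$ obtained from the first Remark's dominated-convergence limit. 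This last near-axis piece is the delicate point, and I would treat it last, possibly by shrinking $\delta$ with $h$.
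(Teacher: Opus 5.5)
Your Steps 1 and 2 are sound. The bound $|1-(\lambda-i\omega_0)G_h(\lambda)|\le(\sqrt{\pi h}\,|\Re\lambda|)^{-1}$ is correct, and sharper than the paper's estimate of $|I(\lambda,h)-G(\lambda)|$. Rouché on a small disc then gives the converse half.

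The gap is Step 3, i.e.\ the first half of the statement, which you have not proved.

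\textbf{The strip $0<|\Re\lambda|<\delta$ is left open.} The tool you name cannot close it. The dominated-convergence argument of the first Remark only uses $|\lambda-i(\omega/\sqrt h+\omega_0)|\ge|\Re\lambda|$ and yields pointwise convergence. Your Step 1 bound is $\mathcal{O}(h^{-1/2}\delta^{-1})$, so letting $\delta$ shrink with $h$ (necessarily with $\delta\gg h^{-1/2}$) still leaves an uncontrolled strip. Near $i\omega_0$ the two equations are genuinely far apart. With $b=\sqrt h(\lambda-i\omega_0)$ one has $G_h(\lambda)=\sqrt h\,Z(b)$, where $Z(b)=\int_0^\infty e^{-bu-u^2/4}\di u$ for $\Re b>0$ and $Z(b)=-Z(-b)$ for $\Re b<0$. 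Hence $G_h(\lambda)\to\sqrt{\pi h}$ as $\lambda\to i\omega_0$ from the right. Two ingredients are missing.

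(a) You need an estimate uniform up to $i\mathbb{R}$ away from $i\omega_0$. For instance, use $G_h(\lambda)=\int_0^\infty e^{-(\lambda-i\omega_0)s-s^2/(4h)}\di s$ for $\Re\lambda>0$, and the mirror formula $G_h(\lambda)=-\int_0^\infty e^{(\lambda-i\omega_0)s-s^2/(4h)}\di s$ for $\Re\lambda<0$. Two integrations by parts then give $|1-(\lambda-i\omega_0)G_h(\lambda)|\le Ch^{-1/2}|\lambda-i\omega_0|^{-1}$ on both sides of the axis. This also justifies your a priori bound $|G_h|\lesssim|\lambda-i\omega_0|^{-1}$, which naive splitting of the Gaussian integral does not give uniformly as $\Re\lambda\to0$.

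(b) You need a rescaled argument on $0<|\lambda-i\omega_0|<r$. Here $\Re Z(b)$ has the sign of $\Re b$, $Z$ extends continuously and zero-free to $i\mathbb{R}$ from either side, and $Z(b)\sim1/b$. Hence $|Z(b)|\ge c/(1+|b|)$, and so $|\frac{k}{2}e^{-\lambda\tau}G_h(\lambda)|\ge\frac{|k|}{2}e^{-r\tau}\,c/(h^{-1/2}+r)>1$ there for small $r$ and large $h$. This rules out spurious roots near $i\omega_0$.

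The paper's own contradiction argument (a bounded sequence of roots has a limit $\lambda^\ast$ with $G(\lambda^\ast)=1$) tacitly assumes uniform convergence up to $i\mathbb{R}$ as well. So this is exactly where a real argument is needed.

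\textbf{Truncating to $\Re\lambda\ge-R$ proves less than claimed.} The statement concerns all roots of (\ref{eq:integral}), and those of (\ref{eq:lambert}) form an infinite sequence $\lambda_p$ with $\Re\lambda_p\to-\infty$. Your Step 1 already handles this once you normalize by $\frac{k}{2}e^{-\lambda\tau}$. On $\Re\lambda\le-\delta$ you have $|F_h-F_0|\le|\frac{k}{2}e^{-\lambda\tau}|(\sqrt{\pi h}\,\delta)^{-1}$, while $|F_0|=|\frac{k}{2}e^{-\lambda\tau}|\,|1-\frac{2}{k}(\lambda-i\omega_0)e^{\lambda\tau}|$. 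The last factor is bounded below outside the $\varepsilon$-discs about the $\lambda_p$, uniformly in $p$, because near $\lambda_p$ it equals $1-e^{\tau(\lambda-\lambda_p)}(1+\mathcal{O}(|\lambda-\lambda_p|/|\lambda_p|))$.

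\textbf{The left half-plane equation.} For $\Re\lambda<0$, equation (\ref{eq:integral}) must be read with the ordinary Cauchy integral (it is the classical eigenvalue equation), and that is what Step 1 controls. Reading it through the continuation $A(\lambda)$, as you suggest, adds the term $k\pi e^{-\lambda\tau}\delta_h(\lambda/i)$. Its modulus is of order $\sqrt h\,e^{h[(\Re\lambda)^2-(\Im\lambda-\omega_0)^2]}$, so the comparison with $F_0$ breaks down in the sectors $|\Re\lambda|>|\Im\lambda-\omega_0|$ (cf.\ Theorem \ref{thm:km_resonance_pole}).

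\textbf{Multiplicity.} Your caveat cannot be ``checked away''. For $\omega_0\tau=\pi$ and $k=2/(e\tau)$ one has $\frac{k\tau}{2}e^{-i\omega_0\tau}=-e^{-1}$, and $\lambda_0=i\omega_0-1/\tau$ is a double root compatible with the hypothesis. There ``exactly one'' holds only counting multiplicity; the paper's assertion that all roots are simple fails in the same case.
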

\begin{proof}
    We can obtain that
    $$
    \begin{aligned}
        |\frac{1}{\lambda- i\omega}-\frac{1}{\lambda- i\omega_0}|=\frac{| i(\omega-\omega_0)|}{|(\lambda- i\omega)(\lambda- i\omega_0)|}\leq \frac{|\omega-\omega_0|}{\Re(\lambda)^2}.
    \end{aligned}
    $$
    Now, denote by $I(\lambda,h)$ the left part of (\ref{eq:integral}) with $g(\omega)=\delta_h(\omega-\omega_0)$
    $$
    I(\lambda,h):=e^{-\lambda \tau}\frac{k}{2}\sqrt{\frac{h}{\pi}}\int_{-\infty}^{+\infty}\frac{e^{-h(\omega-\omega_0)^2}}{\lambda- i\omega}\di \omega.
    $$
    Denote by $G(\lambda)$, the point-wise limit of $I(\lambda,h)$
    $$
    G(\lambda):=\frac{k}{2}\frac{e^{-\lambda\tau}}{\lambda- i\omega_0}.
    $$
    It follows that
    $$
    \begin{aligned}
        |I(\lambda,h)-G(\lambda)|&\leq \frac{|k|}{2}\frac{e^{-\Re(\lambda)\tau}}{\Re(\lambda)^2}\sqrt{\frac{h}{\pi}}\int_{-\infty}^{+\infty}e^{-h(\omega-\omega_0)^2}|\omega-\omega_0|\di \omega\\
        &=\frac{|k|}{2}\sqrt{\frac{1}{\pi h}}\frac{e^{-\Re(\lambda)\tau}}{\Re(\lambda)^2}.
    \end{aligned}
    $$
    For any $d>0$, it is clear that $I(\lambda,h)$ converges to $G(\lambda)$ uniformly in $\{\lambda|\Re(\lambda)>d\}$ and any compact subset of $\{\lambda|\Re(\lambda)<-d\}$.
    
    Firstly, let us investigate roots with negative real parts. It follows from (\ref{eq:estimate}) that
    \begin{equation}
        I(\lambda,h)-G(\lambda)=-\frac{G(\lambda)}{2h(\lambda- i\omega_0)^2}+\mathcal{O}(\frac{1}{h^2}).
    \end{equation}
    Let $\lambda_p$ ($p=1,2,3...$) be roots of (\ref{eq:lambert}) and $D_p$ be a $\lambda_p$-neighborhood with a fixed radius $\varepsilon>0$. It is easy to verify that $|G^\prime(\lambda_p)|\to \tau$ as $|\lambda_p|\to \infty$. Thus, when $|\lambda_p|$ is sufficiently large, we have an estimate 
    \begin{equation}
    |I(\lambda,h)-G(\lambda)|\leq \frac{C}{2h|\lambda-i\omega_0|},
    \end{equation}
    for $\lambda\in \partial D_p$ and some constant $C$. Consequently, there exists $h^\ast$ such that if $h>h^\ast$, we have 
    $$
    |I(\lambda,h)-G(\lambda)|<|G(\lambda)-1|
    $$
    for any $\lambda\in D_p$ ($p=1,2,3...$). We claim that every root of $G(\lambda)=1$ is a simple root. It follows from Rouché theorem that $I(\lambda,h)=1$ has exactly one simple root $\lambda_p(h)$ on each $D_p$ and $\lambda_p(h)\to \lambda_p$ as $h\to \infty$. 

    Next, we are in a position to prove that for sufficiently large $h$, $I(\lambda,h)=1$ has no additional root outside the $\lambda_p$-neighborhood. If this statement is wrong, there exist $\{(\lambda_n,h_n)\}_{n\in \mathbb{N}}$ with $h_n\to \infty$ such that $I(\lambda_n,h_n)=1$ and $\lambda_n\not\in\bigcup_p{D_p}$. If $\{\lambda_{n}\}_{n\in \mathbb{N}}$ is bounded, there exists a subsequence $\{\lambda_{n_j}\}_{j\in \mathbb{N}}$ which converges to some $\lambda^\ast$. Since $I(\lambda,h)$ converges to $G(\lambda)$ uniformly on any compact set, we can obtain $G(\lambda^\ast)=1$, which derives a contradiction. Now, we assume that $\{\lambda_{n}\}_{n\in \mathbb{N}}$ is an unbounded set ($|\lambda_n|\to \infty$). Since $|I(\lambda,h)|=|G(\lambda)|+\mathcal{O}(\frac{1}{h})$ and $|G(\lambda)|$ tends to either $\infty$ or $0$ as $|\lambda|\to \infty$. It contradicts with the unboundedness of $\{\lambda_{n}\}_{n\in \mathbb{N}}$.

    Now, we focus on roots on $\{\lambda|\Re(\lambda)>d\}$. It is well known that $G(\lambda)=1$ has at most finite roots on such region, so that we can find some compact simple curve to enclose all of the potential roots. The correspondence relation follows from Rouché theorem and uniform convergence of $I(\lambda,h)$ on $\{\lambda|\Re(\lambda)>d\}$. 
\end{proof}
\begin{corollary}\label{coro:transition_point}
    We assume that $|k|<|k_c(\tau)|$. Then, there exists some $h^\ast$ such that all the eigenvalues associated with (\ref{eq:km4}) with $g(\omega)=\delta_h(\omega-\omega_0)$ have negative real parts for any $h>h^\ast$.
\end{corollary}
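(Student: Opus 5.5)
We combine Corollary \ref{coro:critical_value} with Proposition \ref{prop:correspondance}. The eigenvalues associated with (\ref{eq:km4}) for $g=\delta_h(\cdot-\omega_0)$ are the roots of (\ref{eq:integral}), i.e. of $I(\lambda,h)=1$. The limiting equation is (\ref{eq:lambert}), i.e. $G(\lambda)=1$. Since $|k|<|k_c(\tau)|$ and $\omega_0\neq 0$, Corollary \ref{coro:critical_value} gives that every root $\lambda_p$ of (\ref{eq:lambert}) satisfies $\Re(\lambda_p)<0$. In particular they have nonzero real parts, so Proposition \ref{prop:correspondance} applies.

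\textbf{Step 1: a uniform gap.} We need $\delta:=\inf_p(-\Re\lambda_p)>0$. The roots are $\frac1\tau W_j(\frac{k}{2}\tau e^{-i\omega_0\tau})+i\omega_0$ over the branches $W_j$. For $|j|\to\infty$, the branches satisfy $\Re W_j(\zeta)\approx \log|\zeta|-\log|2\pi j|\to-\infty$. Hence only finitely many roots lie in any strip $\{\Re\lambda>-c\}$, and the infimum is attained and positive. (If $k=0$ there are no roots at all and the claim is trivial.) Fix $\varepsilon<\delta/2$.

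\textbf{Step 2: apply Proposition \ref{prop:correspondance} with this $\varepsilon$.} For $h>h^\ast$, every root of (\ref{eq:integral}) lies in the $\varepsilon$-neighbourhood of some $\lambda_p$. It therefore has real part at most $-\delta+\varepsilon<-\delta/2<0$.

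\textbf{Main obstacle: uniformity in $h$.} We must make sure the proposition's $h^\ast$ is uniform over all (infinitely many) $p$ and excludes spurious roots in $\{\Re\lambda\ge 0\}$. Two regions need separate treatment.

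\emph{Region $\{\Re\lambda>d\}$.} Here the bound $|I(\lambda,h)-G(\lambda)|\le \frac{|k|}{2}\frac{e^{-\Re(\lambda)\tau}}{\sqrt{\pi h}\,\Re(\lambda)^2}$ holds uniformly. Moreover $|G(\lambda)|\le \frac{|k|}{2d}$, which is small for large $|\lambda|$, so there are no roots far out. On the compact remainder $G-1$ has no zeros and is bounded away from $0$, so Rouché shows $I(\cdot,h)=1$ has no roots there for large $h$.

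\emph{Strip $\{|\Re\lambda|\le d\}$.} This is the delicate region, since the crude estimate degenerates as $\Re\lambda\to0$. Here I would use the analytic continuation of $\int\frac{\delta_h(\omega-\omega_0)}{\lambda-i\omega}\,d\omega$ (the $A(\lambda)$-type formula with the residue term $2\pi g(\lambda/i)$). I would then show via the expansion (\ref{eq:estimate}) that it is $G(\lambda)(1+O(1/h))$ uniformly on compact parts of the strip away from $i\omega_0$. Near $i\omega_0$ and for large $|\Im\lambda|$ in the strip, $|G(\lambda)|$ is small: $\frac{|k|e^{d\tau}}{2|\lambda-i\omega_0|}\to0$. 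So the only possible roots are near zeros of $G-1$, and for $d<\delta$ there are none. Combining the regions, all roots of (\ref{eq:integral}) have negative real part for $h>h^\ast$.
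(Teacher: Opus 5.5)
Your Steps 1--2 match how the corollary follows in the paper. It is stated there without proof, right after Proposition~\ref{prop:correspondance}, as an immediate consequence of that proposition and Corollary~\ref{coro:critical_value}. Your Step~1 ($\Re\lambda_p\to-\infty$ along the Lambert branches, hence $\sup_p\Re\lambda_p<0$) supplies a detail the paper leaves implicit but which is needed to choose $\varepsilon$. Taking the proposition at face value, you are done after Step~2.

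One caveat is inherited from the paper. The proof of Corollary~\ref{coro:critical_value} actually gives stability of (\ref{eq:lambert}) for $\min\{0,k_c(\tau)\}<k<\max\{0,k_c(\tau)\}$, not for all $|k|<|k_c(\tau)|$. For example, $\omega_0\tau=\pi$ gives $k_c=\pi/\tau$, yet every $k<0$ yields the root $\lambda=i\omega_0+\mu$ with $\mu=\frac{|k|}{2}e^{-\mu\tau}>0$. So the input to your Step~1 needs that sign condition.

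You are right that the paper's proof of the proposition only controls $\{\Re\lambda>d\}$ and compact parts of $\{\Re\lambda<-d\}$, so the strip along $i\mathbb{R}$ needs an argument. Your treatment of it fails at $\lambda=i\omega_0$, its most delicate point. There $|G(\lambda)|=\frac{|k|e^{-\Re(\lambda)\tau}}{2|\lambda-i\omega_0|}$ blows up rather than being small; your bound tends to $0$ only as $|\lambda-i\omega_0|\to\infty$. The comparison $I\approx G$, which you yourself exclude near $i\omega_0$, genuinely breaks down on the scale $|\lambda-i\omega_0|\sim h^{-1/2}$. So ``the only possible roots are near zeros of $G-1$'' is unsupported exactly there. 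The argument must go the other way and show $|I(\lambda,h)|>1$. For $\Re\lambda>0$ one has
\[
\int_{-\infty}^{+\infty}\frac{\delta_h(\omega-\omega_0)}{\lambda-i\omega}\di\omega=\sqrt{h}\,F\bigl(\sqrt{h}(\lambda-i\omega_0)\bigr),\qquad F(w)=\int_0^\infty e^{-wt-t^2/4}\di t=\sqrt{\pi}\,e^{w^2}\mathrm{erfc}(w),
\]
and on $\Re w\ge 0$ one has $\Re F(w)>0$ and $wF(w)=1+O(|w|^{-2})$ uniformly.

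This gives two estimates:
\begin{itemize}
\item For $\Re\lambda\ge0$ and $|\lambda-i\omega_0|\le\eta$: $|I(\lambda,h)|\ge\frac{|k|}{2}e^{-\eta\tau}\min\{\tfrac{1}{2\eta},c\sqrt{h}\}>1$ once $\eta$ is small and $h$ is large. Here $c>0$ is a lower bound for $|F|$ on a compact half-disc.
\item On $\{\Re\lambda\ge0,\ |\lambda-i\omega_0|\ge\eta\}$: the same formula gives, rigorously and uniformly, $I=G\,(1+O(h^{-1}\eta^{-2}))$. Note that (\ref{eq:estimate}) is only a formal expansion.
\end{itemize}
Since $|G-1|$ is bounded below on the second region (no zeros in $\Re\lambda\ge0$, and $G\to0$ at infinity), these two estimates alone prove the corollary, without the $\varepsilon$-neighbourhoods.

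Do not run the argument on the left half of the strip. Classical eigenvalues there are harmless anyway. Moreover, the continued integral $\sqrt{h}F(\sqrt{h}(\lambda-i\omega_0))$ (which is what the $2\pi g(\lambda/i)$ term produces) does equal $\frac{2}{k}e^{\lambda\tau}$ at points $\lambda\approx i\omega_0+w_n/\sqrt{h}$, where $w_n$ are the zeros of $\mathrm{erfc}$. So your claim of no roots in $\{|\Re\lambda|\le d\}$ is false for it.
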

Nevertheless, similar to the delay-free case \cite{sm,chibaA}, it is known that because of the appearance of continuous spectrum on the imaginary axis, we meet obstacles in obtaining a decay state even though no eigenvalues can be found on the right complex semi-plane.
\begin{lemma}\label{lemma:continuous_spectrum}
    The continuous spectrum associated with the linear evolution equation (\ref{eq:km4}) is given by
    \begin{equation}
    \sigma_c(\mathcal{A})=\sigma_c( i\mathcal{M})= i\mathbb{R}.
    \end{equation}
\end{lemma}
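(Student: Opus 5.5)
We need to show that $\sigma_c(\mathcal{A})=\sigma_c(i\mathcal{M})=i\mathbb{R}$ for (\ref{eq:km4}) on $\mathcal{H}=L^2(\mathbb{R},g(\omega)\di\omega)$. The plan is to use Section~\ref{sec:2}: $\lambda\in\sigma_c(\mathcal{A})$ if and only if $\lambda$ is in the continuous spectrum of $\Delta(\lambda)=\lambda-i\mathcal{M}-e^{-\lambda\tau}\frac{k}{2}\mathcal{P}$. This splits the proof into three steps: the spectrum of $i\mathcal{M}$ alone, the structure of $\Delta(\lambda)$ on the imaginary axis, and the complement of the imaginary axis.

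\textbf{Step 1: the multiplication operator.} The operator $i\mathcal{M}$ is multiplication by $i\omega$. Since $g>0$ everywhere, the essential range of $\omega$ is all of $\mathbb{R}$, so $\sigma(i\mathcal{M})=i\mathbb{R}$. There are no eigenvalues, because the spectral measure is absolutely continuous: $\{\omega=\omega_1\}$ has measure zero. Since $i\mathcal{M}$ is normal, the residual spectrum is empty. Hence $\sigma_c(i\mathcal{M})=i\mathbb{R}$. Concretely, for $\lambda=i\omega_1$ the range of $\lambda-i\mathcal{M}$ is dense but proper: it misses functions that do not vanish near $\omega_1$.

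\textbf{Step 2: the imaginary axis.} Fix $\lambda=i\omega_1$. The operator $e^{-\lambda\tau}\frac{k}{2}\mathcal{P}$ has rank one, and $\Delta(\lambda)$ is a rank-one perturbation of $\lambda-i\mathcal{M}$. The latter is not Fredholm, since its range is not closed. Fredholmness is stable under compact perturbations, so $\Delta(\lambda)$ is not Fredholm either, and therefore $\lambda\in\sigma(\mathcal{A})$. It remains to exclude the point and residual spectrum.

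\emph{Point spectrum.} Suppose $\Delta(\lambda)\psi=0$. Then $\psi(\omega)=c\,e^{-\lambda\tau}\frac{k}{2}\frac{1}{i(\omega_1-\omega)}$ with $c=(\psi,\mathbb{I})$. If $c\neq0$, this function is not in $L^2(g\,\di\omega)$, because $|\omega-\omega_1|^{-2}g(\omega)$ is not integrable near $\omega_1$. Hence $c=0$, so $\psi=0$, and $\Delta(\lambda)$ is injective.

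\emph{Residual spectrum.} The adjoint is $\Delta(\lambda)^\ast=\bar\lambda+i\mathcal{M}-e^{-\bar\lambda\tau}\frac{k}{2}(\cdot,\mathbb{I})\mathbb{I}$. The same computation shows $\Delta(\lambda)^\ast$ is injective. Therefore the range of $\Delta(\lambda)$ is dense, and $\lambda\notin\sigma_r(\mathcal{A})$.

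Since $\lambda\in\sigma(\mathcal{A})$ is neither in the point nor the residual spectrum, $\lambda\in\sigma_c(\mathcal{A})$. This gives $i\mathbb{R}\subset\sigma_c(\mathcal{A})$.

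\textbf{Step 3: off the imaginary axis.} Let $\Re\lambda\neq0$. Then $\lambda-i\mathcal{M}$ is boundedly invertible, and
\[
\Delta(\lambda)=(\lambda-i\mathcal{M})\bigl(\id-e^{-\lambda\tau}\tfrac{k}{2}(\lambda-i\mathcal{M})^{-1}\mathcal{P}\bigr).
\]
The second factor is the identity minus a compact (rank-one) operator, so by the Fredholm alternative it is either invertible or has a kernel. Hence $\lambda\in\varrho(\mathcal{A})\cup\sigma_p(\mathcal{A})$, and there is no continuous spectrum off $i\mathbb{R}$. Together with Step 2, this gives $\sigma_c(\mathcal{A})=i\mathbb{R}=\sigma_c(i\mathcal{M})$.

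\textbf{Main obstacle.} The delicate point is the injectivity argument in Step 2. It relies on the non-integrability of $|\omega-\omega_1|^{-2}g(\omega)$, which uses $g(\omega_1)>0$. This holds for the Gaussian, but it must be checked for each $\omega_1$. I would also verify that the adjoint computation mirrors the direct one exactly, with $\bar\lambda$ in place of $\lambda$.
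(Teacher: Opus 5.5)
Your argument is correct, and it takes a more detailed route than the paper's. The paper's proof is a single sentence: since $\mathcal{P}$ is compact, it attributes the claim to Kato's theorem on stability of the essential spectrum under relatively compact perturbations.

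You instead apply Fredholm stability pointwise in $\lambda$ to $\Delta(\lambda)=(\lambda-i\mathcal{M})-e^{-\lambda\tau}\frac{k}{2}\mathcal{P}$. This is the right way to make that idea rigorous here. Kato's theorem concerns $\lambda-(T+B)$ with a fixed perturbation $B$, whereas here the perturbation varies with $\lambda$ through $e^{-\lambda\tau}$, and the spectrum of $\mathcal{A}$ is read off from $\Delta(\lambda)$ via the paper's Section 2 lemma. On $i\mathbb{R}$, you then exclude embedded eigenvalues and residual spectrum explicitly. You use the rank-one structure of $\mathcal{P}$, the self-adjointness of $\mathcal{P}$ in computing $\Delta(\lambda)^\ast$, and the fact that $1/(\omega-\omega_1)\notin L^2(\mathbb{R},g\,\di\omega)$ because $g(\omega_1)>0$. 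Off the axis you use the Fredholm alternative for $\id-e^{-\lambda\tau}\frac{k}{2}(\lambda-i\mathcal{M})^{-1}\mathcal{P}$.

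What your route buys is a proof of the statement as written. Stability of the essential spectrum alone gives only $\sigma_{ess}(\mathcal{A})=i\mathbb{R}$. It does not rule out points of $i\mathbb{R}$ lying in $\sigma_p(\mathcal{A})$ or $\sigma_r(\mathcal{A})$, since compact perturbations can create embedded eigenvalues. That is exactly the gap your Step 2 closes. The price is that you rely on the specific rank-one form of $\mathcal{P}$ and the positivity of the Gaussian $g$, whereas the paper's citation is meant to cover general relatively compact perturbations.

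One small slip, which does not affect the proof. In Step 1, the range of $i\omega_1-i\mathcal{M}$ does not miss every function that ``does not vanish near $\omega_1$''. For instance, $|\omega-\omega_1|$ lies in the range, with a bounded preimage. A correct example of a missing function is $\mathbb{I}$, or any function continuous and nonzero at $\omega_1$. The remark is not used later, since Step 1 already follows from normality, the essential range, and the absence of atoms.
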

\begin{proof}
    Notice that the retarded resolvent operator associated with (\ref{eq:km4}) reads
    \begin{equation}\label{eq:km_retarded_resolvent}
    \Delta(\lambda)^{-1}=(\lambda- i\mathcal{M}-\frac{k}{2}e^{-\lambda\tau}\mathcal{P})^{-1}:\mathcal{H}\to \mathcal{H}.
    \end{equation}
    Since $\mathcal{P}$ is a compact operator, the statement of this lemma comes from Kato's perturbation theory about stability of essential spectrum under relatively compact perturbation \cite{kato}.
\end{proof}
As shown in the previous sections, by applying the inverse Laplace transform, the solution of (\ref{eq:km4}) can be expressed as 
\begin{equation}\label{eq:km_IL}
    u(t)=\frac{1}{2\pi  i}\lim_{b\to\infty}\int_{a- ib}^{a+ ib}e^{\lambda t}\Delta(\lambda)^{-1}(x+\frac{k}{2}\hat{\mathcal{P}}(\lambda-A_L)^{-1}f)\di \lambda.
\end{equation}
Here, $a$ is chosen to be sufficiently large such that the integral contour $\{z\in \mathbb{C}|\Re(z)=a\}$ separates the spectrum from the resolvent set. $\hat{\mathcal{P}}=\int_{-\tau}^{0}\di \eta:L^2([-\tau,0];\mathcal{H})\to \mathcal{H}$ with 
\begin{equation}
\eta(s)=
    \begin{cases}
        0, \quad \text{others},\\
    \mathcal{P},\quad s=-\tau.
    \end{cases}
\end{equation}
The spectrum associated with the linear evolution equation (\ref{eq:km4}) (singularities of $\Delta(\lambda)^{-1}$) consists of two components. One encompasses discrete eigenvalues that determined from the integral equation (\ref{eq:integral}). The other component refers to the continuous spectrum on the entire imaginary axis. 

For any $\tau>0$, when $|k|>|k_c(\tau)|$, it is easy to verify the incoherent state of (\ref{eq:km3}) with highly concentrated frequency distribution($g(\omega)=\delta_h(\omega-\omega_0)$, $h>>1$) has linear instability in the classical sense. Indeed, we can find some eigenvalue $\lambda_0$ with positive real part. Then the integral (\ref{eq:km_IL}) diverges as $t\to\infty$ because $\Re(\lambda_0)>0$.

However, in the weak coupling regime ($|k|<|k_c(\tau)|$), the integral contour in (\ref{eq:km_IL}) can not be deformed to enclose the "stable eigenvalues" on the left half-plane. In this case, the incoherent state is neutral.

\subsection{Linear stability on Gelfand triple}
As shown in the previous subsection, due to the continuous spectrum provided by the multiplication operator $ i\mathcal{M}:\mathcal{H}\to\mathcal{H}$, it is problematic to show linear stability of the incoherent solution even if there does not exist any eigenvalue with positive real part. 

In this subsection, we apply the generalized spectral theory of delayed linear evolution equations as introduced in Section \ref{sec:3} to (\ref{eq:km4}). By selecting a proper Fréchet-Montel space $\mathrm{Exp}$ decorated with stronger topology than $\mathcal{H}$, we verify that these continuous singularities disappears from the new Riemann surface induced by the Gelfand triple 
\begin{equation}
    \mathrm{Exp}\subset \mathcal{H}\subset{\mathrm{Exp}^\prime}.
\end{equation}
Then, decay states of (\ref{eq:km4}) directly come from the generalized eigenvalues (resonance poles) on the left half-plane. 

We start from defining the analytic continuation of the resolvent of $ i\mathcal{M}$. It is clear that 
\begin{equation}
    ((\lambda- i\mathcal{M})^{-1}\psi,\varphi)=\int_{-\infty}^{+\infty}\frac{1}{\lambda- i\omega}\psi(\omega)\varphi(\omega)g(\omega)\di \omega,
\end{equation}
for any $\psi,\varphi\in \text{Dom}(\mathcal{M})\subset{\mathcal{H}}$.

Denote by $\mathrm{Exp}(\beta,n)$, a Banach space containing holomorphic functions on $\{\lambda\in \mathbb{C}|\Im(\lambda)\geq -\frac{1}{n}\}$ endowed with a norm $$
||\psi||_{\beta,n}:=\sup_{\Im(\lambda)\geq-\frac{1}{n}}e^{-\beta|z|}|\psi(z)|.
$$
By taking the inductive limit with respect to both $n\in \mathbb{N^+}$ and $\beta\in \mathbb{N}$, we obtain a Fréchet-Montel space $\mathrm{Exp}$ (see \cite{chibaA}):
\begin{equation}
\mathrm{Exp}:=\bigcup_{\beta\in \mathbb{N}}\mathrm{Exp}(\beta)=\bigcup_{\beta\in \mathbb{N}}(\bigcup_{n\in \mathbb{N}^+}\mathrm{Exp}(\beta,n)).
\end{equation}
$\mathrm{Exp}$ describes a function space of holomorphic functions near the upper complex semi-plane that can grow at most exponentially. As proved in (\cite[Proposition 5.3]{chibaA}), $\mathrm{Exp}$ is a dense subspace of $\mathcal{H}$ with a stronger topology. Hence, 
$$
\mathrm{Exp}\subset \mathcal{H}\subset{\mathrm{Exp}^\prime}
$$
forms a Gelfand triple in the sense of Definition \ref{def:Gelfand triple}. We obtain a linear operator $A(\lambda) i:\mathrm{Exp}\to \mathrm{Exp}^{\prime}$ given by
$$
\langle A(\lambda) \kappa(\psi)|\varphi\rangle:= 
    \begin{cases}
        \int_{-\infty}^{+\infty}\frac{1}{\lambda- i\omega}\psi(\omega)\varphi(\omega)g(\omega)\di \omega+2\pi \psi(\frac{\lambda}{ i})\varphi(\frac{\lambda}{ i})g(\frac{\lambda}{ i}),\ \Re(\lambda)<0,\\
    \lim_{\Re(\lambda)\to0+}\int_{-\infty}^{+\infty}\frac{1}{\lambda- i\omega}\psi(\omega)\varphi(\omega)g(\omega)\di \omega,\ \lambda\in \mathbb{R},\\
        \int_{-\infty}^{+\infty}\frac{1}{\lambda- i\omega}\psi(\omega)\varphi(\omega)g(\omega)\di \omega,\ \Re(\lambda)>0,\\
    \end{cases}
$$
for each $\psi,\varphi\in \mathrm{Exp}$. It provides a analytic continuation of $((\lambda- i\mathcal{M})^{-1}\psi,\varphi)$ across the branch cut $ i\mathbb{R}$. 

We can verify that $\textbf{(H1-7)}$ are satisfied with $H=\mathcal{M}$ and $K=\frac{k}{2}\mathcal{P}$. The bi-dual operator $\mathcal{P}^{\times}:\mathrm{Exp}^{\prime}\to\mathrm{Exp}^{\prime}$ is given by 
\begin{equation}\label{eq:p_times}
    \langle \mathcal{P}^{\times}\psi|\varphi\rangle=\langle\psi|\mathbb{I}\rangle\langle \mathbb{I}|\varphi\rangle,
\end{equation}
where $\mathbb{I}\equiv1\in \mathrm{Exp}$, and the range $\text{Ran}(\mathcal{P}^{\times})$ is included in $i\mathrm{Exp}$.

In the sense of Definition \ref{def:resonance_pole}, generalized eigenvalues (resonance poles) associated with (\ref{eq:km4}) are determined as follows.
\begin{lemma}\label{lemma:resonance_pole}
    The generalized eigenvalues are given as roots of the following integral equation
    \begin{equation}\label{eq:resonance_pole}
    1=
        \begin{cases}
            \frac{k}{2}e^{-\lambda\tau}\int_{-\infty}^{+\infty}\frac{g(\omega)}{\lambda- i\omega}\di \omega,\quad \Re(\lambda)>0,\\
            \frac{k}{2}e^{-\lambda\tau}\int_{-\infty}^{+\infty}\frac{g(\omega)}{\lambda- i\omega}\di \omega+k\pi e^{-\lambda\tau}g(\frac{\lambda}{ i})\quad \Re(\lambda)<0.
        \end{cases}
    \end{equation}
\end{lemma}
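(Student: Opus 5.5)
We work with the equivalent form (\ref{eq:resonance_pole2}) of Definition \ref{def:resonance_pole}. Here $\lambda$ is a generalized eigenvalue exactly when there is a nonzero $x\in\kappa\mathrm{Exp}$ with $(\id-e^{-\lambda\tau}\frac{k}{2}\mathcal{P}^\times A(\lambda))x=0$. The rank-one structure (\ref{eq:p_times}) of $\mathcal{P}^\times$ reduces this operator equation to a scalar one, just as in the classical computation for (\ref{eq:integral}).

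\textbf{Step 1 (necessity).} Suppose $x=\kappa(\psi)$ is a nonzero solution. By (\ref{eq:p_times}), for every $\varphi\in\mathrm{Exp}$,
\[
\langle \kappa(\psi)|\varphi\rangle=\frac{k}{2}e^{-\lambda\tau}\langle A(\lambda)\kappa(\psi)|\mathbb{I}\rangle\,\langle\mathbb{I}|\varphi\rangle .
\]
Hence $\kappa(\psi)=c\,\kappa(\mathbb{I})$ with $c=\frac{k}{2}e^{-\lambda\tau}\langle A(\lambda)\kappa(\psi)|\mathbb{I}\rangle$. Because $\psi\neq 0$ we have $c\neq 0$, so we may take $\psi=\mathbb{I}$ after rescaling. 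Substituting $\psi=\mathbb{I}$ back gives
\[
1=\frac{k}{2}e^{-\lambda\tau}\langle A(\lambda)\kappa(\mathbb{I})|\mathbb{I}\rangle .
\]

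\textbf{Step 2 (evaluation).} Insert $\psi=\varphi=\mathbb{I}$ into the explicit three-case formula for $\langle A(\lambda)\kappa(\psi)|\varphi\rangle$ on $\mathrm{Exp}$.
\begin{itemize}
\item For $\Re(\lambda)>0$ this gives $\int\frac{g(\omega)}{\lambda-i\omega}\di\omega$.
\item For $\Re(\lambda)<0$ it gives $\int\frac{g(\omega)}{\lambda-i\omega}\di\omega+2\pi g(\lambda/i)$. The factor $\frac{k}{2}\cdot 2\pi=k\pi$ then produces exactly the second line of (\ref{eq:resonance_pole}).
\item The case $\lambda\in i\mathbb{R}$ is the boundary limit and agrees continuously with either expression.
\end{itemize}
This step requires that $g$, being Gaussian, is entire, so that $g(\lambda/i)$ makes sense. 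It also requires that $\mathbb{I}\in\mathrm{Exp}$, which holds trivially.

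\textbf{Step 3 (sufficiency).} Conversely, suppose $\lambda$ solves (\ref{eq:resonance_pole}). Take $x=\kappa(\mathbb{I})$. By (\ref{eq:p_times}),
\[
e^{-\lambda\tau}\frac{k}{2}\mathcal{P}^\times A(\lambda)\kappa(\mathbb{I})=\Big(\frac{k}{2}e^{-\lambda\tau}\langle A(\lambda)\kappa(\mathbb{I})|\mathbb{I}\rangle\Big)\kappa(\mathbb{I})=\kappa(\mathbb{I}).
\]
So $x$ is a nonzero solution of (\ref{eq:resonance_pole2}), and $\lambda$ is a generalized eigenvalue.

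The main point needing care is Step 1: we must justify that the equality holds in $\mathrm{Exp}'$ and that it forces $\psi$ to be a multiple of $\mathbb{I}$. This uses the injectivity of $\kappa$ and the fact, stated after (\ref{eq:p_times}), that $\mathrm{Ran}(\mathcal{P}^\times)\subset\kappa\mathrm{Exp}$ is spanned by $\kappa(\mathbb{I})$. The sesquilinear convention in the pairing must be tracked so that no complex conjugates appear. This is harmless here because $g$ and $\mathbb{I}$ are real on the real axis.
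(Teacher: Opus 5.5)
Your proof is correct and follows essentially the paper's route. The rank-one form (\ref{eq:p_times}) of $\mathcal{P}^\times$ reduces the generalized characteristic equation to the scalar condition $1=\frac{k}{2}e^{-\lambda\tau}\langle A(\lambda)\kappa(\mathbb{I})|\mathbb{I}\rangle$, which is then evaluated with the three-case formula for $A(\lambda)$. The differences are minor: you work directly with the $\kappa\mathrm{Exp}$ form (\ref{eq:resonance_pole2}), which gives $c\neq 0$ for free, and you also prove the converse, whereas the paper starts from $\psi\in\mathrm{Ran}(A(\lambda))$, applies $\mathcal{P}^\times$, and divides by $\langle\psi|\mathbb{I}\rangle$ without justifying that it is nonzero.
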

\begin{proof}
    If $\lambda\in \mathbb{C}$ is a generalized eigenvalue associated with (\ref{eq:km4}), there exists some $\psi\in \text{Ran}(A(\lambda))$ such that
    \begin{equation}\label{eq:cha_eq1}
        \langle (\id-\frac{k}{2}e^{-\lambda\tau}\mathcal{P}^{\times}A(\lambda))\mathcal{P}^{\times}\psi|\varphi\rangle=0,
    \end{equation}
    for any $\varphi\in \mathrm{Exp}$. It follows from (\ref{eq:p_times}) that 
    $$
    \langle \mathcal{P}^\times A(\lambda)\mathcal{P}^\times\psi|\varphi\rangle=\langle\mathbb{I}|\varphi\rangle\langle A(\lambda)\mathcal{P}^\times\psi|\mathbb{I}\rangle =\langle \psi|\mathbb{I}\rangle\langle\mathbb{I}|\varphi\rangle \langle A(\lambda)\mathbb{I}|\mathbb{I}\rangle.
    $$
    Then, (\ref{eq:cha_eq1}) can be rewritten as 
    $$
    \begin{aligned}
        1&=\frac{k}{2}e^{-\lambda\tau}\langle A(\lambda)\mathbb{I}|\mathbb{I}\rangle\\
        &=\begin{cases}
            \frac{k}{2}e^{-\lambda\tau}\int_{-\infty}^{+\infty}\frac{g(\omega)}{\lambda- i\omega}\di \omega,\quad \Re(\lambda)>0,\\
            \frac{k}{2}e^{-\lambda\tau}\int_{-\infty}^{+\infty}\frac{g(\omega)}{\lambda- i\omega}\di \omega+k\pi e^{-\lambda\tau}g(\frac{\lambda}{ i})\quad \Re(\lambda)<0,
        \end{cases}
    \end{aligned}
    $$
    which completes the proof.
\end{proof}
Recall that in this section, the retarded resolvent operator of the delayed linear evolution equation (\ref{eq:km4}) $\Delta(\lambda)^{-1}:\mathcal{H}\to \mathcal{H}$ is given by (\ref{eq:km_retarded_resolvent}). Under hypotheses $\textbf{(H1-7)}$, we can define a corresponding generalized retarded resolvent operator $\mathcal{R}_\tau(\lambda):\kappa\mathrm{Exp}\to \mathrm{Exp}^{\prime}$ as 
\begin{equation}\label{eq:km_generalized_retarded_resolvent}
    \mathcal{R}_\tau(\lambda)=A(\lambda)\circ(\id-\frac{k}{2}e^{-\lambda\tau}\mathcal{P}^{\times}A(\lambda))^{-1}=(\id-\frac{k}{2}e^{-\lambda\tau}A(\lambda)\mathcal{P}^{\times})^{-1}\circ A(\lambda).
\end{equation}
The generalized spectrum can be defined as singularities of $\mathcal{R}_\tau(\lambda) i:\mathrm{Exp}\to\mathrm{Exp}^{\prime} $ in the fashion of Waelbroeck \cite{wael} as seen in Definition \ref{def:generalized_spectrum}. The following theorem concerns about distribution of the generalized spectrum associated with (\ref{eq:km4}).
\begin{theorem}\label{thm:km_resonance_pole}
Let $\mathcal{H}=L^2(\mathbb{R},g(\omega)\di \omega)$ and $g(\omega)=\delta_h(\omega-\omega_0):=\sqrt{\frac{h}{\pi}}e^{-h(\omega-\omega_0)^2}$. Several spectral properties associated with $\mathcal{R}_\tau(\lambda)$ on the Gelfand triple $\mathrm{Exp}\subset \mathcal{H}\subset\mathrm{Exp}^{\prime}$ can be concluded as\\
\noindent $\mathbf{(a)}$ The generalized spectrum consists of only generalized eigenvalues. In other words, we have
$$\hat{\sigma}_c(\mathcal{A})=\hat{\sigma}_r(\mathcal{A})=\emptyset;$$\\
\noindent $\mathbf{(b)}$ Let $\tau>0$, $\omega_0\not=0$ and $|k|<|k_c(\tau)|$, there exists some $h^\ast$ such that all of the generalized eigenvalues have only negative real parts for any $h>h^\ast$.  
\end{theorem}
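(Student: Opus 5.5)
For (a) I will invoke Theorem \ref{thm:riesz schauder}, so the task is to verify the uniform compactness hypothesis for $C(\lambda):=\kappa^{-1}K^\times A(\lambda)\kappa$ with $K=\frac{k}{2}\mathcal{P}$. By (\ref{eq:p_times}), for $\psi\in\mathrm{Exp}$,
\[
C(\lambda)\psi=\frac{k}{2}\,\langle A(\lambda)\kappa(\psi)|\mathbb{I}\rangle\,\mathbb{I},
\]
so $C(\lambda)$ has rank one with range $\mathrm{span}\{\mathbb{I}\}$. It therefore suffices to find one neighborhood $V$ of $0$ in $\mathrm{Exp}$ on which the scalars $\langle A(\lambda)\kappa(\psi)|\mathbb{I}\rangle$ are bounded, locally uniformly in $\lambda$. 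A bounded subset of a one-dimensional space is relatively compact. For $V$ I take the unit ball of a single step $\mathrm{Exp}(\beta,n)$, pulled back to a neighborhood through the continuous inclusion of the inductive limit. On $\Re(\lambda)\ge0$ I bound the Cauchy integral by shifting the contour into $\Im(\omega)\ge -1/n$, where $\psi g$ is holomorphic and, because of the Gaussian factor, decays faster than any exponential. The residue term $2\pi\psi(\lambda/i)g(\lambda/i)$ appearing for $\Re(\lambda)<0$ is controlled by $e^{\beta|\lambda|}\|\psi\|_{\beta,n}\sup|g|$.

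This gives uniform bounds on compact sets of $\lambda$ only, so I read ``uniformly in $\lambda$'' locally, which is all that Lemma \ref{lemma:pointwise_define} and Theorem \ref{thm:riesz schauder} use. Once (H1--6) are checked (the paper asserts them: $Y$ consists of polynomially weighted elements, and (H6) holds because $\mathrm{Ran}\,\mathcal{P}^\times\subset\kappa\mathrm{Exp}$), the conclusion $\hat\sigma=\hat\sigma_p$ follows, and hence $\hat\sigma_c=\hat\sigma_r=\emptyset$.

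For (b), Lemma \ref{lemma:resonance_pole} reduces the problem to locating the roots of $F_h(\lambda)=1$, where $F_h(\lambda)=\frac{k}{2}e^{-\lambda\tau}\langle A(\lambda)\mathbb{I}|\mathbb{I}\rangle$. On $\Re(\lambda)>0$, $F_h$ coincides with $I(\lambda,h)$, so Proposition \ref{prop:correspondance} together with Corollaries \ref{coro:critical_value} and \ref{coro:transition_point} excludes roots with positive real part once $h>h^\ast$. Roots on $\Re(\lambda)=0$ remain to be excluded, and then roots near the axis. Because $F_h$ is entire (the analytic continuation of the Gaussian Cauchy transform), I will compare $F_h$ with $G(\lambda)=\frac{k}{2}\frac{e^{-\lambda\tau}}{\lambda-i\omega_0}$ on a strip $-\delta\le\Re(\lambda)\le d$. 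Writing $\zeta=\lambda/i$, the continuation of the Cauchy transform is $\frac{1}{\lambda-i\omega_0}\bigl(1+O(1/(h|\lambda-i\omega_0|^2))\bigr)$ plus a term of size $\sqrt{h}\,e^{-h(\zeta-\omega_0)^2}$. That term is exponentially small in $h$ when $|\Im\zeta|\le\delta$ and $|\Re\zeta-\omega_0|$ is bounded below, but it is large near $\zeta=\omega_0$.

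The main obstacle is the neighborhood of $\lambda=i\omega_0$. There $G$ has a pole while $F_h$ does not, and the Gaussian residue term is of order $\sqrt h$. My plan there is a direct rescaling, $\lambda=i\omega_0+\mu/\sqrt h$, which turns $F_h$ into $\frac{k}{2}e^{-i\omega_0\tau}\sqrt{h}\,w(\cdot)$ with $w$ the Faddeeva function. I then need to show that $|F_h|\gg1$ on the disc $|\mu|\le R$. This should hold because $w$ has no zeros in the relevant half-plane, since its zeros lie in $\Re\mu<0$ at distance bounded below. The condition $\omega_0\neq0$ is used here to keep this region away from the symmetric point $-i\omega_0$. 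For $|\mu|\ge R$ I match with the expansion (\ref{eq:estimate}), and then apply Rouché's theorem on the strip exactly as in Proposition \ref{prop:correspondance}. This places every root within $\varepsilon$ of a root of $G=1$; all of those have negative real part, and $h^\ast$ can be chosen so that $\varepsilon$ is smaller than their distance to the imaginary axis. Large $|\Im\lambda|$ is handled by $|G|\to0$ there.
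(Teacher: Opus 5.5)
\textbf{Part (a).} This follows the paper's route: $C(\lambda)$ has rank one with range $\mathrm{span}\{\mathbb{I}\}$, and you then apply Theorem~\ref{thm:riesz schauder}. Reading ``uniformly in $\lambda$'' locally is consistent with how that theorem and Lemma~\ref{lemma:pointwise_define} are used.

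However, the neighborhood you choose is not a neighborhood. The unit ball of a single step $\mathrm{Exp}(\beta,n)$ lies in a proper subspace of $\mathrm{Exp}$. It is therefore not absorbing, so it is never a $0$-neighborhood of the inductive limit; continuity of the inclusion runs the other way.

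The fix is to take $V=\{\psi:\sup_{\lambda\in L}|\langle A(\lambda)\kappa(\psi)|\mathbb{I}\rangle|\le 1\}$ for a compact set $L$. Your per-step contour estimates show that $V\cap\mathrm{Exp}(\beta,n)$ is a neighborhood in every step. An absolutely convex set with that property is a $0$-neighborhood of the locally convex inductive limit. Alternatively, invoke Banach--Steinhaus on the barreled space $\mathrm{Exp}$.

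\textbf{Part (b): the gap.} The Rouch\'e step on the two-sided strip $-\delta\le\Re\lambda\le d$ fails. In your own rescaling, $F_h(\lambda)=\frac{k}{2}e^{-\lambda\tau}\sqrt{\pi h}\,w(i\mu)$ with $\mu=\sqrt{h}(\lambda-i\omega_0)$. The map $\mu\mapsto w(i\mu)$ has infinitely many simple zeros $\mu_n$, all in $\Re\mu<0$, lying asymptotically along $\arg\mu=\pm3\pi/4$; the nearest is at $|\mu|\approx2.4$.

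By Hurwitz, for each fixed $n$ and large $h$, $F_h=1$ has a root near $i\omega_0+\mu_n/\sqrt{h}$. That root lies inside your strip, at distance $O(h^{-1/2})$ from $i\omega_0$, so it is not within $\varepsilon$ of any root of $G=1$. Hence the claim that Rouch\'e ``places every root within $\varepsilon$ of a root of $G=1$'' is false.

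Correspondingly, in and near the sector $|\Im\lambda-\omega_0|<-\Re\lambda$ the residue term $2\sqrt{\pi h}\,e^{h(\lambda-i\omega_0)^2}$ is not negligible. There, $|F_h|\gg1$ fails on the full disc $|\mu|\le R$ once $R>|\mu_1|$, and the matching with (\ref{eq:estimate}) fails for $|\mu|\ge R$. No Rouch\'e contour can be run through that region.

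These roots have negative real part, so (b) itself is not contradicted. But you must stay out of $\Re\lambda<0$, and once you do, no Rouch\'e argument is needed.

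\textbf{Part (b): a repair.} On $\Re\lambda\ge0$ write $F_h(\lambda)=G(\lambda)\,\Theta(\mu)$ with $\Theta(\mu)=\sqrt{\pi}\,\mu\,w(i\mu)$.
\begin{enumerate}
\item Since $\Re w>0$ on $\Im z\ge0$, $|w(i\mu)|$ is bounded below on $\{|\mu|\le R,\ \Re\mu\ge0\}$.
\item Since $w(z)=\frac{i}{\sqrt{\pi}z}(1+O(|z|^{-2}))$ uniformly on $\Im z\ge0$, $\Theta\to1$ uniformly as $|\mu|\to\infty$ with $\Re\mu\ge0$.
\item Stability of (\ref{eq:lambert}), together with $|1/G(\lambda)|\ge\frac{2}{|k|}|\lambda-i\omega_0|$, gives $|1-1/G|\ge c_0>0$ on $\Re\lambda\ge0$.
\end{enumerate}
Now pick $R$ with $|\Theta-1|<c_0$ for $|\mu|\ge R$, $\Re\mu\ge0$. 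There, $F_h=1$, i.e.\ $\Theta(\mu)=1/G(\lambda)$, is impossible. For $|\mu|\le R$ one has $|F_h|\gtrsim\sqrt{h}>1$.

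This handles the whole closed half-plane, including the axis and the thin strip $0\le\Re\lambda\le d$. The paper's proof never addresses that region: it only cites Theorem~\ref{thm:cplus} and Corollary~\ref{coro:transition_point}, and the bound in Proposition~\ref{prop:correspondance} degenerates like $\Re(\lambda)^{-2}$ at the axis. So isolating $i\omega_0$ via the Faddeeva rescaling is the right idea and goes beyond the paper. The hypothesis $\omega_0\neq0$ has nothing to do with the point $-i\omega_0$; it is only inherited from Corollary~\ref{coro:critical_value}.

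\textbf{The statement itself.} Both your argument and the paper's rest on Corollary~\ref{coro:critical_value}, whose conclusion is wrong in sign. Theorem~\ref{thm:nishi} gives stability of (\ref{eq:lambert}) if and only if $k$ lies strictly between $0$ and $k_c(\tau)$.

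For example, $\omega_0\tau=\pi$ gives $k_c=\pi/\tau$. Yet for $-\pi/\tau<k<0$, the substitution $\lambda=i\omega_0+\nu$ yields $\nu=\frac{|k|}{2}e^{-\nu\tau}$, which has a positive root. So (b) as stated fails when $k$ has the sign opposite to $k_c(\tau)$; the hypothesis should read $0<k/k_c(\tau)<1$.
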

\begin{proof}
    $\mathbf{(a)}$ directly follows from the compactness of $\mathcal{P}$ and Theorem \ref{thm:riesz schauder}. 
    
    For $\mathbf{(b)}$, it follows from Theorem \ref{thm:cplus} that generalized eigenvalues coincide with eigenvalues in the right half-plane. From Corollary \ref{coro:transition_point}, when $|k|<|k_c(\tau)|$, we have $\hat{\sigma}_p(\mathcal{A})\cap \{\lambda| \Re(\lambda)>0\}={\sigma}_p(\mathcal{A})\cap \{\lambda| \Re(\lambda)>0\}=\emptyset$. If $\lambda$ is a generalized eigenvalue with $\Re(\lambda)<0$, we have 
    \begin{equation}\label{eq:km_resonance_pole}
        1=e^{-\lambda\tau}\frac{k}{2}\sqrt{\frac{h}{\pi}}\int_{-\infty}^{+\infty}\frac{e^{-h(\omega-\omega_0)^2}}{\lambda- i\omega}\di \omega+k\sqrt{h\pi}e^{-\lambda\tau+h(\lambda- i\omega_0)^2}.
    \end{equation}
    Denote by $f(\lambda,h)$ the last term in (\ref{eq:km_resonance_pole})
    $$
    f(\lambda,h)=k\sqrt{h\pi}e^{-\lambda\tau +h(\lambda- i\omega_0)^2}.
    $$
    It follows that 
    $$
    f(\lambda,h)=k\sqrt{h\pi}A(\lambda,h)e^{h[\Re(\lambda)^2-(\Im(\lambda)-\omega_0)^2]},
    $$
    with $A(\lambda,h)=e^{-\lambda\tau+i(2h\Re(\lambda)(\Im(\lambda)-\omega_0))}$ and $|A(\lambda,h)|=e^{-\lambda\tau}$. The two lines $|\Re(\lambda)|=|\Im(\lambda- i\omega_0)|$ divide the complex plane into four sectors. When $|\Im(\lambda- i\omega_0)|>|\Re(\lambda)|$, $f(\lambda,h)$ converges to $0$ as $h\to\infty$. On the other hand, when $|\Im(\lambda- i\omega_0)|<|\Re(\lambda)|$, it is easy to verify that $f(\lambda,h)$ diverges as $h\to\infty$.

    Similar to the proof of Proposition \ref{prop:correspondance}, we can prove the set of generalized eigenvalues $\hat{\sigma}_p(\mathcal{\mathcal{A}})$ converges to a set 
    $$
    \{\lambda\in \mathbb{C}|G(\lambda)=1\ \text{and}\ |\Im(\lambda- i\omega_0)|>|\Re(\lambda)|\}
    $$
    in Hausdorff distance as $h\to \infty$.
    Recall that $G(\lambda)=\frac{k}{2}\frac{e^{-\lambda\tau}}{\lambda- i\omega_0}$. In particular, when $|k|<|k_c(\tau)|$, we can find infinite generalized eigenvalues $\{\lambda_p\}_{p\in\mathbb{N}}$ with $|\lambda_p|\to\infty$ for any sufficiently large $h$ ($h>h^\ast$).
\end{proof}
Recall that the complex order parameter $r(t)$ is defined by 
\begin{equation}\label{eq:km_complex_order_parameter}
r(t)=\int_{-\infty}^{+\infty}\int_0^{2\pi}e^{ i\theta}\rho(t,\omega,\theta)\delta_h(\omega-\omega_0)\di \theta\di \omega =(F_{-1}(t,\omega),\mathbb{I}).
\end{equation}
$F_{-1}(t,\omega)$ is the Fourier coefficient of $\rho(t,\omega,\theta)$ at $-1$-st position and its linear governing differential equation is given by (\ref{eq:km4}). Applying these properties of generalized spectrum defined within a Gelfand triple $\mathrm{Exp}\subset \mathcal{H}\subset{\mathrm{Exp}^\prime}$, we can prove the linear stability of incoherent solution $r(t)=0$ rigorously as following. 
\begin{theorem}\label{thm:linear_stability}
    When $|k|<|k_c(\tau)|$ and $\omega_0\not=0$, there exists $h^\ast$ such that the order parameter $r(t)$ is linearly stable for any $h>h^\ast$. Here, the linear stability refers that for any initial condition $f\in C([-\tau,0];\mathrm{Exp})$ with $f(0)=x$, $r(t)$ decays exponentially to zero as $t\to\infty$. 
\end{theorem}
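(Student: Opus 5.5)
The plan is to express $r(t)=(u(t),\mathbb{I})=\langle\kappa(u(t))|\mathbb{I}\rangle$ through the inverse Laplace representation (\ref{eq:sec3_gelfand}) with $\varphi=\mathbb{I}\in\mathrm{Exp}$, and then deform the contour using Corollary \ref{coro:eigenexpansion}. Because $\mathcal{P}^\times$ has rank one, $\mathcal{R}_\tau(\lambda)$ can be computed explicitly. Put $D(\lambda):=1-\frac{k}{2}e^{-\lambda\tau}\langle A(\lambda)\kappa(\mathbb{I})|\mathbb{I}\rangle$, which is the function whose zeros give the generalized eigenvalues in Lemma \ref{lemma:resonance_pole}. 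Inverting $\id-\frac{k}{2}e^{-\lambda\tau}\mathcal{P}^\times A(\lambda)$ on $\kappa\mathrm{Exp}$ by the Sherman--Morrison formula gives
\[
\langle\mathcal{R}_\tau(\lambda)\kappa(\psi)|\mathbb{I}\rangle=\frac{\langle A(\lambda)\kappa(\psi)|\mathbb{I}\rangle}{D(\lambda)} .
\]
The data enter through $\psi=x+\frac{k}{2}\hat{\mathcal{P}}(\lambda-A_L)^{-1}f=x+\frac{k}{2}\int_{-\tau}^0e^{-\lambda(\tau+s)}(f(s),\mathbb{I})\di s\,\mathbb{I}$. This lies in $\mathrm{Exp}$ because $f\in C([-\tau,0];\mathrm{Exp})$, and it is entire in $\lambda$.

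The proof then proceeds in three steps.

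\textbf{Step 1: holomorphy.} I will show that $\lambda\mapsto\langle A(\lambda)\kappa(\psi)|\mathbb{I}\rangle$ is holomorphic on all of $\mathbb{C}$. For $\Re\lambda<0$ it is given by the formula with the additional term $2\pi\psi(\lambda/i)g(\lambda/i)$. Here $g$ is entire, and $\psi\in\mathrm{Exp}$ is holomorphic on $\Im z\ge-1/n$. The point $\lambda/i=-i\lambda$ has imaginary part $-\Re\lambda$, which is positive when $\Re\lambda<0$, so it lies in the upper half-plane as required.

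\textbf{Step 2: the poles.} Theorem \ref{thm:km_resonance_pole} says the only singularities are the zeros $\lambda_p$ of $D$, and by part (b) they all satisfy $\Re\lambda_p<0$ once $h>h^\ast$. I need more than this: the existence of $\delta>0$ with $\sup_p\Re\lambda_p\le-2\delta$. From the proof of Theorem \ref{thm:km_resonance_pole}, the $\lambda_p$ approach roots of $G(\lambda)=1$. Those roots lie on the Lambert-$W$ chain, whose real parts tend to $-\infty$, and only finitely many of them lie in any strip $\{\Re\lambda>-c\}$. The part of $\hat\sigma_p$ that remains near the imaginary axis is therefore finite, and a uniform gap follows. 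A further requirement is that no spurious zeros of $D$ accumulate near $i\mathbb{R}$. Near the line $\Im\lambda=\omega_0$ the term $f(\lambda,h)$ is large, so zeros there are excluded.

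\textbf{Step 3: contour shift.} I will move $\Gamma=\{\Re\lambda=a\}$ to $\Gamma'=\{\Re\lambda=-\delta\}$ and pick up the residues at the finitely many poles in $-\delta<\Re\lambda<a$; by Step 2 there are none. This gives
\[
r(t)=\frac{1}{2\pi i}\int_{\Gamma'}e^{\lambda t}\frac{\langle A(\lambda)\kappa(\psi_\lambda)|\mathbb{I}\rangle}{D(\lambda)}\di\lambda,
\]
so $|r(t)|\le Ce^{-\delta t}$, provided the integrand is integrable on $\Gamma'$ and the horizontal connecting segments vanish as $|\Im\lambda|\to\infty$.

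The main obstacle is the bound as $|\Im\lambda|\to\infty$ on the strip $-\delta\le\Re\lambda\le a$.

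On the $A$ side, the Cauchy integral is $O(1/|\lambda|)$, provided $\psi$ and $g$ are decaying. The residue term $2\pi\psi(-i\lambda)g(-i\lambda)$ is controlled by the Gaussian: $|g(-i\lambda)|\sim e^{-h(\Im\lambda-\omega_0)^2+h(\Re\lambda)^2}$, and this beats the at most exponential growth of $\psi\in\mathrm{Exp}$ when $|\Re\lambda|\le\delta$. Also $e^{-\lambda\tau}$ is bounded on the strip.

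On the $D$ side, it follows that $D(\lambda)\to1$, so $1/D$ is bounded for large $|\Im\lambda|$.

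The one remaining issue is decay of the term $\langle A(\lambda)\kappa(x)|\mathbb{I}\rangle$ itself. It is only $O(1/|\lambda|)$, which is not absolutely integrable. I would handle this as usual, by subtracting the free part $\langle A(\lambda)\kappa(\psi)|\mathbb{I}\rangle$, whose inverse transform $\int e^{i\omega t}\psi(\omega)g(\omega)\di\omega$ decays like a Gaussian Fourier transform. The remainder carries an extra factor $(1-D)/D=O(1/|\lambda|)$ and is $O(|\lambda|^{-2})$, hence integrable.
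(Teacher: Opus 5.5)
\textbf{Verdict.} Your argument is sound in outline and takes a genuinely different route from the paper's. The one real flaw is the justification of the spectral gap in Step~2: it is wrong as stated, but it can be repaired with estimates you already have.

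\textbf{Comparison.} The paper does the following:
\begin{itemize}
\item it computes the generalized Riesz projection at \emph{every} resonance $\lambda_p$, assuming all of them are simple;
\item it deforms the Bromwich line into a left-opening sector $\Gamma_1(l)\cup\Gamma_2(l)$;
\item it bounds $\langle\mathcal{R}_\tau(\lambda)\kappa(\psi)|\varphi\rangle$ by $e^{(\beta_1+\beta_2)l}$ on the arcs, using that numerator and denominator are both dominated by $\delta_h(\lambda/i)$ deep in the left half-plane;
\item it arrives at the full expansion $r(t)=\sum_pD_pe^{\lambda_pt}\langle\psi_p|x+f_{\lambda_p}\rangle\langle\psi_p|\mathbb{I}\rangle$, valid for $t$ beyond a threshold.
\end{itemize}
You use the same rank-one formula but never leave the strip $\{-\delta\le\Re\lambda\le a\}$. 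You invert the free part explicitly and shift only the $O(|\lambda|^{-2})$ remainder.

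The paper's route gives more information, namely the modes and an asymptotic expansion. In exchange it needs simplicity of all poles, the global shape of the resonance set, and convergence of an infinite residue series. In the end it still needs $\sup_p\Re\lambda_p<0$. It also does not explain how the merely conditionally convergent Bromwich integral is joined to the sector; your subtraction of the free part handles exactly that. Your route needs only the zeros of $D$ in a strip, holds for all $t\ge0$ with an explicit rate, and does not depend on where the resonances lie far from the axis.

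One small correction: for the $\lambda$-dependent piece $c_\lambda\langle A(\lambda)\kappa(\mathbb{I})|\mathbb{I}\rangle$, with $c_\lambda=\frac{k}{2}\int_{-\tau}^0e^{-\lambda(\tau+s)}(f(s),\mathbb{I})\di s$, the inverse transform is not $\int e^{i\omega t}\psi g\,\di\omega$. It is the convolution $\frac{k}{2}\int_0^{\min(t,\tau)}(f(s-\tau),\mathbb{I})\,e^{i\omega_0(t-s)-(t-s)^2/(4h)}\di s$, which still decays like a Gaussian for $t>\tau$.

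\textbf{Step~2.} Two of your claims there are false: that all resonances shadow roots of $G=1$, and that the size of $f(\lambda,h)$ keeps zeros of $D$ away from $i\omega_0$.
\begin{itemize}
\item For $\Re\lambda>0$, and hence everywhere by continuation, $\langle A(\lambda)\kappa(\mathbb{I})|\mathbb{I}\rangle=\sqrt{\pi h}\,w\bigl(i\sqrt h(\lambda-i\omega_0)\bigr)$, where $w(z)=e^{-z^2}\mathrm{erfc}(-iz)$ is the Faddeeva function. So $D(\lambda)=0$ reads $w\bigl(i\sqrt h(\lambda-i\omega_0)\bigr)=2e^{\lambda\tau}/(k\sqrt{\pi h})$.
\item The right-hand side tends to $0$ uniformly while $\sqrt h|\lambda-i\omega_0|$ stays bounded. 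By Hurwitz's theorem, each simple zero $z_j$ of $w$ then produces a resonance $\lambda\approx i\omega_0-iz_j/\sqrt h$.
\item The zeros of $w$ lie in the lower half-plane, the smallest at $z\approx\pm1.99-1.35i$. This gives resonances with $\Re\lambda\approx-1.35/\sqrt h$.
\item At these points $|f(\lambda,h)|$ is indeed of order $\sqrt h$, but it is cancelled by the Cauchy-integral term, which has the same order near $i\omega_0$.
\end{itemize}
These resonances converge to $i\omega_0$, a pole of $G$, as $h\to\infty$. So the Hausdorff convergence you borrow from the proof of Theorem~\ref{thm:km_resonance_pole} fails. I expect (by a Rouch\'e argument I have not written out) that this family continues to infinity along $\arg(\lambda-i\omega_0)\to\pm3\pi/4$, away from the Lambert-$W$ chain. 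If so, the paper's claim $|\Re\lambda_p|/|\Im\lambda_p|\to0$ fails too, which is a further reason to prefer your strip-only argument. In any case there is no gap uniform in $h$: $\delta$ is at most of order $h^{-1/2}$.

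\textbf{The fix.} For fixed $h$ you need none of the Lambert-$W$ picture. Your own Step~3 estimate $D(\lambda)=1+O(|\lambda|^{-1})$ holds uniformly on every strip $\{-c\le\Re\lambda\le a\}$. Hence $D$ has only finitely many zeros there, and Theorem~\ref{thm:km_resonance_pole}(b) places them all in $\Re\lambda<0$.

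That conclusion of (b) is correct even though its proof leans on the same Hausdorff claim:
\begin{itemize}
\item Near $i\omega_0$: for $\Re\lambda\ge0$ the point $i\sqrt h(\lambda-i\omega_0)$ lies in the closed upper half-plane. There $w$ has no zeros and $|w(z)|\gtrsim(1+|z|)^{-1}$, which rules out zeros of $D$ near $i\omega_0$.
\item Elsewhere on $\Re\lambda\ge0$: $D\approx1-G$, and $|1-G|$ is bounded below when $|k|<|k_c(\tau)|$.
\end{itemize}
With Step~2 replaced by this argument, your proof goes through.
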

\begin{proof}
    When $h$ is sufficiently large, we can find infinite generalized eigenvalues $\{\lambda_p\}_{p\in\mathbb{N}}\subset \{\lambda|\Re(\lambda)<0\}$ with $|\lambda_p|\to \infty$. For each $\lambda_p$, let $\gamma_p$ be a $C^{\infty}$ neighborhood which encloses $\lambda_p$. Firstly, we calculate the following integral (i.e., the generalized Riesz projection)
    \begin{equation}\label{eq:I_k}
        I_p=\frac{1}{2\pi i}\int_{\gamma_p}\langle \mathcal{R}_\tau(\lambda)\kappa(\psi)|\varphi\rangle\di \lambda,
    \end{equation}  
    where $\psi$ and $\varphi$ take values in $\text{Exp}$. Indeed, from (\ref{eq:km_generalized_retarded_resolvent}), 
    $$
    (\id-\frac{k}{2}e^{-\lambda\tau}A(\lambda)\mathcal{P}^\times)\circ\mathcal{R}_\tau(\lambda)\kappa(\psi)=A(\lambda)\kappa(\psi) 
    $$
    holds. It is easy to verify that
    $$
    \langle A(\lambda)\mathcal{P}^\times\mathcal{R}_\tau(\lambda)\kappa(\psi)|\mathbb{I}\rangle =\langle \mathcal{R}_\tau(\lambda)\kappa(\psi)|\mathbb{I}\rangle\langle A(\lambda)\mathbb{I}|\mathbb{I}\rangle.
    $$
    Hence, it follows that 
    $$
    \langle \mathcal{R}_\tau(\lambda)\kappa(\psi)|\mathbb{I}\rangle-\frac{k}{2}e^{-\lambda\tau}\langle \mathcal{R}_\tau(\lambda)\kappa(\psi)|\mathbb{I}\rangle\langle A(\lambda)\mathbb{I}|\mathbb{I}\rangle=\langle A(\lambda)\kappa(\psi)|\mathbb{I}\rangle
    $$
    and 
    \begin{equation}\label{eq:km_resolvent_simple_root}
    \begin{aligned}
    \langle \mathcal{R}_\tau(\lambda)\kappa(\psi)|\varphi\rangle&=\langle A(\lambda)\kappa(\psi)|\varphi\rangle+\frac{k}{2}e^{-\lambda\tau}\langle \mathcal{R}_\tau(\lambda)\kappa(\psi)|\mathbb{I}\rangle\langle A(\lambda)\mathbb{I}|\varphi\rangle\\
    &=\langle A(\lambda)\kappa(\psi)|\varphi\rangle+\frac{\langle A(\lambda)\mathbb{I}|\psi\rangle\langle A(\lambda)\mathbb{I}|\varphi\rangle}{\frac{2}{k}e^{-\lambda\tau}-\langle A(\lambda)\mathbb{I}|\mathbb{I}\rangle}.
    \end{aligned}
    \end{equation}
    It has been shown that when $h$ is sufficiently large, roots of $1-\frac{k}{2}e^{-\lambda\tau}\langle A(\lambda)\mathbb{I}|\mathbb{I}\rangle$(i.e. generalized eigenvalues) are all simple roots. For each generalized eigenvalue $\lambda$, there exists some generalized eigenfunction $\psi\in \text{Ran}(A(\lambda))\subset \mathrm{Exp}^\prime$ such that
    $$
    (\id-\frac{k}{2}e^{-\lambda\tau}A(\lambda)\mathcal{P}^\times)\psi=0.
    $$
    Equivalently, we have $\psi=\frac{k}{2}e^{-\lambda\tau}\langle \psi|\mathbb{I}\rangle \langle A(\lambda)\mathbb{I}|$. Hence, $\psi_p:=A(\lambda_p)\mathbb{I}$ is a generalized eigenfunction of $\lambda_p$. By substituting (\ref{eq:km_resolvent_simple_root}) into (\ref{eq:I_k}), we get
    \begin{equation}
        I_p=D_p \langle \psi_p|\psi\rangle\langle \psi_p|\varphi\rangle,
    \end{equation}
    where $D_p$ is the residue at $\lambda_p$ which is given by
    $$
    D_p=\lim_{\lambda\to\lambda_p}\frac{\lambda-\lambda_p}{\frac{2}{k}e^{\lambda\tau}-\langle A(\lambda)\mathbb{I}|\mathbb{I}\rangle}.
    $$
    It is easy to verify that 
    $$
    \lim_{p\to\infty}{\frac{|\Re(\lambda_p)|}{|\Im(\lambda_p)|}}=0.
    $$
    Hence, there exists some $\delta\in(0,\frac{\pi}{2})$ and $d<0$ such that a sector $\Gamma^\prime(l)=\Gamma_1(l)\bigcup\Gamma_2(l)$ where
    $$
    \begin{aligned}        &\Gamma_1(l)=\{z\in \mathbb{C}|z=d-re^{\pm i\delta},\ 0\leq r\leq l\},\\
        &\Gamma_2(l)=\{z\in \mathbb{C}|z=d-le^{ i\theta},\ -\delta\leq \theta\leq \delta\}\\
    \end{aligned}
    $$
    is defined on the left side of all generalized eigenvalues as $l\to\infty$. Such a sector is illustrated in Fig.~\ref{fig:4}.
    \begin{figure}
        \centering
        \includegraphics[width=0.5\linewidth]{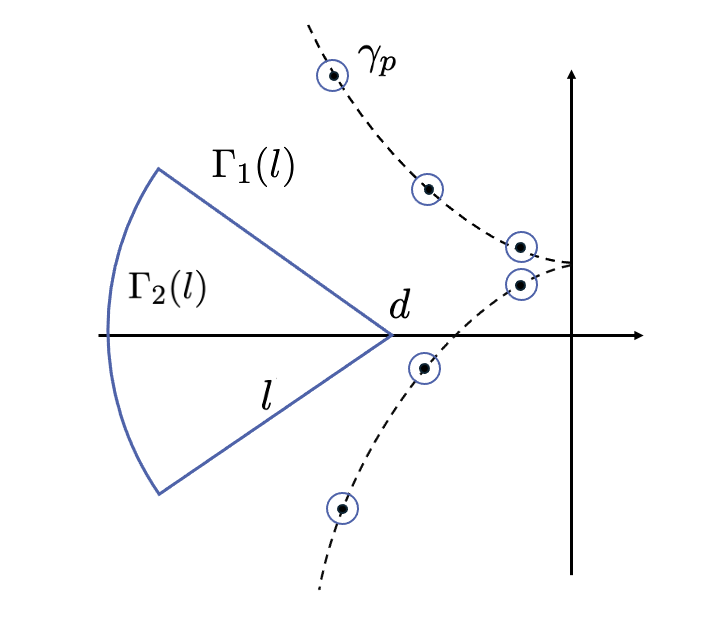}
        \caption{Since the continuous spectrum $\sigma_c(\mathcal{A})=i\mathbb{R}$ disappear in the generalized sense, the integral contour can be deformed to the left half-plane (i.e. $\Gamma_1(\infty)\cup\sum_{p\in \mathbb{N}}\gamma_p$)}.
        \label{fig:4}
    \end{figure}
    Now, we can obtain the following decomposition of integral
    $$
    \begin{aligned}
        \int_{\Gamma}e^{\lambda t}\langle \mathcal{R}_\tau(\lambda)\kappa(\psi)|\varphi\rangle\di \lambda&=(\int_{\Gamma_1}+\sum_{p\in\mathbb{N}}\int_{\gamma_p})e^{\lambda t}\langle \mathcal{R}_\tau(\lambda)\kappa(\psi)|\varphi\rangle\di\lambda\\
        &=\int_{\Gamma_1}e^{\lambda t}\langle \mathcal{R}_\tau(\lambda)\kappa(\psi)|\varphi\rangle\di\lambda+\sum_{p\in\mathbb{N}}D_p e^{\lambda_p t}\langle \psi_p|\psi\rangle\langle \psi_p|\varphi\rangle.
    \end{aligned}
    $$
    When $\Re(\lambda)<0$, we have  
    \begin{equation}\label{eq:jump}
        \langle A(\lambda)\kappa(\psi)|\varphi\rangle =((\lambda- i\mathcal{M})^{-1}\psi,\varphi^\ast)+2\pi\psi(\frac{\lambda}{ i})\varphi(\frac{\lambda}{ i})\delta_h(\frac{\lambda}{ i}).
    \end{equation}
    Since $\psi,\varphi\in \text{Exp}$, we can find positive constants $C_i$ and $\beta_i$ (i=1,2) such that
    $$
    |\psi(\lambda)|\leq C_1e^{\beta_1|\lambda|},\ |\varphi(\lambda)|\leq C_2e^{\beta_2|\lambda|}. 
    $$
    In this case, it follows from (\ref{eq:jump}) and (\ref{eq:km_resolvent_simple_root}) that
    $$
    \begin{aligned}
        \langle \mathcal{R}_\tau(\lambda)\kappa(\psi)|\varphi\rangle=\frac{1}{\frac{2}{k}e^{\lambda\tau}-\langle A(\lambda)\mathbb{I}|\mathbb{I}\rangle}M_\tau(\lambda),  
    \end{aligned}
    $$
    where 
    $$
    \begin{aligned}
        M_\tau(\lambda)&=\frac{2}{k}e^{\lambda\tau}((\lambda- i\mathcal{M})^{-1}\psi,\varphi^\ast)-((\lambda- i\mathcal{M})^{-1}\mathbb{I},\mathbb{I})((\lambda- i\mathcal{M})^{-1}\psi,\varphi^\ast)\\
        &\quad +((\lambda- i\mathcal{M})^{-1}\mathbb{I},\psi^\ast)((\lambda- i\mathcal{M})^{-1}\mathbb{I},\varphi^\ast)\\
        &\quad+2\pi \delta_h(\frac{\lambda}{ i})[\frac{2}{k}e^{\lambda\tau}\psi(\frac{\lambda}{ i})\varphi(\frac{\lambda}{ i})-((\lambda- i\mathcal{M})^{-1}\psi,\varphi^\ast)\\
       &\quad-\psi(\frac{\lambda}{ i})\varphi(\frac{\lambda}{ i})((\lambda- i\mathcal{M})^{-1}\mathbb{I},\mathbb{I})+ \psi(\frac{\lambda}{ i})((\lambda- i\mathcal{M})^{-1}\psi,\varphi^\ast)\\
        &\quad +\varphi(\frac{\lambda}{ i})((\lambda- i\mathcal{M})^{-1}\mathbb{I},\psi^\ast)].
    \end{aligned}
    $$
It is clear that when $\Re(\lambda)<0$ 
$$
||(\lambda- i\mathcal{M})^{-1}||=\text{dist}(\lambda, i\mathbb{R})^{-1}=\frac{1}{|\Re(\lambda)|}\to 0,
$$
as $|\Re(\lambda)|\to\infty$. Here $||\cdot||$ is the operator norm and $\text{dist}(\lambda,A)$ refers to the distance from $\lambda$ to the set $A$. 

Thus, for sufficiently large $|\Re(\lambda)|$, we can find positive constants $C_i$ (i=3,4,...,7) such that
$$
|M_\tau(\lambda)|\leq C_3+|\delta_h(\frac{\lambda}{ i})|(C_4+C_5 e^{\beta_1|\lambda|}+C_6 e^{\beta_2|\lambda|}+C_7 e^{(\beta_1+\beta_2)|\lambda|)}.
$$
Since 
$$|\frac{2}{k}e^{\lambda\tau}-\langle A(\lambda)\mathbb{I}|\mathbb{I}\rangle|=|\frac{2}{k}e^{\lambda\tau}-((\lambda- i\mathcal{M})^{-1}\mathbb{I},\mathbb{I})+\delta_h(\frac{\lambda}{ i})|\approx |\delta_h(\frac{\lambda}{ i})|$$ 
as $|\Re(\lambda)|\to \infty$, there exists $C_8>0$ such that
$$
\sup_{\lambda\in \Gamma_2(l)}|\langle \mathcal{R}_\tau(\lambda)\kappa(\psi)|\varphi\rangle|\leq C_8e^{(\beta_1+\beta_2)l}.
$$
Based on this estimate of $|\langle \mathcal{R}_\tau(\lambda)\kappa(\psi)|\varphi\rangle$, we can obtain 
$$
\begin{aligned}
    |\int_{\Gamma_2(l)}e^{\lambda t}\langle \mathcal{R}_\tau(\lambda)\kappa(\psi)|\varphi\rangle\di \lambda|&\leq \sup_{\lambda\in \Gamma_2(l)}|\langle \mathcal{R}_\tau(\lambda)\kappa(\psi)|\varphi\rangle| \int_{\Gamma_2(l)}|e^{\lambda t}|\di \lambda\\
    &\leq2C_8e^{(\beta_1+\beta_2)l+dt}\int_{0}^{\delta}le^{-lt\cos\theta}\di\theta\\
    &\leq2C_8 e^{(\beta_1+\beta_2)l+(d-l)t}\int_0^\delta le^{\frac{2}{\pi}lt\theta}\di\theta\\
    &=\frac{\pi}{t}C_8N(t,l)e^{dt},
\end{aligned}
$$
where
$$
N(t,l)=e^{l[\beta_1+\beta_2-(1-\frac{2}{\pi}\delta)t]}-e^{l(\beta_1+\beta_2-t)}.
$$
Then, for given $t>\frac{\pi}{\pi-2\delta}(\beta_1+\beta_2)$, we have
$$
\lim_{l\to\infty}\int_{\Gamma_2(l)}e^{\lambda t}\langle \mathcal{R}_\tau(\lambda)\kappa(\psi)|\varphi\rangle\di \lambda=0,
$$
and the spectral decomposition 
\begin{equation}\label{1}
    \int_{\Gamma}e^{\lambda t}\langle \mathcal{R}_\tau(\lambda)\kappa(\psi)|\varphi\rangle\di \lambda=\sum_{p\in\mathbb{N}}D_p e^{\lambda_p t}\langle \psi_k|\psi\rangle\langle \psi_k|\varphi\rangle
\end{equation}
holds. 

Put $f_\lambda:=\frac{k}{2}\mathcal{P}(\lambda-A_L)^{-1}f$ for $f\in C([-\tau,0];\text{Exp})$. We have
$$
|f_\lambda|=|\frac{k}{2}\int_{-\tau}^0e^{-\lambda(s+\tau)}\mathcal{P}(f(s))\di s|\leq M|\frac{e^{-\lambda \tau}-1}{\lambda}|,
$$
for some constant $M>0$. In particular, it is clear that because $\mathcal{P}$ is of finite rank, $f_\lambda\in \text{Exp}$ for any $\lambda\in \mathbb{C}$. Similar to the previous procedure, for any $\varphi\in \text{Exp}$ with $|\varphi(\lambda)|\leq E_1e^{\beta|\lambda|}$, we have
$$
    \begin{aligned}
        \langle \mathcal{R}_\tau(\lambda)\kappa(f_\lambda)|\varphi\rangle=\frac{1}{\frac{2}{k}e^{\lambda\tau}-\langle A(\lambda)\mathbb{I}|\mathbb{I}\rangle}\tilde{M}_\tau(\lambda),
    \end{aligned}
$$
and 
$$
\begin{aligned}
    |\tilde{M}_\tau(\lambda)|&\leq E_2+E_3e^{\tau|\lambda|}+|\delta_h(\frac{\lambda}{ i})|(E_4 e^{\tau|\lambda|}\\
    &\quad+E_5 e^{\beta|\lambda|}+E_6 e^{(\tau+\beta)|\lambda|}+E_7 e^{2\tau|\lambda|}),
\end{aligned}
$$
if $|\Re(\lambda)|$ is sufficiently large. The following estimate holds:
$$
\sup_{\lambda\in \Gamma_2(l)}|\langle \mathcal{R}_\tau(\lambda)\kappa(f_\lambda)|\varphi\rangle|\leq E_8 e^{\beta_3l},
$$
where $\beta_3:=\max\{2\tau,\tau+\beta\}$. Here $E_i$ ($i=1,2,...,8$), $\beta$ are positive constants. Likewise, it is easy to verify that there exists some $t_0$ such that if $t>t_0$, we can obtain 
$$
\lim_{l\to\infty}\int_{\Gamma_2(l)}e^{\lambda t}\langle \mathcal{R}_\tau(\lambda)\kappa(f_\lambda)|\varphi\rangle\di \lambda=0,
$$
and the following spectral decomposition holds:
\begin{equation}\label{2}
    \int_{\Gamma}e^{\lambda t}\langle \mathcal{R}_\tau(\lambda)\kappa(f_\lambda)|\varphi\rangle\di \lambda=\sum_{p\in\mathbb{N}}D_p e^{\lambda_p t}\langle \psi_k|f_{\lambda_p}\rangle\langle \psi_p|\varphi\rangle.
\end{equation}
When $u(t)$ solves (\ref{eq:km4}) under initial condition $(x,f)\in \mathrm{Exp}\times C([-\tau,0];\mathrm{Exp})$, I
it follows from (\ref{eq:km_complex_order_parameter}) that $r(t)=(u(t),\mathbb{I})$. We can get
\begin{equation}\label{eq:final1}
(u(t),\mathbb{I})=\frac{1}{2\pi  i}\lim_{b\to\infty}\int_{a- ib}^{a+ ib}e^{\lambda t}[(\Delta(\lambda)^{-1}x,\mathbb{I})+(\Delta(\lambda)^{-1}f_\lambda,\mathbb{I})]\di\lambda, 
\end{equation}
for some $a>0$. The entire integral contour lies on the right half-plane. It follows from Proposition \ref{pro:generalized_resolvent_continuation} that when $\Re(\lambda)>0$
$$
(\Delta(\lambda)^{-1}\psi,\varphi)=\langle \mathcal{R}_\tau(\lambda)\kappa(\psi),\varphi\rangle.
$$
Hence, (\ref{eq:final1}) can be rewritten as 
\begin{equation}
    (u(t),\mathbb{I})=\frac{1}{2\pi  i}\lim_{b\to\infty}\int_{a- ib}^{a+ ib}e^{\lambda t}[\langle\mathcal{R}_\tau(\lambda)\kappa(x),\mathbb{I}\rangle+\langle\mathcal{R}_\tau(\lambda)\kappa(f_\lambda),\mathbb{I}\rangle]\di\lambda.
\end{equation}
When $|k|<|k_c(\tau)|$ and $h$ is sufficiently large, by applying (\ref{1}) and (\ref{2}), we can obtain
$$
(u(t),\mathbb{I})=\sum_{p\in \mathbb{N}} D_p e^{\lambda_p t}\langle \psi_p|x+f_{\lambda_p}\rangle\langle \psi_p|\mathbb{I}\rangle,
$$
which converges to zero exponentially as $t\to\infty$. 
\end{proof}

\noindent\textbf{Acknowledgment.}
The author wishes to thank Hayato Chiba for fruitful discussions and valuable suggestions on the paper. 
The author H.S. acknowledges supports of JST Moonshot Research and Development Grant Number JPMJMS2023, Japan.



\begin{thebibliography}{GMS13}

\bibitem[BP01]{batkai1}
	\textsc{A. Bátkai, S. Piazzera}:
	\textit{Semigroups and linear partial differential equations with delay},
	J. Math. Anal. Appl, \textbf{264} (2001), 1--20.

\bibitem[BP05]{batkai2}
	\textsc{A. Bátkai, S. Piazzera}:
	\textit{Semigroups for delay equations},
	A K Peters/CRC Press, Cambridge, 2005.

\bibitem[B69]{bruyn}
	\textsc{G. F. C. de Bruyn}:
	\textit{The existence of continuous inverse operators under certain conditions},
	J. London Math. Soc., \textbf{44} (1969), 68--70.

\bibitem[CH89]{ch}
	\textsc{J. D. Crawford, P. D. Hislop}:
	\textit{Application of the method of spectral deformation to the Vlasov-Poisson system},
	Ann. Physics, \textbf{189} (1989), 265--317.

\bibitem[CGHJK96]{CGHJK}
	\textsc{R. M. Corless, G. H. Gonnet, D. E. G. Hare, D. J. Jeffery, D. E. Knuth}:
	\textit{On the Lambert W function},
	Adv Comput Math, \textbf{5} (1996), 329--359.

\bibitem[C15a]{chibaA}
	\textsc{H. Chiba}:
	\textit{A proof of the Kuramoto conjecture for a bifurcation structure of the infinite dimensional Kuramoto model},
	Ergo. Theo. Dyn. Syst, \textbf{35} (2015), 762--834.

\bibitem[C15b]{chibaB}
	\textsc{H. Chiba}:
	\textit{A spectral theory of linear operators on rigged Hilbert spaces under analyticity conditions},
	Adv. in Math, \textbf{273} (2015), 324--379.

\bibitem[CM21]{medvedev}
	\textsc{H. Chiba, G. Medvedev}:
	\textit{Stability and bifurcation of mixing mixing in the Kuramoto model with inertia},
	SIAM J. Math. Anal, \textbf{54} (2021), 1797--1819.

\bibitem[D95]{davies}
	\textsc{E. B. Davies}:
	\textit{Spectral theory and differential operators},
	Cambridge University Press, Cambridge, 1995.

\bibitem[DHJ21]{ha}
	\textsc{Z. Ding, S. Y. Ha, S. Jin}:
	\textit{A local sensitivity analysis in Landau damping for the kinetic Kuramoto equation with random inputs},
	Quart. Appl. Math, \textbf{79} (2021), 229--264.

\bibitem[E65]{Edw}
	\textsc{R. E. Edwards}:
	\textit{Functional analysis. Theory and applications},
	Holt, Rinehart and Winston, New York, 1965.

\bibitem[EN00]{nagel}
    \textsc{K. J. Engel, R. Nagel}:
	\textit{One-parameter semigroups for linear evolution equations},
	  Springer New York, NY, 2013.

\bibitem[FGG16]{fernandez}
	\textsc{B. Fernandez, D. Gérard-Varet, G. Giacomin}:
	\textit{Landau Damping in the Kuramoto model},
	Ann. Henri Poincaré, \textbf{17} (2016), 1793--1823.

\bibitem[GP11]{gp}
	\textsc{M. Gadella, G. P. Pronko}:
	\textit{The Friedrichs model and its use in resonance phenomena},
	Fortschr. Phys., \textbf{59} (2011), 795--859.

\bibitem[HMZ12]{hmz}
	\textsc{S. Hejazian, M. Mirzavaziri, O. Zabeti}:
	\textit{Bounded operators on topological vector spaces and their spectral radii},
	Filomat, \textbf{26} (2012), no. 6, 1283--1290.

\bibitem[HS96]{hislop}
	\textsc{P. D. Hislop, I. M. Sigal}:
	\textit{Introduction to spectral theory. With applications to Schrödinger operators},
	Springer-Verlag, New York, 1996.

\bibitem[K75]{kuramoto}
	\textsc{Y. Kuramoto}:
	in \textit{Proceedings of the international symposium on mathematical problems in theoretical physics},
	edited by H. Araki, Lecture Notes in Physics Vol. 39 (Springer, Berlin, 1975); \textit{Chemical oscillations, waves, and turbulence} (Springer, Berlin, 1984)

\bibitem[K95]{kato}
	\textsc{T. Kato}:
	\textit{Perturbation theory for linear operators},
	Springer-Verlag, Berlin, 1995.
  
\bibitem[M61]{maeda}
	\textsc{F. Maeda}:
	\textit{Remarks on spectra of operators on a locally convex space},
	Proc. Nat. Acad. Sci. U.S.A., \textbf{47} (1961).

\bibitem[N88]{nakagiri2}
	\textsc{S. Nakagiri}:
	\textit{Structural properties of functional differential equations in Banach spaces},
	Osaka J. Math, \textbf{25} (1988), 353--398.

\bibitem[N16]{nishi}
	\textsc{J. Nishiguchi}:
	\textit{On parameter dependence of exponential stability of equilibrium solutions in differential equations with a single constant delay},
	Discrete and Continuous Dynamical Systems, \textbf{36} (2016).

\bibitem[N24]{noll}
	\textsc{D. Noll}:
	\textit{Topological spaces satisfying a closed graph theorem},
	Topology and its Applications, \textbf{349} (2024).


\bibitem[R57]{ringrose}
	\textsc{J. R. Ringrose}:
	\textit{Precompact linear operators in locally convex spaces},
	Mathematical Proceedings of the Cambridge Philosophical Society, \textbf{53} (1957), 581--591.

\bibitem[RS78]{reed}
    \textsc{M. Reed, S. Barry}:
	\textit{Method of modern mathematical physics IV. Analysis of operators},
	  Academic Press, New York-London, 1978.

\bibitem[R80]{rauch}
	\textsc{J. Rauch}:
	\textit{Perturbation theory for eigenvalues and resonances of Schrödinger Hamiltonians},
	J. Funct. Anal., \textbf{35} (1980), no. 3, 304--315.

\bibitem[SM91]{sm}
	\textsc{S. H. Strogatz, R. E. Mirollo}:
	\textit{Stability of incoherence in a population of coupled oscillators},
	J Stat Phys, \textbf{63} (1991), 613--635.

\bibitem[SMM92]{smm}
	\textsc{S. H. Strogatz, R. E. Mirollo, P. C. Matthews}:
	\textit{Coupled nonlinear oscillators below the synchronization threshold: relaxation be generalized Landau damping},
	Phys. Rev. Lett, \textbf{68} (1992), no. 18, 2730--2733.

\bibitem[SW99]{SW}
    \textsc{H. H. Schaefer, M. P. Wolff}:
	\textit{Topological vector spaces},
	  Springer New York, 1999.

\bibitem[S00]{strogatza}
	\textsc{S. H. Strogatz}:
	\textit{From Kuramoto to Crawford: exploring the onset of synchronization in populations of coupled oscillators},
	Phys. D, \textbf{143} (2000), no. 1-4, 1--20.

\bibitem[T67]{Treves}
    \textsc{F. Trèves}:
	\textit{Topological vector spaces, distributions and kernels},
	  Academic Press, New York-London, 1967.

\bibitem[WD18]{wd}
	\textsc{H. Wu, M. Dhamala}:
	\textit{Dynamics of Kuramoto oscillators with time-delayed positive and negative couplings},
	Phys. Rev. E, \textbf{98} (2018), 032221.

\bibitem[W58]{wael}
    \textsc{L. Waelbroeck}:
	\textit{Locally convex algebras: spectral theory},
	  Seminar on complex analysis at Institute of Advanced Study, 1958.

\bibitem[YS99]{yeung}
	\textsc{M. K. S. Yeung, S. H. Strogatz}:
	\textit{Time delay in the Kuramoto model of coupled oscillators},
	Phys. Rev. Lett., \textbf{82} (1999), 648--651.

\bibitem[ZH98]{zumbrun}
	\textsc{K. Zumbrun, P. Howard}:
	\textit{Pointwise semigroup methods and stability of viscous shock waves},
	Indiana Univ. Math. J, \textbf{47} (1998), 741--871.

\end{thebibliography}
\end{document}